\tikzset{frontline/.style={preaction={draw=white,-,line width=6pt}},}  
\definecolor{references}{rgb}{0,0,1}
\newtheorem{thm}{Theorem}[section]
\newtheorem{theorem}[thm]{Theorem}
\newtheorem{lemma}[thm]{Lemma}
\newtheorem{proposition}[thm]{Proposition}
\newtheorem{corollary}[thm]{Corollary}
\newtheorem*{prop*}{Proposition}
\newtheorem*{lemma*}{Lemma}
\theoremstyle{definition}
\newtheorem{definition}[thm]{Definition}
\newtheorem{example}[thm]{Example}
\theoremstyle{remark}
\newtheorem{remark}[thm]{Remark}
\newtheorem{observation}[thm]{Observation}
\numberwithin{equation}{section}
\def\AB{{\mathbf A}}
\def\EB{{\mathbf E}}
\def\FB{{\mathbf F}}
\def\KC{{\mathcal{K}}}
\def\PB{{\mathbf P}}
\def\AS{{\EuScript A}}
\def\BS{{\EuScript B}}
\def\CS{{\EuScript C}}
\def\IS{{\EuScript I}}
\def\MS{{\EuScript M}}
\def\a{\alpha}
\def\b{\beta}
\def\d{\delta}
\def\e{\varepsilon}
\let\phi=\varphi
\def\C{{\mathbbm C}}
\def\R{{\mathbbm R}}
\def\Z{{\mathbbm Z}}
\def\1{\mathbbm{1}}
\newcommand{\one}{\1}
\renewcommand{\sl}{\mathfrak{sl}}
\newcommand{\TL}{\operatorname{TL}}
\renewcommand{\k}{\mathbbm{k}}
\newcommand{\smMatrix}[1]{\left[\begin{smallmatrix}#1\end{smallmatrix}\right]}
\newcommand{\sqmatrix}[1]{\left[\begin{matrix} #1\end{matrix}\right]}
\newcommand{\im}{\operatorname{im}}
\newcommand{\Kar}{\textbf{Kar}}
\newcommand{\Hom}{\operatorname{Hom}}
\newcommand{\End}{\operatorname{End}}
\newcommand{\Id}{\operatorname{id}}
\newcommand{\inv}{^{-1}}
\newcommand{\Ch}{\operatorname{Ch}}
\newcommand{\Cone}{\operatorname{Cone}}
\newcommand{\FT}{\operatorname{FT}}
\newcommand{\TLC}{\mathcal{T}\!\mathcal{L}}
\newcommand{\Bim}{{\rm Bim }}
\newcommand{\SBim}{\mathbb{S}\Bim}
\renewcommand{\Id}{\operatorname{id}}
\renewcommand{\Kar}{\operatorname{Kar}}
\begin{document}

\newcommand{\Sq}{\operatorname{Sq}}
\newcommand{\summand}{\buildrel\oplus\over\subset}
\newcommand{\prefix}{pre}

\newcommand{\tw}{\operatorname{tw}}

\title{Constructing categorical idempotents}

\begin{abstract}We give a general construction of categorical idempotents which recovers the categorified Jones-Wenzl projectors, categorified Young symmetrizers, and other constructions as special cases.  The construction is intimately tied to cell theory in the sense of additive monoidal categories.\end{abstract}

\author{Matthew Hogancamp} \address{Northeastern University, Boston}

\maketitle

\setcounter{tocdepth}{1}
\tableofcontents

\section{Introduction}
\label{sec:intro}
In recent years, categorical idempotents have become important in many aspects of higher representation theory and homology theories in low-dimensional topology. For instance, the categorified Jones-Wenzl projectors and their relatives \cite{CK12a,Roz10a} allow one to categorify Reshetikhin-Turaev invariants of colored links and tangles in $\R^3$ and $\R^2\times[0,1]$.  More recently, the categorified Young idempotents of the author and Ben Elias \cite{ElHogFTdiag-pp} (see also \cite{HogSym-GT,AbHog17}) have become  important components of a conjectural relation between Soergel bimodules in type $A$ and Hilbert schemes of points in $\C^2$ \cite{GNR16}.

Many modern constructions produce categorical idempotents by starting with a monoidal category $\AS$ and a finite complex $\FB\in \KC^b(\AS)$ and showing that tensor powers of $\FB$ stabilize.  In many cases the resulting stable limit $\FB^{\otimes \infty}$ is an idempotent complex.  This idea was first proposed and carried out by Rozansky \cite{Roz10a} in his construction of the categorified Jones-Wenzl idempotent, where $\FB=\FT_n$ is the Khovanov complex associated to the full-twist braid.  

This paper began with the observation that in all examples of idempotents constructed as infinite powers (all those known to the author anyway), the potentially very complicated complex $\FB$ can be replaced by a much simpler complex of the form $(C\rightarrow \one)$, supported in only two homological degrees.  Later we realized that such constructions can be expressed equivalently by formulas which are very reminiscent of the usual bar construction in homological algebra (more specifically \emph{triple cohomology}; see \S \ref{ss:remarks}).


Using the techniques in this paper one can reconstruct with ease a variety of categorical idempotents appearing already in the literature \cite{
CK12a,Roz10a,Roz10b,CH12,Rose12,CauClasp,HogSym-GT,AbHog17,CautisRemarks-pp,LibWilIdemp-pp,ElHogFTdiag-pp}.  
\begin{remark}
One should compare our construction of categorical idempotents to the fact that projective resolutions can be summoned into existence with only a simple incantation (``let $\PB$ be...''), but to say or compute anything useful about them is often quite challenging.
\end{remark}

\subsection{Summary}
\label{ss:summary}

Throughout the paper $\k$ denotes a fixed commutative ring and $\AS$ denotes an additive $\k$-linear monoidal category.  The tensor product in $\AS$ is denoted by $\star$, and the monoidal identity is denoted by $\one$.

Section \S \ref{s:cells and counits} concerns some basic notions in additive monoidal categories. First, in \S \ref{ss:cells and ideals} we recall partially ordered sets of (left, right, or two-sided) cells in $\AS$.  


  In \S \ref{ss:counits} we introduce the notion of a \emph{counital object} in $\AS$ (nonstandard terminology), that is to say an object $C\in \AS$ equipped with a morphism $\e:C\rightarrow \one$ (the \emph{counit}) such that $\e\star \Id_C$ and $\Id_C\star \e$ admit right inverses $\Delta_L,\Delta_R:C\rightarrow C\star C$.   Every coalgebra object is counital, and the direct sum of counital objects has the structure of a counital object.    The set of counital objects admits a transitive reflexive relation by declaring $(C_1,\e_1)\leq (C_2,\e_2)$ if there is a morphism $\nu:C_1\rightarrow C_2$ such that $\e_1 = \e_2\circ \nu$.  This relation is equivalent to the left and right cell orders (Lemma \ref{lemma:counitorder}):
\[
(C_1,\e_1)\leq (C_2,\e_2)  \ \ \ \Leftrightarrow \ \ \ C_1\leq_L C_2 \ \ \ \Leftrightarrow\ \ \ C_1\leq_R C_2.
\]
Section \S \ref{s:construction} contains our main techniques for constructing idempotent complexes over $\AS$.  In \S \ref{ss:cxs} we recall some basic notions concerning categories of complexes, mostly for the purpose of setting up notation.  In \S \ref{ss:idemp alg and coalg} we discuss the theory of idempotent algebras and coalgebras. In \S \ref{ss:the construction} we define a pair of complexes $\PB_C,\AB_C\in \Ch^-(\AS)$ associated to any counital object $(C,\e)$ in $\AS$, by the formulas
\[
\PB_C \ \ =  \ \  \cdots \rightarrow C^{\star 3} \rightarrow C^{\star 2}\rightarrow \underline{C},
\]
\[
\AB_C \ \ =  \ \  \cdots \rightarrow C^{\star 3} \rightarrow C^{\star 2}\rightarrow {C}\rightarrow \underline{\one},
\]
in which the differential $C^{\star n+1}\rightarrow C^{\star n}$ in each is an alternating sum of morphisms $\Id^{\star i}\star \e \star \Id^{\star n-i}$.  Our main theorem (Theorem \ref{thmA}) asserts that $\PB_C$ and $\AB_C$ are idempotent up to homotopy with respect to $\star$, and gives a unique characterization of $\PB_C$, $\AB_C$ in terms of $C$.  The reason for the notation $\PB_C,\AB_C$ is explained in Remark \ref{rmk:P and A}.

\begin{remark}
Complexes of the form $\PB_C$ are idempotent coalgebras in the homotopy category $\KC^-(\AS)$, while complexes of the form $\AB_C$ are idempotent algebras in $\KC^-(\AS)$, and there is an ``idempotent decomposition'' $\one\simeq (\AB_C\rightarrow \PB_C)$ (notation explained in the remarks following Definition \ref{def:twist}).  More generally idempotent complexes of the form $\PB_C$ and $\AB_C$ tend to occur as maximal and minimal terms in idempotent decompositions of $\one$, respectively (see Theorem \ref{thm:TL P and A} for an example).
\end{remark}
\begin{remark}
The two-sided bar complex of an algebra occurs as a special case of this construction; see Example \ref{ex:bimod}.
\end{remark}
\begin{remark}\label{rmk:intro proj}
Categorified Jones-Wenzl idempotents \cite{CK12a,Roz10a} (and their relatives) occur as special cases of $\AB_C$, while projectors such as Rozansky's ``minimal projectors'' \cite{Roz10b} and also the projectors from \cite{AbHog17} occur as special cases of $\PB_C$.
\end{remark}

If $C_1\leq C_2$ then there is a \emph{relative idempotent}  $\PB_{C_2/C_1}$ satisfying
\[
\PB_{C_2}\star \AB_{C_1} \  \ \simeq \ \ \PB_{C_2/C_1} \ \ \simeq \ \ \AB_{C_1}\star \PB_{C_2},
\]
introduced and studied in \S \ref{ss:partial order}.  Theorem \ref{thm:rel idemp thm} establishes the basic properties of these relative idempotents.

\begin{remark}
The categorified Temperley-Lieb idempotents \cite{CH12} occur as special cases of $\PB_{C_2/C_1}$; these include the idempotents from Remark \ref{rmk:intro proj} above as special cases. See \S \ref{ss:cat TL} for more.
\end{remark}

\begin{remark}
The complexes  $\PB_{C_2/C_1}\simeq \PB_{C_2}\star \AB_{C_1}$ are neither idempotent algebras nor coalgebras; such complexes typically arise as non-extremal terms in idempotent decompositions of $\one$ (see Theorem \ref{thm:TL P and A} for an example).
\end{remark}

Section \S \ref{s:normalized} contains a normalized version of the complexes $\PB_C,\AB_C$, in which a large contractible summand has been eliminated.  This construction is akin to the usual normalized two-sided bar complex.  First, in \S \ref{ss:en} we construct a distinguished idempotent endomorphism $e_n$ of $C^{\star n}$, for any counital object $(C,\e)$.  In \S \ref{ss:normalized idempts} we show how $C^{\star n}$ decomposes as a direct sum of images of $e_k$ for various $k\in \{1,\ldots,n\}$.   Nearly all of these summands cancel in $\PB_C$ and $\AB_C$ (Theorem \ref{thm:normalized}), and we find that
\[
\PB_C \ \ \simeq  \ \  \cdots \rightarrow \im e_3  \rightarrow \im e_2 \rightarrow \im e_1,
\]
\[
\AB_C \ \ \simeq  \ \  \cdots \rightarrow \im e_3  \rightarrow \im e_2 \rightarrow \im e_1\rightarrow \underline{\one}.
\]
Strictly speaking the above homotopy equivalences are to be interpreted in the \emph{Karoubi envelope} of $\AS$, in which we have adjoined the images of all idempotent endomorphisms in $\AS$.

In \S \ref{ss:infinite powers} we show (Theorem \ref{thm:powers}) that $\AB_C$ is homotopy equivalent to the ``infinite tensor power'' $\Cone(C\rightarrow \one)^{\star \infty}$.

\begin{remark}\label{rmk:milnor construction}
The fact that $\AB_C$ can be expressed as the ``infinite power'' of $(C\rightarrow \underline{\one})$ was inspired Milnor's ``infinite join'' construction of $EG$ of a topological group $G$ \cite{MilnorUnivBun1}. 
\end{remark}

Finally, \S \ref{s:examples} discusses some examples of using our construction in practice.  In \S \ref{ss:recognition} we discuss how one recognizes complexes of the form $\PB_C$ and $\AB_C$, and \S \ref{ss:cat TL} shows how, using these ideas, one can recover all the categorified Temperley-Lieb idempotents from \cite{CH12}; see Theorem \ref{thm:TL P and A}.

\subsection{Some remarks on the construction}
\label{ss:remarks}

There is a dual version of this story, in which the notion of counital object is dualized, obtaining the notion of unital objects $(A,\eta:\one\rightarrow A)$.  The resulting theory produces a pair of complementary idempotent complexes in $\Ch^+(\AS)$.  We ignore this dual picture entirely, as it can be obtained  from that presented here by passing to the opposite category $\AS^{\text{op}}$.

Any coalgebra object $C\in \AS$ gives rise to a(n augmented) simplicial object in $\AS$.  The complex $\AB_C$ is just the complex corresponding to the augmented simplicial object $C$ under the Dold-Kan correspondence (and $\PB_C$ is the complex corresponding to the associated simplicial object, forgetting the augmentation).  Moreover, our complexes are very closely related to the notion of \emph{triple cohomology}  \cite{Beck-thesis,BarrBeck-HSC}.  Note, however, that our complexes $\PB_C,\AB_C$ are defined even when $C$ is not a coalgebra.  At the same time, triple cohomology is defined for coalgebra objects in categories which are not necessarily linear; so the theory of triple cohomology cannot be expressed in the language of the projectors $\PB_C,\AB_C$ in general.

\begin{remark}
To really make the connection with triple cohomology one should fix a $\k$-linear category $\MS$ on which $\AS$ acts, say, on the right.  If $C\in \AS$ is a coalgebra object then $-\star C$ is a comonad acting on $\MS$, and the cohomology of $C$ with coefficients in $X\in \MS$ can be defined as the homology of the hom complex $\Hom_{\Ch(\MS)}(X\star \PB_C, X)$ with algebra structure inherited from a homotopy equivalence
\[
\Hom_{\Ch(\MS)}(X\star \PB_C, X) \ \simeq \ \End_{\Ch(\MS)}(X\star \PB_C).
\]
We omit the details, as our focus is on the complexes $\PB_C$ themselves, and not on the cohomology theories they represent.  
\end{remark}

\subsection*{Acknowledgements}
The author was supported by NSF grant DMS 1702274.  We thank E.~Gorsky and V.~Mazorchuk for comments on an earlier version.

\section{Cells and counital objects}
\label{s:cells and counits}

\subsubsection{Notation}
Throughout the paper $\k$ denotes a commutative ring and $\AS$ denotes a chosen $\k$-linear, additive, monoidal category.  The tensor product in $\AS$ will be denoted $\star$, and the monoidal identity in $\AS$ will be denoted $\one\in \AS$.  


\subsection{Cells in a monoidal category}
\label{ss:cells and ideals}
Given objects $X,Y\in \AS$ we say that $X$ is a \emph{retract} of $Y$ if $\Id_X$ factors through $\Id_Y$, i.e.~we can write
\[
\Id_X = \pi\circ \Id_Y\circ \sigma
\]
for some morphisms $X\buildrel \sigma\over\rightarrow Y\buildrel\pi\over \rightarrow X$.  In this case $\pi$ is called a \emph{retraction} and $\sigma$ its \emph{section}.  An object $Y$ is \emph{indecomposable} in $\AS$ if every retraction $Y\rightarrow X$ is an isomorphism.
\begin{remark}
This definition of indecomposability is only sensible when $\AS$ is idempotent complete.  A more reasonable definition of indecomposability in general would be, say, $\End_\AS(Y)$ has no nontrivial idempotents.
\end{remark}

Following \cite[\S 4.1]{mazorchuk_miemietz_2011} we write $B_1\leq_L B_2$ (respectively $B_1\leq_R B_2$) if $B_1$ is a retract of $X\star B_2$ (respectively $B_2\star X$) for some $X\in \AS$.  We write $B_1\leq_{LR} B_2$ if $B_1$ is a retract of $X\star B_2\star Y$ for some $X,Y\in \AS$.   The relations $\leq_L$, $\leq_R$, $\leq_{LR}$ are referred to as the left, right, and two-sided cell orders.  Each of these relations is reflexive and transitive, but not anti-symmetric.  We write $B_1\sim_L B_2$ if $B_1\leq_L B_2$ and $B_2\leq_L B_1$ (and similarly for $\sim_R$ and $\sim_{LR}$). Then $\sim_L$, $\sim_R$, and $\sim_{LR}$ are equivalence relations on the set of objects of $\AS$.

A \emph{left cell} (resp.~\emph{right cell}, resp.~\emph{two-sided cell}) in $\AS$ is by definition an equivalence class with respect to $\sim_L$ (resp.~$\sim_R$, resp.~$\sim_{LR}$) generated by an indecomposable object.

A \emph{left tensor ideal} in $\AS$ is a full subcategory $\IS\subset \AS$ with the property that $X\in \AS$ and $B\in \IS$ implies $X\star B\in \IS$.   The notions of \emph{right} and \emph{two-sided} \emph{tensor ideal} are defined similarly.  If $B\in \AS$ then we let $X\star\AS\subset \AS$ denote the right tensor ideal generated by $B$, i.e.~the full subcategory consisting of objects of the form $B\star X$ with $X\in \AS$.  Expressions such as  $\AS \star X$, $\AS\star X\star \AS$, and so on, are defined similarly.

A full subcategory $\BS\subset \AS$ will be called \emph{thick} if it is closed under taking retracts (that is to say, if $Y\in \BS$ and $\Id_X=\pi\circ \Id_Y\circ \sigma$ then $X\in \BS$).   If $\BS\subset \AS$ is a full subcategory, then we let $\overline{\BS}\subset \AS$ denote the smallest thick full subcategory which contains $\BS$.

Note that $B_1\leq_L B_2$ if and only if $\overline{\AS\star B_1}\subset \overline{\AS\star B_2}$, and similarly for $\leq_R$ and $\leq_{LR}$.  In this way, the cell theory of $\AS$ is essentially the study of thick tensor ideals in $\AS$.

\subsection{Counital objects}
\label{ss:counits}
\begin{definition}\label{def:counit}
Let $C\in \AS$ be an object equipped with a morphism $\e:C\rightarrow \one$.  We say that $\e$ is a \emph{counit} if $\e\star \Id_C$ and $\Id_C\star \e$ admit right inverses.   In this case the pair $(C,\e)$ will be called a \emph{counital object}.
\end{definition}


\begin{definition}\label{def:counit order}
Define a transitive reflexive relation $\leq$ on the set of counital objects by declaring $(C_1,\e_1)\leq (C_2, \e_2)$ if $\e_1$ factors through $\e_2$, i.e.~there exists $\nu:C_1\rightarrow C_2$ such that $\e_1 = \e_2\circ \nu$.  We write $C_1\sim C_2$ if $C_1\leq C_2$ and $C_2\leq C_1$.
\end{definition}

\begin{remark}
The monoidal identity $\one\in \AS$ is the unique maximum with respect to $\leq$, and $0\in \AS$ is the unique minimum.
\end{remark}

\begin{definition}\label{def:Cproj}
Let $(C,\e)$ be a counital object in $\AS$.  An object $X\in \AS$ is said to be \emph{right $C$-projective} if there exists a morphism $a:X\rightarrow X\star C$ such that the composition
\[
\begin{tikzpicture}
\tikzstyle{every node}=[font=\small]
\node (a) at (0,0) {$X$};
\node (b) at (2,0) {$X\star C$};
\node (c) at (5,0) {$X\star \one$};
\node (d) at (7,0) {$X$};
\path[->,>=stealth',shorten >=1pt,auto,node distance=1.8cm]
(a) edge node {$a$} (b)
(b) edge node {$\Id_X\star \e$} (c)
(c) edge node {$\cong$} (d);
\end{tikzpicture}
\]
equals $\Id_X$.  \emph{Left $C$-projective objects} are defined similarly.
\end{definition}
Note that right $C$-projective objects form a left tensor ideal in $\AS$.

\begin{lemma}\label{lemma:X leq C}
Let $(C,\e)$ be a counital object.  Then $X\in \AS$ is right (resp.~left) $C$-projective if and ony if $X\leq_L C$ (resp.~$X\leq_R C$).
\end{lemma}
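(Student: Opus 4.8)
The plan is to read off both implications directly from the definitions; I would only write out the ``right'' case in detail, since the ``left'' case is its mirror image (reflect every composite left-to-right, replacing $\Delta_R$ by $\Delta_L$; equivalently pass to $\AS^{\mathrm{op}}$). For the forward direction, suppose $X$ is right $C$-projective, witnessed by $a\colon X\to X\star C$ with $(\Id_X\star\e)\circ a=\Id_X$ under the unit isomorphism $X\star\one\cong X$. That single equation already exhibits $\Id_X$ as factoring through $X\star C$, with section $a$ and retraction $\Id_X\star\e$ (post-composed with the unit iso); hence $X$ is a retract of $X\star C$, and taking the auxiliary object in the definition of $\leq_L$ to be $X$ itself gives $X\leq_L C$.

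For the reverse direction, suppose $X\leq_L C$ and fix $Y\in\AS$ together with $\sigma\colon X\to Y\star C$ and $\pi\colon Y\star C\to X$ with $\pi\circ\sigma=\Id_X$. Since $\e$ is a counit, $\Id_C\star\e$ admits a right inverse $\Delta_R\colon C\to C\star C$, i.e.\ $(\Id_C\star\e)\circ\Delta_R=\Id_C$. I would then set
\[
a\ :=\ (\pi\star\Id_C)\circ(\Id_Y\star\Delta_R)\circ\sigma\ \colon\ X\longrightarrow X\star C,
\]
and check that $(\Id_X\star\e)\circ a=\Id_X$. This is a routine computation with bifunctoriality of $\star$: rewrite $(\Id_X\star\e)\circ(\pi\star\Id_C)$ as $(\pi\star\Id_\one)\circ(\Id_{Y\star C}\star\e)$, observe $(\Id_{Y\star C}\star\e)\circ(\Id_Y\star\Delta_R)=\Id_Y\star\bigl((\Id_C\star\e)\circ\Delta_R\bigr)=\Id_{Y\star C}$, and conclude that the whole composite collapses to $\pi\circ\sigma=\Id_X$. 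Thus $X$ is right $C$-projective, which closes the equivalence.

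The conceptual content is simply that the counit axioms let one ``graft'' an extra tensor factor of $C$ onto any object via $\Delta_R$ (resp.\ $\Delta_L$), and once that map is available both implications are essentially forced. The left-handed statement follows by the symmetric argument, now using the right inverse $\Delta_L$ of $\e\star\Id_C$ and the map $a=(\Id_C\star\pi)\circ(\Delta_L\star\Id_Y)\circ\sigma$. I do not expect a genuine obstacle here; the only thing that needs care is the bookkeeping with the unit and associativity isomorphisms suppressed in the string of identities above.
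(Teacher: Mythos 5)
Your proof is correct and follows essentially the same route as the paper: the forward direction reads the retract structure directly off the defining equation, and the reverse direction constructs exactly the same map $a=(\pi\star\Id_C)\circ(\Id_Y\star\Delta_R)\circ\sigma$ that the paper uses. The only difference is that you spell out the bifunctoriality computation the paper leaves implicit, which is fine.
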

\begin{proof}
Clearly if $X$ is right $C$-projective then $\Id_X$ factors through $\Id_{X\star C}$, so $X\leq_L C$.

Conversely, suppose that $X\leq_L C$, so that there exist an object $Y\in \AS$ and maps $X \buildrel\sigma\over \rightarrow Y\star C \buildrel \pi\over\rightarrow X$ such that $\pi\circ \sigma  = \Id_X$.   Define $a:X\rightarrow X\star C$ to be the composition
\[
\begin{tikzpicture}
\tikzstyle{every node}=[font=\small]
\node (a) at (-.2,0) {$X$};
\node (b) at (2,0) {$Y\star C$};
\node (c) at (5,0) {$Y\star C\star C$};
\node (d) at (8,0) {$X\star C$};
\path[->,>=stealth',shorten >=1pt,auto,node distance=1.8cm]
(a) edge node {$\sigma$} (b)
(b) edge node {$\Id_Y\star \Delta_R$} (c)
(c) edge node {$\pi\star \Id_C$} (d);
\end{tikzpicture}
\]
where $\Delta_R:C\rightarrow C\star C$ is a right inverse to $\Id_C\star \e:C\star C\rightarrow C\star \one\cong C$.  Then $a$ satisfies the condition of Definition \ref{def:Cproj}, so $X$ is right $C$-projective.
\end{proof}

\begin{lemma}\label{lemma:counitorder}
Let $(C_i,\e_i)$, $i=1,2$, be counital objects in $\AS$.  The following are equivalent:
\begin{enumerate}
\item $C_1\leq_L C_2$.
\item $C_1\leq_R C_2$.
\item $C_1\leq C_2$ in the sense of Definition \ref{def:counit order}.
\end{enumerate}
\end{lemma}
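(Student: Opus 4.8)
The plan is to deduce everything from Lemma \ref{lemma:X leq C}, which identifies conditions (1) and (2) with the statements that $C_1$ is right $C_2$-projective, respectively left $C_2$-projective. Given that, it suffices to prove the two equivalences ``$C_1$ right $C_2$-projective'' $\iff$ (3) and ``$C_1$ left $C_2$-projective'' $\iff$ (3). But these are interchanged by passing to the monoidal category $\AS^{\mathrm{rev}}$ with the order of $\star$ reversed: this operation sends counital objects to counital objects (a right inverse of $\e\star\Id_C$ becomes one for $\Id_C\star\e$, and conversely), does not affect the relation $\leq$ of Definition \ref{def:counit order} (which only refers to composition), and swaps $\leq_L$ with $\leq_R$. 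So I would prove only the equivalence $C_1\leq_L C_2 \iff (3)$, and note that the second follows by this symmetry (equivalently, by repeating the argument with left and right interchanged throughout).

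For (3) $\Rightarrow$ (1): given $\nu\colon C_1\to C_2$ with $\e_1=\e_2\circ\nu$, use that $(C_1,\e_1)$ is counital to choose a right inverse $\Delta_R\colon C_1\to C_1\star C_1$ of $\Id_{C_1}\star\e_1$, and set $a:=(\Id_{C_1}\star\nu)\circ\Delta_R\colon C_1\to C_1\star C_2$. Functoriality of $\star$ gives $(\Id_{C_1}\star\e_2)\circ a = (\Id_{C_1}\star\e_1)\circ\Delta_R=\Id_{C_1}$ (after the unit identification $C_1\star\one\cong C_1$), so $a$ exhibits $C_1$ as right $C_2$-projective, and Lemma \ref{lemma:X leq C} yields $C_1\leq_L C_2$.

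For (1) $\Rightarrow$ (3): Lemma \ref{lemma:X leq C} produces $a\colon C_1\to C_1\star C_2$ with $(\Id_{C_1}\star\e_2)\circ a$ equal to the unitor $C_1\cong C_1\star\one$, and I would take $\nu$ to be the composite
\[
C_1 \ \xrightarrow{\ a\ }\ C_1\star C_2 \ \xrightarrow{\ \e_1\star\Id_{C_2}\ }\ \one\star C_2 \ \cong\ C_2 .
\]
The only point needing care is the identity $\e_2\circ\nu=\e_1$: it follows by applying $\e_2$, invoking the interchange law to rewrite $(\Id_\one\star\e_2)\circ(\e_1\star\Id_{C_2})=\e_1\star\e_2=(\e_1\star\Id_\one)\circ(\Id_{C_1}\star\e_2)$, substituting $(\Id_{C_1}\star\e_2)\circ a=\Id_{C_1}$, and using naturality of the unitors. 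In the writeup I would suppress the coherence isomorphisms, or give a one-line string-diagram computation. This proves (1) $\iff$ (3), and hence, with the symmetry above, all three equivalences.

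The argument is purely formal, built only from the definition of counital object together with Lemma \ref{lemma:X leq C}. I expect no genuine obstacle; the one fussy point is the bookkeeping with unitors and the interchange law in verifying $\e_2\circ\nu=\e_1$, which becomes transparent in string-diagram notation.
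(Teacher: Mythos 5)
Your proof is correct and follows essentially the same route as the paper's: both directions use Lemma \ref{lemma:X leq C}, with the same map $\nu=(\e_1\star\Id_{C_2})\circ a$ for (1)$\Rightarrow$(3) and the same right inverse $(\Id_{C_1}\star\nu)\circ\Delta_R$ for (3)$\Rightarrow$(1), and the paper likewise disposes of (2)$\Leftrightarrow$(3) by the left/right symmetry you make explicit via $\AS^{\mathrm{rev}}$.
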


Consequently the partial order $C_1\leq C_2$ from Definition \ref{def:counit order} depends only on the thick left (or right) tensor ideals generated by the objects $C_i$.  In particular the counits $\e_i$ are irrelevant. 
\begin{proof}
Throughout the proof we will omit all occurrences of the unitor isomorphisms $\one\star Y\cong Y\cong Y\star \one$.

(1) $\Rightarrow$ (3).  Assume that $C_1\leq_L C_2$.  Then by Lemma \ref{lemma:X leq C} we can find a morphism $f:C_1\rightarrow C_1\star C_2$ such that $(\Id_{C_1}\star \e_2)\circ f = \Id_{C_1}$.  Define
\[
\nu:= (\e_1\star \Id_{C_2})\circ f : C_1  \rightarrow C_2
\]
This $\nu$ satisfies $\e_2\circ \nu = \e_1$, hence (1) implies (3).

(3) $\Rightarrow$ (1).  Suppose that $\nu: C_1\rightarrow C_2$ is such that  $\e_1 = \e_2\circ \nu$.  It is an easy exercise to verify that
\[
\Id_{C_1} = (\Id_{C_1}\star \e_1)\circ f = (\Id_{C_1}\star \e_2)\circ (\Id_{C_1}\star \nu) \circ f,
\]
so $\Id_{C_1}\star \e_2:C_1\star C_2 \rightarrow  C_1$ has a right inverse given by $(\Id_{C_1}\star \nu) \circ f$, which proves that $C_1\leq_L C_2$.  This gives the equivalence (1) $\Leftrightarrow$ (3).  A similar argument proves (2) $\Leftrightarrow$ (3).
\end{proof}

\begin{remark}
Note that $(C,\e_1)\sim (C,\e_2)$ whenever $e_1,\e_2$ are counits with the same underlying object.  Thus the equivalence class of $(C,\e)$ depends only on $C$, and not on $\e$.
\end{remark}

\section{Constructing idempotents}
\label{s:construction}
This section introduces our main techniques for constructing categorical idempotents (that is to say, complexes over $\AS$ which are idempotent with respect to $\star$, up to homotopy).  We begin by setting up some notation.

\subsection{Complexes}
\label{ss:cxs}
If $\AS$ is a $\k$-linear category, then we let $\Ch(\AS)$ denote the dg category of complexes over $\AS$.  Our convention for complexes is such that differentials increase degree by 1, as in
\[
\cdots \buildrel \d\over \rightarrow X^k \buildrel \d\over \rightarrow X^{k+1} \buildrel \d\over \rightarrow \cdots.
\]
Typically the differential of a complex is regarded as implicit, and we often write a complex $(X,\d_X)$ simply as $X$.  Objects of $\AS$ will be regarded as complexes in degree zero.
\begin{remark}
We use the letters $X,Y,Z$ to denote objects of $\Ch(\AS)$.  If we need to examine the chain groups of a complex, we will use the notation $X^k$ ($k\in \Z$).   The notation $\d_X$ will always mean the differential of $X$.

Since objects of $\AS$ are regarded as a special kinds of complexes, the same letters $X,Y,Z$ may also be used to denote objects of $\AS$.
\end{remark}

Morphism spaces in $\Ch(\AS)$ are the complexes
\[
\Hom_{\Ch(\AS)}^l(X,Y) \ := \ \prod_{i\in \Z} \Hom_\AS(X^i, Y^{i+l})
\]
with differential given by the super-commutator
\[
d_{\Ch(\AS)}(f) \ := \ \d_Y\circ f - (-1)^{|f|} f\circ \d_X
\]
where $\d_X$ and $\d_Y$ denote the differentials of $X,Y$, and $|f|\in \Z$ denotes the degree of a homogeneous morphism $f\in \Hom_{\Ch(\AS)}(X,Y)$.

A morphism $f\in \Hom_{\Ch(\AS)}(X,Y)$ is \emph{closed} if $d_{\Ch(\AS)}(f)=0$, and two closed morphisms $f,g \in \Hom_{\Ch(\AS)}^l(X,Y)$ are \emph{homotopic}, written $f\simeq g$, if $f-g=d_{\Ch(\AS)}(h)$ for some $h \in \Hom_{\Ch(\AS)}^{l-1}(X,Y)$.  A complex $X$ \emph{contractible} if $\Id_X\simeq 0$, and more generally complexes $X,Y$ are \emph{homotopy equivalent}, written $X\simeq Y$, if there exist closed morphisms $f:X\leftrightarrow Y: g$ such that $\Id_X-g\circ f\simeq 0$ and $\Id_Y- f\circ g\simeq 0$.

We let $\KC(\AS)=H^0(\Ch(\AS))$ denote the cohomology category of $\Ch(\AS)$, also known as the \emph{homotopy category of complexes}; objects of this category are complexes, and morphisms are degree zero closed morphisms modulo those which are homotopic to zero.  Superscripts $+,-,b$ denote full subcategories of complexes which are bounded below, bounded above, respectively bounded.   For instance $\Ch^-(\AS)\subset \Ch(\AS)$ denotes the full dg subcategory consisting of complexes $X$ with $X^k=0$ for $k\gg 0$.

If $X=(X,\d_X)$ is a complex and $k\in \Z$, then we let $X[k]$ denote the complex with $X[1]^i = X^{i+k}$ and $\d_{X[k]} = (-1)^k\d_X$.

\begin{definition}\label{def:twist}
If $X=(X,\d_X)$ is a complex, then any complex of the form $(X,\d_X+\a)$ we be referred to as a \emph{twist} of $X$, written $\tw_\a(X)$.
\end{definition}

If $f:X\rightarrow Y$ is a degree zero chain map then the \emph{mapping cone} is
\[
\Cone(f)  \ \ := \ \ \tw_{\smMatrix{0&0\\f&0}} (X[1]\oplus Y).
\]

If $\d\in \Hom_{\Ch(\AS)}^1(Z,U)$ is a closed degree 1 morphism then we write
\[
(Z\buildrel\d\over\rightarrow U) \ \ := \ \ \tw_{\smMatrix{0&0\\\d&0}} (Z\oplus U).
\]
So the mapping cone of a chain map $f:X\rightarrow Y$ may be indicated diagrammatically by
\[
\Cone(f) \ \ = \ \ (X[1]\buildrel f \over \rightarrow Y).
\]

Since $\AS$ is monoidal, $\Ch^-(\AS)$ inherits the structure of a monoidal category by
\[
(X\star Y)^k := \bigoplus_{i+j=k} X^i \star Y^j,\qquad\qquad \d_{X\star Y} = \d_X\star \Id_Y+ \Id_X\star \d_Y.
\]
The tensor product of morphisms $f\in \Hom_{\Ch^-(\AS)}(X,X')$ and $g\in \Hom_{\Ch^-(\AS)}(Y,Y')$ is defined using the Koszul sign rule:
\[
(f\star g)|_{X^i\star Y^j} = (-1)^{|g|i} f|_{X^i}\star g|_{Y^j}.
\]
Note that twisting is compatible with the tensor product in the sense that
\[
\tw_\a(X)\star \tw_\b(Y) \cong \tw_{\a\star \Id + \Id\star \b} (X\star Y),
\]
and suspension is compatible with the tensor product in the sense that
\[
X[k]\star Y[l]\cong (X\star Y)[k+l].
\]
Combining these, we see that mapping cones interact with tensor product according to
\[
Z\star \Cone(f) \ \cong \ \Cone(\Id_Z\star f),\qquad\qquad \Cone(f)\star Z\ \cong \ \Cone(f\star \Id_Z).
\]

\subsection{Idempotent algebras and coalgebras}
\label{ss:idemp alg and coalg}
The notion of idempotent (co)algebra makes sense in any monoidal category.  Below we are only interested in idempotent coalgebras in $\KC^-(\AS)$, and we restrict to this setting for concreteness.  The unpublished note \cite{BoyDrin-idemp} is an excellent read, and discusses idempotent coalgebras outside of the triangulated or dg setting.
\begin{definition}\label{def:idemp alg}
An \emph{idempotent coalgebra} in $\KC^-(\AS)$ is a complex $\PB\in \KC^-(\AS)$ equipped with a degree zero chain map $\boldsymbol{\e}:\PB\rightarrow \one$ such that 
\[
\PB\star \Cone(\boldsymbol{\e})\ \simeq \ 0 \ \simeq \ \Cone(\boldsymbol{\e})\star \PB
\]
(equivalently, $\Id_{\PB}\star \boldsymbol{\e}$ and $\boldsymbol{\e}\star \Id_{\PB}$ are homotopy equivalences). 

An \emph{idempotent algebra} in $\KC^-(\AS)$ is a complex $\AB\in \KC^-(\AS)$ equipped with a degree zero chain map $\boldsymbol{\eta}:\one\rightarrow \AB$ such that 
\[
\AB\star \Cone(\boldsymbol{\eta})\ \simeq \ 0 \ \simeq \ \Cone(\boldsymbol{\eta})\star \AB
\]
(equivalently, $\Id_{\AB}\star \boldsymbol{\eta}$ and $\boldsymbol{\eta}\star \Id_{\AB}$ are homotopy equivalences). 

A \emph{2-step idempotent decomposition} of $\one\in \AS$ is  homotopy equivalence
\[
\one \ \simeq \ \left(\AB\buildrel \d\over \rightarrow \PB\right)
\]
in which $\PB\star \AB\simeq 0 \simeq \AB\star \PB$.  Here the notation is as in the discussion following definition \ref{def:twist}; in particular $\d$ is a degree one closed morphism $\d\in \Hom^1_{\Ch(\AS)}(\AB,\PB)$.  We also refer to $\AB$ and $\PB$ as \emph{complementary idempotents}, and write $\AB \simeq \PB^c$, and $\PB\simeq \AB^c$.
\end{definition}

\begin{remark}
It is a nontrivial consequence of the definitions that if $(\PB,\boldsymbol{\e})$ is an idempotent coalgebra then $\boldsymbol{\e}\star \Id_\PB\simeq \Id_{\PB}\star \boldsymbol{\e}$, and their common homotopy inverse $\boldsymbol{\Delta}:\PB\rightarrow \PB\star \PB$ is coassociative up to homotopy.  This justifies our referring to $(\PB,\boldsymbol{\e})$ as an idempotent coalgebra.  Similar remarks apply to idempotent algebras. 
\end{remark}

Next we state without proof some basic facts concerning idempotent (co)algebras.  Proofs can be found in \cite{BoyDrin-idemp,Hog17a}.  Below, let $\one\simeq (\AB\rightarrow \PB)$ be an idempotent decomposition of identity.

\begin{observation}
Any idempotent coalgebra $(\PB,\e)$ determines a 2-step idempotent decomposition of identity by taking $\AB:=\Cone(\boldsymbol{\e})$.  Any idempotent algebra $(\AB,\boldsymbol{\eta})$ determines a 2-step idempotent decomposition of identity by taking $\PB:=\Cone(\boldsymbol{\eta})[-1]$.

Conversely, if $\one\simeq (\AB\rightarrow \PB)$ is an idempotent decomposition of identity then $\PB$ and $\AB$ are naturally equipped with the structures of an idempotent coalgebra and algebra, respectively.
\end{observation}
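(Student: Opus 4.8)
The plan is to prove the three assertions separately, using only the triangulated structure of $\KC^-(\AS)$ together with the formal properties of mapping cones recorded in \S\ref{ss:cxs}: namely $Z\star\Cone(f)\cong\Cone(\Id_Z\star f)$, $\Cone(f)\star Z\cong\Cone(f\star\Id_Z)$, and the compatibility of $\star$ with shifts. The only genuinely new observation needed is that a complex of the form $(\AB\to\PB)$ is, essentially by definition of the notation following Definition \ref{def:twist}, a mapping cone; so every such complex fits into a distinguished triangle with vertices $\PB$, $(\AB\to\PB)$, and $\AB$.

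For the first assertion, suppose $(\PB,\boldsymbol{\e})$ is an idempotent coalgebra and put $\AB:=\Cone(\boldsymbol{\e})$. The cone triangle $\PB\to\one\to\AB\to\PB[1]$ rotates to exhibit $\one$ as $\Cone(\AB[-1]\to\PB)$, which is a complex of the form $(\AB\to\PB)$; this gives the homotopy equivalence $\one\simeq(\AB\to\PB)$. For the two contractibility conditions I would simply write $\PB\star\AB=\PB\star\Cone(\boldsymbol{\e})\cong\Cone(\Id_\PB\star\boldsymbol{\e})$ and $\AB\star\PB=\Cone(\boldsymbol{\e})\star\PB\cong\Cone(\boldsymbol{\e}\star\Id_\PB)$, each of which is contractible precisely because $(\PB,\boldsymbol{\e})$ is an idempotent coalgebra. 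The second assertion is the formal dual: set $\PB:=\Cone(\boldsymbol{\eta})[-1]$, rotate the cone triangle of $\boldsymbol{\eta}$ twice to obtain $\one\simeq(\AB\to\PB)$, and deduce $\PB\star\AB\cong\Cone(\boldsymbol{\eta}\star\Id_\AB)[-1]\simeq0$ and $\AB\star\PB\cong\Cone(\Id_\AB\star\boldsymbol{\eta})[-1]\simeq0$ from the idempotent-algebra axioms.

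For the converse, suppose $\one\simeq(\AB\buildrel\d\over\to\PB)$ with $\PB\star\AB\simeq0\simeq\AB\star\PB$. As a graded object $(\AB\to\PB)$ is $\AB\oplus\PB$, and the twisted differential makes $\PB$ a subcomplex with quotient $\AB$; the associated degreewise-split short exact sequence, transported along the homotopy equivalence to $\one$, yields a distinguished triangle $\PB\buildrel\boldsymbol{\e}\over\to\one\buildrel\boldsymbol{\eta}\over\to\AB\to\PB[1]$. I take $\boldsymbol{\e}$ and $\boldsymbol{\eta}$, lifted to honest chain maps representing their homotopy classes, as the structure maps. From the triangle one reads $\Cone(\boldsymbol{\e})\simeq\AB$ and $\Cone(\boldsymbol{\eta})\simeq\PB[1]$, whence $\PB\star\Cone(\boldsymbol{\e})\simeq\PB\star\AB\simeq0$, $\Cone(\boldsymbol{\e})\star\PB\simeq\AB\star\PB\simeq0$, and likewise $\AB\star\Cone(\boldsymbol{\eta})\simeq(\AB\star\PB)[1]\simeq0$ and $\Cone(\boldsymbol{\eta})\star\AB\simeq(\PB\star\AB)[1]\simeq0$; these are exactly the defining conditions of an idempotent coalgebra and an idempotent algebra.

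I do not expect a real obstacle here; the only care required is the bookkeeping of shifts and signs when rotating triangles and identifying $(\AB\to\PB)$ with the appropriate cone, plus a sentence recalling why a degreewise-split short exact sequence of complexes determines a triangle in $\KC^-(\AS)$. I would also add a remark clarifying that ``naturally equipped'' refers only to the canonical production of the counit and unit maps from the given data (up to homotopy); the coassociativity and associativity of the resulting co/multiplications are the content of the remark preceding the observation and are cited rather than reproved.
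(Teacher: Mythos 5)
Your argument is correct. Note that the paper itself offers no proof of this observation: it is prefaced by ``Next we state without proof some basic facts concerning idempotent (co)algebras. Proofs can be found in \cite{BoyDrin-idemp,Hog17a},'' so there is no in-paper argument to compare against. What you have written is exactly the standard verification those references contain: the two forward directions are immediate from the definitions once one observes that $\PB\star\AB$ and $\AB\star\PB$ \emph{are} the cones $\PB\star\Cone(\boldsymbol{\e})$, $\Cone(\boldsymbol{\e})\star\PB$ (resp.\ shifts of $\Cone(\boldsymbol{\eta})\star\AB$, $\AB\star\Cone(\boldsymbol{\eta})$), and that rotating the cone triangle of $\boldsymbol{\e}$ (or $\boldsymbol{\eta}$) exhibits $\one$ as $(\AB\rightarrow\PB)$; the converse follows from the degreewise-split inclusion $\PB\hookrightarrow(\AB\rightarrow\PB)$ and projection onto $\AB$, which after transporting along $\one\simeq(\AB\rightarrow\PB)$ give the structure maps with $\Cone(\boldsymbol{\e})\simeq\AB$ and $\Cone(\boldsymbol{\eta})\simeq\PB[1]$. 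Your closing caveat is also apt: only the (co)unit structure is produced here, and the homotopy (co)associativity of the induced (co)multiplication is the content of the preceding remark, not of this observation.
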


\begin{observation}
For any $X,Y\in \KC^-(\AS)$ we have $\PB\star X\simeq X$ iff $\AB\star X\simeq 0$, and $\PB\star Y\simeq 0$ iff $\AB\star Y\simeq Y$.  Similar statements hold with $X,Y$ on the left.
\end{observation}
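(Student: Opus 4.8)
The plan is to deduce all four equivalences from a single distinguished triangle, obtained by tensoring the given decomposition $\one\simeq(\AB\buildrel\d\over\rightarrow\PB)$ with $X$, together with the vanishing relations $\PB\star\AB\simeq0\simeq\AB\star\PB$ which are part of the data of a $2$-step idempotent decomposition. First I would note that $-\star X$ is a dg functor on $\Ch^-(\AS)$, hence preserves homotopy equivalences and contractible complexes, and that it is compatible with twists; thus it carries the two-term complex $(\AB\buildrel\d\over\rightarrow\PB)$ to $(\AB\star X\buildrel\d\star\Id_X\over\longrightarrow\PB\star X)$. Since $\one\star X\cong X$, this gives a homotopy equivalence $X\simeq(\AB\star X\buildrel\d\star\Id_X\over\longrightarrow\PB\star X)$. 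In this twisted complex $\PB\star X$ is a degreewise-split subcomplex with quotient $\AB\star X$, so we obtain a distinguished triangle
\[
\PB\star X\ \longrightarrow\ X\ \longrightarrow\ \AB\star X\ \xrightarrow{\ +1\ }
\]
in $\KC^-(\AS)$ (concretely, one may take $\AB\star X\simeq\Cone$ of the map $\PB\star X\to X$).

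Now the four implications are immediate. If $\AB\star X\simeq0$, the third vertex of the triangle vanishes, forcing $\PB\star X\to X$ to be an isomorphism in $\KC^-(\AS)$, i.e.\ $\PB\star X\simeq X$. Conversely, if $\PB\star X\simeq X$, then applying $\AB\star-$ (which preserves homotopy equivalences) gives $\AB\star X\simeq\AB\star(\PB\star X)=(\AB\star\PB)\star X\simeq0$, using $\AB\star\PB\simeq0$. Symmetrically (replacing $X$ by $Y$): if $\PB\star Y\simeq0$ the first vertex vanishes, so $Y\to\AB\star Y$ is an isomorphism and $\AB\star Y\simeq Y$; and if $\AB\star Y\simeq Y$ then $\PB\star Y\simeq\PB\star(\AB\star Y)=(\PB\star\AB)\star Y\simeq0$, using $\PB\star\AB\simeq0$. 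The statements with $X,Y$ on the left follow from the identical argument after tensoring $\one\simeq(\AB\buildrel\d\over\rightarrow\PB)$ on the left, once more invoking both of $\PB\star\AB\simeq0$ and $\AB\star\PB\simeq0$.

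The argument is entirely formal and presents no substantial obstacle; the only points requiring care are bookkeeping ones. One must check that the twisted complex $(\AB\to\PB)\star X$ genuinely has $\PB\star X$ --- not $\AB\star X$ --- as its subcomplex, so that the triangle points in the direction asserted, and one must keep track of which of the two relations $\PB\star\AB\simeq0$, $\AB\star\PB\simeq0$ is used in each converse implication and in the left- versus right-handed versions. If one prefers to avoid explicit appeal to the triangulated structure of $\KC^-(\AS)$, the two ``vanishing $\Rightarrow$ equivalence'' implications can instead be obtained from the elementary fact that a degreewise-split short exact sequence of complexes in which one of the outer terms is contractible exhibits a homotopy equivalence between the other two terms.
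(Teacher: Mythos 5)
Your argument is correct. The paper states this observation without proof, deferring to the references \cite{BoyDrin-idemp,Hog17a}, so there is no in-text proof to compare against; your derivation — tensoring $\one\simeq(\AB\rightarrow\PB)$ with $X$ to obtain the triangle $\PB\star X\rightarrow X\rightarrow \AB\star X$ in $\KC^-(\AS)$, using the vanishing of one vertex for the ``vanishing $\Rightarrow$ equivalence'' directions and the orthogonality relations $\PB\star\AB\simeq 0\simeq\AB\star\PB$ together with associativity for the converses — is the standard one, and your bookkeeping (that $\PB\star X$ is the degreewise-split subcomplex, and which orthogonality relation is invoked where) is accurate.
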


\begin{observation}
The following full subcategories of $\KC^-(\AS)$ are closed under mapping cones, direct sum, and suspension:
\begin{enumerate}
\item $\{X\in \KC^-(\AS)\ |\ \PB\star X\simeq X\}$ (and similarly with $X$ on the left).
\item $\{X\in \KC^-(\AS)\ |\ \PB\star X\simeq 0\}$ (and similarly with $X$ on the left).
\item $\{X\in \KC^-(\AS)\ |\ \PB\star X\simeq X\star \PB\}$.
\item $\{X\in \KC^-(\AS)\ |\ \AB\star X\simeq X\star \AB\}$.
\item $\{X\in \KC^-(\AS)\ |\ \AB\star X\star \PB\simeq 0 \simeq \PB\star X\star \AB\}$.
\end{enumerate}
In fact, the categories (3), (4), and (5) coincide.
\end{observation}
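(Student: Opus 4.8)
The plan is to verify each of the stated closure properties by direct diagram chasing in $\KC^-(\AS)$, exploiting the fact that $\star$ is an additive bifunctor and that $\Cone(\cdot)$, $\oplus$, and $[1]$ are all built from $\star$-compatible operations. First I would dispose of (1) and (2). For (1), suppose $\PB\star X\simeq X$ and $\PB\star X'\simeq X'$. Since $\PB\star(-)$ is an exact (triangulated) functor on $\KC^-(\AS)$ — it commutes with $\Cone$, $\oplus$, and $[1]$ by the compatibilities $Z\star\Cone(f)\cong\Cone(\Id_Z\star f)$, $X[k]\star Y[l]\cong(X\star Y)[k+l]$, and distributivity of $\star$ over $\oplus$ recorded in \S\ref{ss:cxs} — we get $\PB\star(X\oplus X')\simeq\PB\star X\oplus\PB\star X'\simeq X\oplus X'$, $\PB\star X[1]\simeq(\PB\star X)[1]\simeq X[1]$, and for a chain map $f:X\to X'$, $\PB\star\Cone(f)\simeq\Cone(\Id_\PB\star f)$; using the homotopy equivalences $\PB\star X\simeq X$, $\PB\star X'\simeq X'$ one identifies $\Cone(\Id_\PB\star f)\simeq\Cone(f)$ (naturality of the cone under homotopy equivalence of the source and target), giving $\PB\star\Cone(f)\simeq\Cone(f)$. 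The argument for (2) is identical with $X$ replaced by $0$, and the ``$X$ on the left'' variants use $\Cone(f)\star Z\cong\Cone(f\star\Id_Z)$ instead.

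For (3) and (4), the same exactness of $\PB\star(-)$ and $(-)\star\PB$ (resp.\ with $\AB$) applies on both sides. If $\PB\star X\simeq X\star\PB$ and $\PB\star X'\simeq X'\star\PB$, then $\PB\star(X\oplus X')\simeq\PB\star X\oplus\PB\star X'\simeq X\star\PB\oplus X'\star\PB\simeq(X\oplus X')\star\PB$; the suspension case is analogous; and for $f:X\to X'$ one computes $\PB\star\Cone(f)\simeq\Cone(\Id_\PB\star f)$ and $\Cone(f)\star\PB\simeq\Cone(f\star\Id_\PB)$, and these two cones are homotopy equivalent because $\Id_\PB\star f$ and $f\star\Id_\PB$ fit into a homotopy-commutative square with the given equivalences $\PB\star X\simeq X\star\PB$, $\PB\star X'\simeq X'\star\PB$ as the vertical maps — a homotopy-commutative square of this form induces a homotopy equivalence on cones. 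Property (4) is the same verbatim with $\AB$ in place of $\PB$. Property (5) follows by two applications of the exactness of $\AB\star(-)\star\PB$ and $\PB\star(-)\star\AB$ (a composite of the one-sided functors, hence again exact), exactly as in (2).

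It remains to prove that (3), (4), and (5) coincide. Here I would use the two Observations immediately preceding this one, together with the idempotent decomposition $\one\simeq(\AB\to\PB)$. Given $X$ with $\PB\star X\simeq X\star\PB$, smash on the left with $\AB$: since $\AB\star\PB\simeq 0$, we get $\AB\star X\star\PB\simeq\AB\star\PB\star X\simeq 0$; symmetrically $\PB\star X\star\AB\simeq X\star\PB\star\AB\simeq 0$ — wait, that last step needs $\PB\star X\simeq X\star\PB$ used on the left factor, which is fine — so (3) $\Rightarrow$ (5). For (5) $\Rightarrow$ (3): assume $\AB\star X\star\PB\simeq 0\simeq\PB\star X\star\AB$. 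Tensoring the decomposition $\one\simeq(\AB\to\PB)$ against $X$ on the left and against $X\star\PB$ or $X\star\AB$ on the right, and using that $\star$ is exact so it takes the triangle $\AB\to\one\to\PB$ to triangles, one expresses $X\star\PB$ up to homotopy in terms of $\PB\star X\star\PB$ and $\AB\star X\star\PB\simeq 0$, concluding $X\star\PB\simeq\PB\star X\star\PB$; likewise $\PB\star X\simeq\PB\star X\star\PB$, so $\PB\star X\simeq X\star\PB$, i.e.\ (3) holds. The equivalence of (4) with (5) is the dual computation with the roles of $\AB$ and $\PB$ swapped (using $\PB\star\AB\simeq 0\simeq\AB\star\PB$ again, and $X\star\AB\simeq\AB\star X\star\AB\simeq\AB\star X$). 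The main obstacle I anticipate is purely bookkeeping: making the ``homotopy-commutative square induces equivalence on cones'' step precise and checking that all the compatibility isomorphisms from \S\ref{ss:cxs} can be threaded through the cone construction without sign or associativity mishaps; none of it is conceptually deep, but the argument for (5) $\Rightarrow$ (3)/(4) requires carefully choosing which variable to resolve via $\one\simeq(\AB\to\PB)$ so that the vanishing hypotheses actually kill the unwanted term.
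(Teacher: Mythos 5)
The paper gives no proof of this observation at all (it is stated ``without proof'' with a pointer to \cite{BoyDrin-idemp,Hog17a}), so there is nothing internal to compare against; your argument is essentially the standard one, and the part establishing that (3), (4), (5) coincide is correct: the implications (3)$\Rightarrow$(5) and (4)$\Rightarrow$(5) by absorbing $\PB\star\AB\simeq 0\simeq\AB\star\PB$, and the converses by tensoring $\one\simeq(\AB\to\PB)$ into the correct slot to get $X\star\PB\simeq\PB\star X\star\PB\simeq\PB\star X$ (resp.\ $X\star\AB\simeq\AB\star X\star\AB\simeq\AB\star X$), all check out.

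The one genuine soft spot is your direct proof that (3) is closed under cones. You assert that $\Id_{\PB}\star f$ and $f\star\Id_{\PB}$ ``fit into a homotopy-commutative square with the given equivalences $\PB\star X\simeq X\star\PB$, $\PB\star X'\simeq X'\star\PB$ as the vertical maps.'' For arbitrarily chosen equivalences there is no reason this square homotopy-commutes: membership in (3) only posits the \emph{existence} of some equivalence, with no naturality, so the square need not close up to homotopy and the cone comparison does not follow as written. (The analogous step for (1) is fine, but only because there the equivalence can be taken to be the natural map $\boldsymbol{\e}\star\Id_X$, or equivalently because (1) is the vanishing condition $\AB\star X\simeq 0$.) The clean fix — which your own write-up already contains the ingredients for — is to reverse the logical order: first prove (3)$=$(4)$=$(5) as you do, then observe that (5) is a pure vanishing condition, so it is closed under cones, sums, and shifts because $\AB\star(-)\star\PB$ and $\PB\star(-)\star\AB$ commute with these operations and a cone of a map between contractible complexes is contractible; closure of (3) and (4) is then inherited for free, and the unjustified square argument can be deleted.
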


\begin{observation}
The following are equivalent:
\begin{enumerate}
\item  $\PB\star X\simeq X\star \PB$ for all $X\in \KC^-(\AS)$.
\item $\AB\star X\simeq X\star \AB$  for all $X\in \KC^-(\AS)$.
\item $\{X\in \KC^-(\AS)\ |\ \PB\star X\simeq X\} \ \ = \ \ \{X\in \KC^-(\AS)\ |\ X\star \PB\simeq X\}$.
\item $\{X\in \KC^-(\AS)\ |\ \PB\star X\simeq 0\} \ \ = \ \ \{X\in \KC^-(\AS)\ |\ X\star \PB\simeq 0\}$
\end{enumerate}
In fact, the homotopy equivalences from (1) and (2) can be chosen to be natural in $X$, up to homotopy.
\end{observation}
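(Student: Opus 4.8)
The plan is to deduce all four conditions from the structure of a $2$-step idempotent decomposition $\one\simeq(\AB\to\PB)$ recalled in the preceding observations, together with the cone--tensor compatibilities from \S\ref{ss:cxs}. Fix a counit $\boldsymbol{\e}\colon\PB\to\one$ and a unit $\boldsymbol{\eta}\colon\one\to\AB$ exhibiting $\PB,\AB$ as complementary idempotent coalgebra and algebra, so that $\AB\simeq\Cone(\boldsymbol{\e})$, $\Cone(\boldsymbol{\eta})\simeq\PB[1]$, $\PB\star\PB\simeq\PB$, $\AB\star\AB\simeq\AB$ and $\PB\star\AB\simeq0\simeq\AB\star\PB$. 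The implications $(1)\Rightarrow(3)$ and $(1)\Rightarrow(4)$ are immediate: if $\PB\star X\simeq X\star\PB$ for all $X$ then $\PB\star X\simeq X$ iff $X\star\PB\simeq X$, and $\PB\star X\simeq0$ iff $X\star\PB\simeq0$. For $(3)\Rightarrow(1)$, given $X$ the object $\PB\star X$ lies in $\{Y:\PB\star Y\simeq Y\}$ because $\PB\star\PB\simeq\PB$; by $(3)$ it then lies in $\{Y:Y\star\PB\simeq Y\}$, so $\PB\star X\star\PB\simeq\PB\star X$. Symmetrically $X\star\PB$ lies in $\{Y:Y\star\PB\simeq Y\}=\{Y:\PB\star Y\simeq Y\}$, so $\PB\star X\star\PB\simeq X\star\PB$; concatenating gives $\PB\star X\simeq X\star\PB$.

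Next I would treat $(2)\Leftrightarrow(4)$. By the observation ``$\PB\star Y\simeq0\Leftrightarrow\AB\star Y\simeq Y$'' and its right-handed form, condition $(4)$ is equivalent to $\{Y:\AB\star Y\simeq Y\}=\{Y:Y\star\AB\simeq Y\}$, which is the analogue of $(3)$ with $\AB$ in place of $\PB$. Since $\AB\star\AB\simeq\AB$, the argument of the previous paragraph applies verbatim with $\AB$ for $\PB$ and yields $\AB\star X\simeq X\star\AB$ for all $X$, i.e.\ $(2)$; and conversely $(2)$ forces this set equality exactly as $(1)\Rightarrow(3)$ above.

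It remains to link $(1)$ and $(2)$. For $(1)\Rightarrow(2)$: applying $(1)$ to $Y=X\star\AB$ and to $Y=\AB\star X$ and using $\PB\star\AB\simeq0\simeq\AB\star\PB$ gives $\PB\star X\star\AB\simeq0\simeq\AB\star X\star\PB$ for every $X$. Since $\Cone(\boldsymbol{\eta})\simeq\PB[1]$, the cone--tensor compatibilities give $\Cone(\Id_{\AB\star X}\star\boldsymbol{\eta})\cong(\AB\star X\star\PB)[1]\simeq0$ and $\Cone(\boldsymbol{\eta}\star\Id_{X\star\AB})\cong(\PB\star X\star\AB)[1]\simeq0$, so in
\[
\AB\star X\ \xrightarrow{\ \Id_{\AB}\star\Id_X\star\boldsymbol{\eta}\ }\ \AB\star X\star\AB\ \xleftarrow{\ \boldsymbol{\eta}\star\Id_X\star\Id_{\AB}\ }\ X\star\AB
\]
both maps are homotopy equivalences, whence $\AB\star X\simeq X\star\AB$. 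The implication $(2)\Rightarrow(1)$ is the mirror image: $(2)$ forces $\AB\star X\star\PB\simeq0\simeq\PB\star X\star\AB$, and then the natural maps $\PB\star X\star\PB\to\PB\star X$ and $\PB\star X\star\PB\to X\star\PB$ induced by $\boldsymbol{\e}$ have contractible cones. This closes the loop $(3)\Leftrightarrow(1)\Leftrightarrow(2)\Leftrightarrow(4)$. For the naturality addendum, note that in each such display the two maps are the components of natural transformations between the additive endofunctors $\AB\star(-)$, $(-)\star\AB$, $\AB\star(-)\star\AB$ of $\KC^-(\AS)$ (resp.\ their $\PB$-analogues); an objectwise-invertible natural transformation between such functors admits a natural two-sided homotopy inverse, so composing the inverse of one leg with the other leg produces homotopy equivalences in $(1)$ and $(2)$ that are natural in $X$.

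The main obstacle I anticipate is purely bookkeeping: keeping straight which of $\boldsymbol{\e}$, $\boldsymbol{\eta}$ acts, on which side, and with which homological shift, when identifying the cones of the ``sandwich'' maps with $(\PB\star X\star\AB)[\pm1]$ or $(\AB\star X\star\PB)[\pm1]$, and then verifying that the assembled zig-zags are genuinely natural transformations of functors on $\KC^-(\AS)$ rather than merely objectwise equivalences. No serious conceptual difficulty remains once the identity $\Cone(\boldsymbol{\eta})\simeq\PB[1]$, the relations $\PB\star\AB\simeq0\simeq\AB\star\PB$, and the cone--tensor compatibilities of \S\ref{ss:cxs} are in hand.
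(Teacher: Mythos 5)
Your proof is correct, and since the paper states this observation without proof (deferring to \cite{BoyDrin-idemp,Hog17a}), your argument supplies exactly the standard reasoning one would expect there: the zig-zag $\AB\star X\to\AB\star X\star\AB\leftarrow X\star\AB$ whose legs have contractible cones because $\Cone(\boldsymbol{\eta})\simeq\PB[1]$ and the sandwiches $\PB\star X\star\AB$, $\AB\star X\star\PB$ vanish, together with the translation of (3) and (4) into absorption statements via $\PB\star\PB\simeq\PB$ and the complementarity observations. The naturality addendum is also handled correctly, since an objectwise-invertible natural transformation of functors on $\KC^-(\AS)$ has a natural inverse.
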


Note that $(\PB,\boldsymbol{\e})$ may be regarded as a counital object in the homotopy category $\KC^-(\AS)$, so the transitive reflexive relation $\leq$ from Definition \ref{def:counit order} applies.  The following is a strengthening of Lemma \ref{lemma:counitorder} in the context of idempotent coalgebras (compare with Theorem 4.24 in \cite{Hog17a}).  It is well-known to experts.

\begin{proposition}\label{prop:idemp coalg order}
Let $(\PB_i,\boldsymbol{\e}_i)$ be idempotent coalgebras ($i=1,2$). The following are equivalent.
\begin{enumerate}
\item $\PB_1\leq \PB_2$.
\item $\PB_1\star \PB_2\simeq \PB_1$.
\item $\PB_2\star \PB_1\simeq \PB_1$.
\end{enumerate}
\end{proposition}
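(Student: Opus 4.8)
The plan is to prove the cycle of implications (1) $\Rightarrow$ (2) $\Rightarrow$ (3) $\Rightarrow$ (1), exploiting the fact that each $\PB_i$ is an idempotent coalgebra so that $\Id_{\PB_i}\star\boldsymbol{\e}_i$ and $\boldsymbol{\e}_i\star\Id_{\PB_i}$ are homotopy equivalences. The key observation underlying everything is that tensoring the relation $\PB_i\star\Cone(\boldsymbol{\e}_i)\simeq 0$ on either side by any complex $X$ yields $\PB_i\star X\star\Cone(\boldsymbol{\e}_i)\simeq 0$ (since $\star$ is bilinear and cones of $\boldsymbol{\e}_i\star\Id_X$ are cones of $\boldsymbol{\e}_i$ tensored with $X$), hence $\Id_{\PB_i}\star\Id_X\star\boldsymbol{\e}_i$ is a homotopy equivalence for any $X$; and symmetrically on the left. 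In other words, $\PB_i$ absorbs $\boldsymbol{\e}_i$ from the far side regardless of what sits in between.

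For (1) $\Rightarrow$ (2): by Lemma~\ref{lemma:counitorder} applied in $\KC^-(\AS)$, the hypothesis $\PB_1\leq\PB_2$ is equivalent to $\PB_1\leq_L\PB_2$, i.e.\ $\PB_1$ is a retract (up to homotopy) of $X\star\PB_2$ for some $X\in\KC^-(\AS)$. Then $\PB_1\star\PB_2$ is a homotopy retract of $X\star\PB_2\star\PB_2\simeq X\star\PB_2$ (using that $\boldsymbol{\e}_2\star\Id_{\PB_2}\colon\PB_2\star\PB_2\to\PB_2$, equivalently $\Id_{\PB_2}\star\boldsymbol{\e}_2$, is a homotopy equivalence — this is the definition of idempotent coalgebra). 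More cleanly: the category $\{X : \PB_2\star X\simeq X\}$ of Observation (the one listing subcategories closed under cones/sums/suspensions, part (1)) contains $\PB_2$ and is thick, hence contains every homotopy retract of $X\star\PB_2$, in particular $\PB_1$; so $\PB_2\star\PB_1\simeq\PB_1$, which is (3). Symmetrically, using the right-handed version, $\PB_1\star\PB_2\simeq\PB_1$, which is (2). (In fact this already gives (1) $\Rightarrow$ (2) and (1) $\Rightarrow$ (3) simultaneously.)

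For (2) $\Rightarrow$ (1): from $\PB_1\star\PB_2\simeq\PB_1$ we must produce a chain map $\nu\colon\PB_1\to\PB_2$ with $\boldsymbol{\e}_2\circ\nu\simeq\boldsymbol{\e}_1$. Take the composite $\PB_1\simeq\PB_1\star\PB_2\xrightarrow{\boldsymbol{\e}_1\star\Id_{\PB_2}}\one\star\PB_2\cong\PB_2$; call it $\nu$. Then $\boldsymbol{\e}_2\circ\nu$ is, up to the equivalence $\PB_1\simeq\PB_1\star\PB_2$, the map $(\boldsymbol{\e}_1\star\Id_{\PB_2})$ followed by $\boldsymbol{\e}_2$, i.e.\ $\boldsymbol{\e}_1\star\boldsymbol{\e}_2\colon\PB_1\star\PB_2\to\one$; but under the same equivalence this is just $\boldsymbol{\e}_1$ (precompose $\boldsymbol{\e}_1\star\boldsymbol{\e}_2$ with the equivalence $\PB_1\xrightarrow{\sim}\PB_1\star\PB_2$ — one checks $\boldsymbol{\e}_1\star\boldsymbol{\e}_2$ restricted along this equivalence is homotopic to $\boldsymbol{\e}_1$ by functoriality, since the equivalence $\PB_1\to\PB_1\star\PB_2$ composed with $\Id_{\PB_1}\star\boldsymbol{\e}_2$ is homotopic to $\Id_{\PB_1}$). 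Hence $\boldsymbol{\e}_2\circ\nu\simeq\boldsymbol{\e}_1$, giving $\PB_1\leq\PB_2$. The implication (3) $\Rightarrow$ (1) is identical with left/right interchanged, or alternatively one closes the loop via (1) $\Leftrightarrow$ (2) and the symmetric (1) $\Leftrightarrow$ (3).

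\textbf{Main obstacle.} The delicate point is (2) $\Rightarrow$ (1): one needs to check that the homotopy equivalence $\PB_1\simeq\PB_1\star\PB_2$ is genuinely compatible with the counits $\boldsymbol{\e}_1$ and $\Id_{\PB_1}\star\boldsymbol{\e}_2$ — that is, that the equivalence can be (or automatically is, up to homotopy) realized by a map inducing $\boldsymbol{\e}_1$ from $\Id_{\PB_1}\star\boldsymbol{\e}_2$. This is where the \emph{counital} structure, rather than merely the cell order, is used, and it is exactly the content that makes this proposition a genuine strengthening of Lemma~\ref{lemma:counitorder}; I expect the argument to hinge on the uniqueness/naturality properties of idempotent coalgebras recorded in the Observations (particularly that $\boldsymbol{\Delta}\colon\PB\to\PB\star\PB$ is a two-sided homotopy inverse to both $\Id_\PB\star\boldsymbol{\e}$ and $\boldsymbol{\e}\star\Id_\PB$), so that any self-equivalence of $\PB_1$ respecting the coalgebra structure is forced to be compatible with $\boldsymbol{\e}_1$ up to homotopy.
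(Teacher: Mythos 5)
Your proof is correct and follows essentially the same route as the paper: the forward direction reduces to $\PB_1\leq_L\PB_2$ via Lemma \ref{lemma:counitorder} and then uses that the right tensor ideal killed by $\AB_2$ (equivalently preserved by $\PB_2$) is thick, while your (2) $\Rightarrow$ (1) argument is exactly the proof of Lemma \ref{lemma:counitorder} (with $f$ there being your homotopy inverse $g$ of $\Id_{\PB_1}\star\boldsymbol{\e}_2$) transplanted into $\KC^-(\AS)$ --- the paper simply cites that lemma, having noted at the outset that the $\PB_i$ are counital objects of the monoidal category $\KC^-(\AS)$, so your ``main obstacle'' is already absorbed there. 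One small handedness slip: a retract of $X\star\PB_2$ lies in $\{Y\mid Y\star\PB_2\simeq Y\}$, which yields (2), not in $\{Y\mid \PB_2\star Y\simeq Y\}$ as you wrote; this is harmless since $\leq_L$ and $\leq_R$ coincide for counital objects, so (3) follows by the mirror argument.
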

\begin{proof}
Throughout the proof we regard $\PB_i$ as counital objects in the additive monoidal category $\KC^-(\AS)$,  so that the results of \S \ref{ss:counits} are available. Let $\AB_i:=\PB_i^c$ be the complementary idempotents.

Observe that $\PB_1\leq \PB_2$ implies $\PB_1\leq_L \PB_2$ (Lemma \ref{lemma:counitorder}) which implies $\PB_1$ is a retract of $\PB_1\star \PB_2$ (Lemma \ref{lemma:X leq C}).  This, in turn implies that $\PB_1\star \AB_2$ is a retract of $\PB_1\star \PB_2\star \AB_2$, which is contractible.  Thus, (1) implies $\PB_1\star \AB_2\simeq 0$, which is equivalent to (2).  Conversely, (2) clearly implies $\PB_1\leq_L \PB_2$, which implies (1) by Lemma \ref{lemma:counitorder}.

A similar argument establishes the equivalence (1) $\Leftrightarrow$ (3).
\end{proof}

\begin{corollary}\label{cor:uniqueness}
An idempotent coalgebra $\PB\in \KC^-(\AS)$ is uniquely determined up to homotopy equivalence by either of the following full subcategories of $\KC^-(\AS)$:
\begin{enumerate}
\item $\{X\in \KC^-(\AS)\ |\ \PB\star X\simeq X\}$.
\item $\{X\in \KC^-(\AS)\ |\ \PB\star X\simeq 0\}$.
\item $\{X\in \KC^-(\AS)\ |\ X\star \PB \simeq X\}$.
\item $\{X\in \KC^-(\AS)\ |\  X\star \PB\simeq 0\}$.
\end{enumerate}
\end{corollary}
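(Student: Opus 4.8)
The plan is to reduce everything to Proposition~\ref{prop:idemp coalg order} together with the existence of the complementary idempotent $\AB = \Cone(\boldsymbol{\e})$ and the resulting $2$-step decomposition $\one\simeq(\AB\to\PB)$. Suppose $(\PB_1,\boldsymbol{\e}_1)$ and $(\PB_2,\boldsymbol{\e}_2)$ are idempotent coalgebras whose associated subcategories of the same chosen type (one of (1)--(4)) coincide; I want to conclude $\PB_1\simeq\PB_2$. In each case the argument has the same shape: use the hypothesis to place one of the $\PB_i$ inside the defining subcategory of the other, read off an equivalence such as $\PB_2\star\PB_1\simeq\PB_1$ or $\PB_1\star\PB_2\simeq\PB_1$, invoke Proposition~\ref{prop:idemp coalg order} to get $\PB_1\le\PB_2$, then run the symmetric argument to get $\PB_2\le\PB_1$. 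Finally one combines the two inequalities: $\PB_1\le\PB_2$ gives $\PB_1\star\PB_2\simeq\PB_1$, while $\PB_2\le\PB_1$ gives $\PB_1\star\PB_2\simeq\PB_2$ (Proposition~\ref{prop:idemp coalg order} applied to the pair $(\PB_2,\PB_1)$), so $\PB_1\simeq\PB_1\star\PB_2\simeq\PB_2$.

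For the category in (1), namely $\CS_i:=\{X:\PB_i\star X\simeq X\}$, idempotency gives $\PB_1\in\CS_1=\CS_2$, hence $\PB_2\star\PB_1\simeq\PB_1$, and Proposition~\ref{prop:idemp coalg order}\,(3)$\Rightarrow$(1) yields $\PB_1\le\PB_2$; swapping indices finishes. Case (3), with $X$ on the right, is the mirror image. For the category in (2), namely $\mathcal D_i:=\{X:\PB_i\star X\simeq 0\}$, write $\AB_i:=\Cone(\boldsymbol{\e}_i)$, so $\PB_i\star\AB_i\simeq 0$ and $\one\simeq(\AB_i\to\PB_i)$. Then $\AB_2\in\mathcal D_2=\mathcal D_1$, so $\PB_1\star\AB_2\simeq 0$; tensoring $\one\simeq(\AB_2\to\PB_2)$ on the left by $\PB_1$ and using that $\star$ commutes with mapping cones (\S\ref{ss:cxs}) gives $\PB_1\simeq\PB_1\star\PB_2$, hence $\PB_1\le\PB_2$ by Proposition~\ref{prop:idemp coalg order}\,(2)$\Rightarrow$(1); swapping indices finishes, and case (4) is again the mirror image.

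There is no genuine obstacle once Proposition~\ref{prop:idemp coalg order} is available; the only points requiring care are the left/right bookkeeping in cases (3) and (4), and the basic fact---already recorded in \S\ref{ss:cxs} and the observations of \S\ref{ss:idemp alg and coalg}---that tensoring a $2$-step decomposition $\one\simeq(\AB\to\PB)$ by an object $X$ produces $X\simeq(X\star\AB\to X\star\PB)$, so that $X\star\AB\simeq 0$ forces $X\simeq X\star\PB$. I would present the proof of case (1) in full and then simply indicate that (2) follows by passing to complementary idempotents and that (3), (4) follow by the left--right symmetry (equivalently, by applying (1), (2) in $\AS^{\mathrm{op}}$).
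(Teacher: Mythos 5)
Your proof is correct and follows essentially the same route as the paper: place each $\PB_i$ (or its complement) in the other's defining subcategory, feed the resulting absorption $\PB_i\star\PB_j\simeq\PB_i$ into Proposition~\ref{prop:idemp coalg order}, and combine the two inequalities. The only (minor) divergence is in cases (2) and (4), where the paper invokes the idempotent-algebra analogue of Proposition~\ref{prop:idemp coalg order} while you instead tensor the decomposition $\one\simeq(\AB_2\to\PB_2)$ to reduce to the coalgebra version --- a slightly more self-contained handling of the same step.
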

\begin{proof}
Suppose $\PB$ and $\PB'$ are idempotent coalgebras in $\KC^-(\AS)$ satisfying $\PB\star X\simeq X$ if and only if $\PB'\star X\simeq X$, for all $X\in \KC^-(\AS)$.  Then $\PB'\star \PB'\simeq \PB'$ implies $\PB\star \PB'\simeq \PB'$, which is equivalent to $\PB'\star \PB\simeq \PB'$ by  Proposition \ref{prop:idemp coalg order}.  By symmetry we also have $\PB'\star \PB\simeq \PB$, hence $\PB\simeq \PB'$.    This proves that $\PB$ is uniquely determined by the full subcategory (1).  A similar argument takes care of the remaining cases (though for (2) and (4) it is necessary to use the version of Proposition \ref{prop:idemp coalg order} which applies to idempotent algebras; details are left to the reader).
\end{proof}

\subsection{The categorical idempotents associated to $C$}
\label{ss:the construction}

Let $(C,\e)$ be a counital object in $\AS$, and define the following complex in $\Ch^-(\AS)$:
\begin{equation}\label{eq:introbarP}
\PB_{C} \ := \ \ \ \ 
\begin{tikzpicture}[baseline=-.2em]
\tikzstyle{every node}=[font=\small]
\node (a) at (0,0) {$\underline{C}$};
\node (b) at (-2,0) {$C^{\star 2}$};
\node (c) at (-4,0) {$C^{\star 3}$};
\node (d) at (-6,0) {$\cdots$};
\path[->,>=stealth',shorten >=1pt,auto,node distance=1.8cm]
(b) edge node {$\e \Id -\Id\e$} (a)
(c) edge node {} (b) 
(d) edge node {} (c);
\end{tikzpicture},
\end{equation}
where we have underlined the term in degree zero, and the differential $C^{\star r+1}\rightarrow C^{\star r}$ is the alternating sum $\sum_{i=0}^r (-1)^i \Id^{\star i}\star \e\star \Id^{\star r-i}$.

Note that $\PB_C$ has a chain map $\PB_C\rightarrow \one$ (given by $\e$ in degree zero).  We let $\AB_C:=\Cone(\PB_C\rightarrow \one)$.  That is to say,
\begin{equation}\label{eq:introbarA}
\AB_C \ := \ \ \ \ 
\begin{tikzpicture}[baseline=-.2em]
\tikzstyle{every node}=[font=\small]
\node (a) at (0,0) {$\underline{\one}$};
\node (b) at (-2,0) {$C$};
\node (c) at (-4.5,0) {$C^{\star 2}$};
\node (d) at (-10,0) {$\cdots $};

\path[->,>=stealth',shorten >=1pt,auto,node distance=1.8cm]
(b) edge node {$\e$} (a)
(c) edge node {$-(\e \Id -\Id\e)$} (b)
(d) edge node {$-(\e \Id\Id - \Id \e \Id + \Id\Id\e)$} (c);
\end{tikzpicture}.
\end{equation}

\begin{lemma}\label{lemma:A kills C}
We have $\AB_C\star C\simeq 0\simeq C\star \AB_C$.
\end{lemma}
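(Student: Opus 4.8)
The plan is to write down an explicit contracting homotopy on $\AB_C\star C$; the equivalence $C\star\AB_C\simeq 0$ then follows by the symmetric argument, using a right inverse $\Delta_R\colon C\to C\star C$ of $\Id_C\star\e$ in place of a right inverse $\Delta_L\colon C\to C\star C$ of $\e\star\Id_C$ (formally, by applying the result to $\AS$ equipped with the opposite tensor order).

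First I would unwind the definition. Since $\AB_C=\Cone(\PB_C\xrightarrow{\e}\one)$ and $C$ sits in homological degree zero, tensoring on the right gives $(\AB_C\star C)^{0}=C$ and $(\AB_C\star C)^{-k}=C^{\star k+1}$ for $k\ge 1$, and the differential $C^{\star k+1}\to C^{\star k}$ is, up to an overall sign depending on $k$, the alternating sum $\sum_{i=0}^{k-1}(-1)^i\,\Id^{\star i}\star\e\star\Id^{\star k-i}$ of the contractions of the first $k$ tensor factors. The essential point is that the rightmost tensor factor, coming from the external $\star C$, is never touched by the differential; call it the \emph{protected} factor.

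Next I would build the homotopy. Let $\Delta_L\colon C\to C\star C$ be a right inverse of $\e\star\Id_C\colon C\star C\to\one\star C\cong C$, which exists since $(C,\e)$ is counital, and define $h_k\colon(\AB_C\star C)^{-k}\to(\AB_C\star C)^{-k-1}$ to be $\pm\,\Id_{C^{\star k}}\star\Delta_L$, i.e.\ split the protected factor into two via $\Delta_L$, retaining the second output as the new protected factor. Then I would compute $d\circ h+h\circ d$ degree by degree. In $d\circ h_k$ the differential either contracts the first output of the inserted $\Delta_L$, yielding $\Id_{C^{\star k}}\star\big((\e\star\Id_C)\circ\Delta_L\big)=\Id$ by the defining property of $\Delta_L$, or it contracts one of the first $k$ factors; these latter terms lie to the left of the insertion, hence commute with it, and cancel the terms of $h_{k-1}\circ d$ by the usual simplicial identities. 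In top degree $h\circ d=0$ and $d\circ h_0=(\e\star\Id_C)\circ\Delta_L=\Id_C$ directly. Choosing the signs of the $h_k$ as in the classical augmented bar complex makes all terms cancel except the identity, so $\Id_{\AB_C\star C}\simeq 0$, and the symmetric argument gives $C\star\AB_C\simeq 0$.

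The only subtle points are bookkeeping, and I expect them to be the bulk of the actual write-up rather than any real obstacle: one must insert $\Delta_L$ (not $\Delta_R$) on this side, which is forced by which half of the split factor the subsequent differential contracts, and one must track the signs coming from the mapping-cone shift and the Koszul rule so that the surviving identity term has coefficient $+1$. This is exactly the ``extra degeneracy'' mechanism that makes classical bar resolutions contractible; and since $\AB_C\star C$ is bounded above, the family $(h_k)_k$ is a legitimate degree $-1$ endomorphism in $\Ch^-(\AS)$ with no convergence concerns.
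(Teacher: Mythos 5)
Your proposal is correct and is essentially the paper's own argument: an explicit contracting homotopy given by inserting a one-sided inverse of the counit at the tensor factor the differential never touches, with the identity term produced by the defining relation of that inverse and the remaining terms cancelling by the simplicial identities. The only cosmetic difference is that you contract $\AB_C\star C$ using $\Delta_L$ (a right inverse of $\e\star\Id_C$) whereas the paper contracts $C\star\AB_C$ using $\Delta_R$ (a right inverse of $\Id_C\star\e$); each side is the mirror image of the other.
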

\begin{proof}
Below we define an explicit homotopy which realizes $C\star \AB_{C}\simeq 0$.  First, let $\Delta_R:C\rightarrow C\star C$ be a right inverse to $\Id_C\star \e: C\star C\rightarrow C\star \one\cong C$,  which exists since $\e:C\rightarrow \one$ is a counit.

For each $k\geq 0$ let $h^k: C^{\star k+1}\rightarrow C^{\star k+2}$ denote $\Delta_R\star \Id^{\star k}$, and let $d^{k}:C^{\star k+1}\rightarrow C^{\star k}$ denote $\sum_{i=1}^k (-1)^{i-1} \Id^{\star i} \star \e\star \Id^{\star k-i}$.  Then $d^k$ are the components of the differential of $C\star \AB_{C}$.  It is straightforward to verify that 
\[
d^{k+1}\circ h^k + h^{k-1}\circ d^k = \Id_{C^{\star k+1}}
\]
for all $k\geq 0$, where $h^{-1}=0$ by convention.  This shows that $C\star \AB_C\simeq 0$.  The proof  that $\AB_C\star C\simeq 0$ is similar.
\end{proof}

Our first main result states that $(\PB_C,\AB_C)$ are complementary idempotents in $\KC^{-}(\AS)$.

Before stating, recall that an object $X\in \AS$ is left (rep.~right) $C$-projective iff $X$ is in the thick right (resp.~left) tensor ideal generated by $C$, by Lemma \ref{lemma:X leq C}.
\begin{thm}
\label{thmA}
Let $(C,\e)$ be a counital object in $\AS$. Then: 
\begin{enumerate}
\item  $\PB_C$ is an idempotent coalgebra in $\KC^-(\AS)$ and $\AB_C$ is the complementary idempotent algebra.
\item $X\in \KC^-(\AS)$ satisfies $\PB_C\star X\simeq X$, equivalently $\AB_C\star X\simeq 0$, if and only if $X$ is homotopy equivalent to a complex of left $C$-projectives.
\item $X\in \KC^-(\AS)$ satisfies $X\star \PB_C \simeq X$, equivalently $X\star \AB_C\simeq 0$, if and only if $X$ is homotopy equivalent to a complex of right $C$-projectives.
\item $\PB_C$ and $\AB_C$ are uniquely characterized up to homotopy equivalence by:
\begin{enumerate}
\item[(U1)] there is a homotopy equivalence $\one\simeq (\AB_C\rightarrow \PB_C)$.
\item[(U2)] $\AB_C\star C\simeq 0$.
\item[(U3)] $\PB_C$ is a complex of left $C$-projectives.
\end{enumerate}
\end{enumerate}
\end{thm}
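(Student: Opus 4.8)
The strategy is to establish parts (1)–(3) more or less simultaneously, using the explicit homotopies already constructed, and then deduce the uniqueness statement (4) formally from the abstract theory of idempotent coalgebras developed in \S\ref{ss:idemp alg and coalg}. First I would prove (1): by Lemma \ref{lemma:A kills C} we already know $\AB_C\star C\simeq 0\simeq C\star \AB_C$. Since $\PB_C$ is, in each homological degree, a tensor power $C^{\star r+1}$ (for $r\geq 0$), each chain group of $\PB_C$ is right and left $C$-projective, hence lies in the thick left (resp.\ right) tensor ideal generated by $C$ (Lemma \ref{lemma:X leq C}). It follows that $\AB_C\star \PB_C$ and $\PB_C\star \AB_C$ are built, via iterated mapping cones and shifts, from complexes of the form $\AB_C\star C^{\star r+1}$ and $C^{\star r+1}\star \AB_C$, each of which is contractible by Lemma \ref{lemma:A kills C}; using the fact (Observation in \S\ref{ss:idemp alg and coalg}, item (2)) that the class of $X$ with $\AB_C\star X\simeq 0$ is closed under cones, shifts, and direct sums, one concludes $\PB_C\star \AB_C\simeq 0\simeq \AB_C\star \PB_C$. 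Together with the defining triangle $\AB_C=\Cone(\PB_C\to\one)$, this says precisely that $\one\simeq(\AB_C\to\PB_C)$ is a $2$-step idempotent decomposition of identity, so the cited Observation endows $\PB_C$ with an idempotent coalgebra structure (counit the given map $\boldsymbol\e:\PB_C\to\one$) and $\AB_C$ with the complementary idempotent algebra structure.

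Next, for (2) and (3): by the Observation relating the two sides, $\PB_C\star X\simeq X\iff \AB_C\star X\simeq 0$. If $X$ is homotopy equivalent to a complex of left $C$-projectives, then $X$ lies in the smallest class containing the objects $C\star Z$ ($Z\in\AS$) and closed under cones, shifts, summands, and homotopy equivalence; since $\AB_C\star(C\star Z)\simeq 0$ by Lemma \ref{lemma:A kills C}, the same Observation gives $\AB_C\star X\simeq 0$. Conversely, if $\AB_C\star X\simeq 0$ then from the decomposition $\one\simeq(\AB_C\to\PB_C)$ we get $X\simeq \PB_C\star X$; but $\PB_C$ is a bounded-above complex whose chain groups are left $C$-projective, so $\PB_C\star X$ is (after the usual convolution/totalization of a bounded-above double complex) homotopy equivalent to a complex of left $C$-projectives — this requires the mild observation that, working in $\Ch^-(\AS)$, the total complex of $\PB_C\star X$ is a complex whose terms are finite direct sums $\bigoplus C^{\star r+1}\star X^j$, each left $C$-projective. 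Part (3) is identical with left/right interchanged, using $C\star\AB_C\simeq 0$.

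Finally, for (4): properties (U1), (U2), (U3) hold for the pair $(\PB_C,\AB_C)$ by parts (1)–(3) just proved (with (U1) from (1), (U2) from Lemma \ref{lemma:A kills C}, and (U3) by inspection of the chain groups). For uniqueness, suppose $(\PB',\AB')$ is another pair satisfying (U1)–(U3). From (U1), $\AB'$ is an idempotent algebra and $\PB'$ the complementary idempotent coalgebra. Property (U2) for $\PB'$ says $C$ lies in the class $\{X:\AB'\star X\simeq 0\}=\{X:\PB'\star X\simeq X\}$; since that class is closed under cones, shifts, and summands, and since $\PB_C$ is a bounded-above complex of objects of the form $C^{\star r+1}$, we get $\PB'\star\PB_C\simeq\PB_C$. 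Symmetrically, property (U3) for $\PB'$ — that $\PB'$ is a complex of left $C$-projectives — combined with part (2) (already proved for $\PB_C$) gives $\PB_C\star\PB'\simeq\PB'$. Now $\PB'\star\PB_C\simeq\PB_C$ and $\PB_C\star\PB'\simeq\PB'$, and by Proposition \ref{prop:idemp coalg order} applied to the idempotent coalgebras $\PB_C,\PB'$ (items (2) and (3) there), each of these equivalences is equivalent to $\PB_C\leq\PB'$ and to $\PB'\leq\PB_C$ respectively; hence $\PB_C\simeq\PB'$, and then $\AB_C\simeq\Cone(\PB_C\to\one)\simeq\AB'$.

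\textbf{Main obstacle.} The only genuinely delicate point is the ``conversely'' direction of (2) (equivalently the totalization argument): one must check that tensoring a bounded-above complex $X$ with the bounded-above complex $\PB_C$ of left $C$-projectives yields something again homotopy equivalent to a complex of left $C$-projectives, i.e.\ that the relevant infinite direct sums/convolutions stay inside $\Ch^-(\AS)$ and that no completeness issues arise. This is routine because the grading on $\PB_C$ is bounded above and the star product of bounded-above complexes is bounded above, so in each total degree only finitely many summands $C^{\star r+1}\star X^j$ appear; still, it is the step where the bounded-above hypothesis is really used, and it deserves a careful sentence rather than being waved through. Everything else is a formal consequence of Lemma \ref{lemma:A kills C}, Lemma \ref{lemma:X leq C}, and the Observations and Proposition \ref{prop:idemp coalg order} of \S\ref{ss:idemp alg and coalg}.
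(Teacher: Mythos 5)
Your overall strategy coincides with the paper's (orthogonality from Lemma \ref{lemma:A kills C} together with the $C$-projectivity of the chain groups of $\PB_C$, then uniqueness via the order on idempotent coalgebras), but there is one genuine gap, and it is not the step you single out as the ``main obstacle.'' The delicate point is the passage from ``each chain group of $X$ is annihilated by $\AB_C$'' to ``$X$ is annihilated by $\AB_C$'' when $X$ is an \emph{infinite} bounded-above complex --- which is exactly the situation for $X=\PB_C$ in part (1), for a general complex of left $C$-projectives in the forward direction of part (2), and for $\PB_C$ again in your uniqueness argument. You justify this by appealing to the Observation that $\{X:\AB_C\star X\simeq 0\}$ is closed under mapping cones, suspensions and direct sums; but that closure only yields the complexes obtainable from the chain groups by \emph{finitely many} iterated cones, i.e.\ bounded complexes. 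An unbounded-below complex is an infinite iterated extension, and a class closed under finite cones need not contain it. The paper bridges exactly this point by writing $X=\tw_\d(X')$ with $X'$ the direct sum of the chain groups (zero differential), observing $\AB_C\star X'\simeq 0$, and then invoking homological perturbation (Corollary 4.7 of \cite{Hog-CHPT-pp}) to transfer the contraction across the twist; the bounded-above hypothesis is what makes the perturbation converge. Some such argument (or an equivalent mapping-telescope argument) must be supplied --- without it parts (1) and (2) are not proved. By contrast, the step you flag as delicate, that $\PB_C\star X$ totalizes to a bounded-above complex with left $C$-projective chain groups, is indeed routine, as you say.

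A second, smaller issue is in (4): to invoke Proposition \ref{prop:idemp coalg order} you need $\PB'$ to be an idempotent coalgebra, and you assert this follows ``from (U1).'' It does not: a $2$-step idempotent decomposition also requires the orthogonality $\AB'\star\PB'\simeq 0\simeq\PB'\star\AB'$. One side, $\AB'\star\PB'\simeq 0$, does follow from (U2) and (U3) (each chain group $Y$ of $\PB'$ is a retract of $C\star Y$ and $\AB'\star C\simeq 0$), again modulo the infinite-complex point above; the other side requires a separate argument and does not follow from a left-handed hypothesis alone. The paper's own proof of uniqueness works entirely with the annihilation statements of Lemma \ref{lemma:Cannihilation}, establishing $\AB'\star\AB_C\simeq\AB'$ and $\AB'\star\AB_C\simeq\AB_C$ directly rather than passing through the coalgebra order; you should either follow that route or prove the missing orthogonality before citing Proposition \ref{prop:idemp coalg order}.
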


Before proving we need a technical lemma.

\begin{lemma}\label{lemma:Cannihilation}
Suppose $\AB',\PB'\in \KC^-(\AS)$ satisfy (U1), (U2), (U3) from Theorem \ref{thmA}.  Then $X\in \KC^-(\AS)$ satisfies $\AB'\star X\simeq 0$ if and only if $X$ is homotopy equivalent to a complex of left $C$-projectives.  There is a similar statement for right $C$-projectives.
\end{lemma}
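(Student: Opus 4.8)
\textbf{Proof strategy for Lemma \ref{lemma:Cannihilation}.}

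The plan is to prove both implications of the equivalence ``$\AB'\star X\simeq 0 \iff X$ is homotopy equivalent to a complex of left $C$-projectives'' separately, exploiting the hypotheses (U1)--(U3) together with the structural facts about idempotent decompositions already recorded in the observations of \S\ref{ss:idemp alg and coalg}. The starting point is the remark immediately preceding the lemma: by (U1) the pair $(\AB',\PB')$ constitutes a $2$-step idempotent decomposition of $\one$, so $\AB'$ is an idempotent algebra, $\PB'$ is an idempotent coalgebra, $\PB'\star\AB'\simeq 0\simeq\AB'\star\PB'$, and Observation 3.20 (the one asserting $\PB\star X\simeq X \iff \AB\star X\simeq 0$) applies. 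Thus $\AB'\star X\simeq 0$ is equivalent to $\PB'\star X\simeq X$, and the whole lemma reduces to showing: $\PB'\star X\simeq X$ if and only if $X$ is homotopy equivalent to a complex of left $C$-projectives.

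For the ``only if'' direction, suppose $\PB'\star X\simeq X$. By (U3), $\PB'$ is itself a bounded-above complex whose chain objects are left $C$-projective; since left $C$-projective objects form a right tensor ideal (the dual of the remark after Definition \ref{def:Cproj}, i.e.\ $X$ left $C$-projective $\Rightarrow$ $X\star Y$ left $C$-projective — and by Lemma \ref{lemma:X leq C} this ideal is thick in the appropriate sense), the chain objects of $\PB'\star X$ are finite direct sums of objects of the form $(\text{left }C\text{-projective})\star X^j$, hence are left $C$-projective. Therefore $\PB'\star X$ is a complex of left $C$-projectives, and $X\simeq \PB'\star X$ exhibits $X$ as homotopy equivalent to such a complex. (One should be slightly careful that ``complex of left $C$-projectives'' is stable under homotopy equivalence only up to passing to such a representative, which is exactly what is being claimed.)

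For the ``if'' direction, suppose $X\simeq Y$ where $Y\in\Ch^-(\AS)$ has all chain objects left $C$-projective. I want to show $\PB'\star Y\simeq Y$, equivalently $\AB'\star Y\simeq 0$. The key input is (U2): $\AB'\star C\simeq 0$. First, for any single left $C$-projective object $Z$, we have $Z\leq_R C$ by Lemma \ref{lemma:X leq C}, so $Z$ is a retract of $C\star W$ for some $W\in\AS$; then $\AB'\star Z$ is a retract of $\AB'\star C\star W\simeq 0$, hence $\AB'\star Z\simeq 0$. Now for the complex $Y$: the class of complexes $V$ with $\AB'\star V\simeq 0$ is closed under mapping cones, suspension, and arbitrary direct sums (this is Observation 3.21(2), applied to the idempotent decomposition $(\AB',\PB')$, or one checks it directly from the fact that $\AB'\star(-)$ commutes with these operations up to homotopy). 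A bounded-above complex $Y$ with chain objects in a class closed under these operations — more precisely, a complex built from its chain objects by iterated cones/shifts, taking a homotopy colimit over the stupid truncations $\sigma_{\geq -n}Y$ — lies in that class; for bounded-above complexes over an additive category one can run this as an induction on ``length'' if $Y$ is bounded, and pass to the bounded-above case by a standard telescope/Mittag-Leffler argument (or simply note $\Ch^-$ here means bounded above, so truncating from below gives bounded complexes converging to $Y$, and $\AB'\star(-)$ preserves the relevant limit up to homotopy). Hence $\AB'\star Y\simeq 0$, so $\PB'\star Y\simeq Y\simeq X$, giving $\PB'\star X\simeq X$. The statement for right $C$-projectives follows by the symmetric argument using the right-handed versions of (U2)/(U3) and Lemma \ref{lemma:X leq C}, or equivalently by passing to $\AS^{\mathrm{op}}$.

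\textbf{Main obstacle.} The routine manipulations (retracts, ideals, the Observation 3.21 closure properties) are easy; the one genuine point requiring care is the passage from ``each chain object is annihilated by $\AB'\star(-)$'' to ``the whole bounded-above complex is annihilated.'' For an unbounded-above complex this could fail, but since $\PB',\AB'\in\KC^-(\AS)$ and $X\in\KC^-(\AS)$, tensoring with $\AB'$ commutes with the homotopy colimit of the (bounded) below-truncations, so the induction on bounded complexes suffices and the limit causes no trouble. I expect this truncation/convergence bookkeeping to be the only place where one must be attentive rather than mechanical.
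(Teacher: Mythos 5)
Your argument is essentially the paper's: the retract argument for a single left $C$-projective object via (U2), the reduction of a bounded-above complex to its chain objects, and the converse via $0\simeq \Cone(\PB'\rightarrow\one)\star X\cong\Cone(\PB'\star X\rightarrow X)$ are exactly the steps the paper takes. (For the chain-objects-to-complex step the paper writes $X=\tw_\d(X')$ with $X'$ the direct sum of chain objects and invokes homological perturbation, rather than your truncation-plus-telescope argument; both work here because everything is bounded above, though the perturbation route stays inside $\KC^-(\AS)$ and avoids the infinite direct sums a mapping telescope requires.)

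One logical slip should be flagged. You open by asserting that (U1) alone makes $(\AB',\PB')$ a $2$-step idempotent decomposition and then import the Observations of \S\ref{ss:idemp alg and coalg} wholesale. By Definition \ref{def:idemp alg}, a $2$-step idempotent decomposition also requires $\PB'\star\AB'\simeq 0\simeq\AB'\star\PB'$, which is not among (U1)--(U3); in fact establishing that orthogonality is precisely what Lemma \ref{lemma:Cannihilation} is used for in the proof of Theorem \ref{thmA}, so the citation is circular as written. Fortunately the only implication you actually use from that package is $\AB'\star X\simeq 0\Rightarrow\PB'\star X\simeq X$, which follows from (U1) alone: $X\simeq\one\star X\simeq(\AB'\star X\rightarrow\PB'\star X)$, and contracting the contractible piece $\AB'\star X$ leaves $\PB'\star X$. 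In the other direction you prove $\AB'\star Y\simeq 0$ directly, so the suspect converse implication is never needed. With that one citation replaced by the cone argument, your proof is complete and matches the paper's.
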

\begin{proof}
First, suppose $X\in \AS$ is left $C$-projective.  Then $X$ is a retract of $C\star X$ by definition, hence $\AB'\star X$ is a retract of $ \AB'\star C\star X$, which is contractible by hypothesis.  This shows that $\AB'\star X\simeq 0$ when $X$ is left $C$-projective.

Now, suppose $X$ is a complex of left $C$-projectives.  Let $X':=\bigoplus_k X^k[-k]$ be the direct sum of the chain objects of $X$.  Then $X'$ is $X$ with zero differential, and conversely $X$ can be regarded as a twist of $X'$:
\[
X \ = \ \tw_\d(X').
\]
Each chain object satisfies $\AB'\star X^k\simeq 0$ by the above paragraph, so it follows that $\AB'\star X'\simeq 0$.  Then $\AB'\star X\simeq 0$ by standard homological perturbation techniques (see Corollary 4.7 with curvature $z=0$ in \cite{Hog-CHPT-pp}).

Now, conversely, suppose $\AB'\star X$.  Then
\[
0\simeq \Cone(\PB'\rightarrow \one)\star X \cong \Cone(\PB'\star X\rightarrow \one\star X),
\]
which implies that $X\simeq \PB'\star X$, which is a complex of left $C$-projectives.  This proves the lemma (the statement about right $C$-projectives follows by symmetry).
\end{proof}

\begin{proof}[Proof of Theorem \ref{thmA}]
Given that $\AB_C\star C\simeq 0\simeq C\star \AB_C$ (Lemma \ref{lemma:A kills C}) and $\PB_C$ is a complex of left and right $C$-projectives, Lemma \ref{lemma:Cannihilation} tells us that $\AB_C\star \PB_C\simeq 0\simeq \PB_C\star \AB_C$.  Statement (1) follows, since $\one\simeq (\AB_C\rightarrow \PB_C)$ (recall the discussion in \S \ref{ss:idemp alg and coalg}).

%

Statements (2) and (3) are immediate consequences of Lemma \ref{lemma:Cannihilation}.

For the uniqueness statement (4), suppose $\PB'$ and $\AB'$ satisfy (U1), (U2), (U3).  Then $\AB'\star \PB_C\simeq 0$ by Lemma \ref{lemma:Cannihilation} applied to $\AB'$.  This implies $\AB'\star \AB_C\simeq \AB'$.  Applying the right $C$-projective version of Lemma \ref{lemma:Cannihilation} to $\AB_C$ yields $\PB'\star \AB_C\simeq 0$, hence $\AB'\star \AB_C\simeq \AB_C$.  This shows $\AB_C\simeq \AB'$; a similar argument shows $\PB_C\simeq \PB'$.
\end{proof}

\begin{remark}
The complexes $\PB_C$ and $\AB_C$ depend only on the thick left (or right) tensor ideal generated by $C$, up to homotopy equivalence by part (2) of the above theorem and Corollary \ref{cor:uniqueness}.  In particular the particular choice of counit $\e:C\rightarrow \one$ is irrelevant.  
\end{remark}

\begin{remark}\label{rmk:P and A}
The notation for $\PB_C$ and $\AB_C$ is meant to remind the reader that certain kinds of projective resolutions occur as special cases of $\PB_C\rightarrow \one$, and in such cases $\AB_C=\Cone(\PB_C\rightarrow \one)$ is the associated acyclic complex (the cone of a quasi-isomorphism).\footnote{Note that $\AS$ is usually not assumed to be an abelian category, only an additive category, so the notions of homology, quasi-isomorphism, and acyclic complexes are not typically defined.  Even when $\AS$ is abelian, we work in a setting where acyclic complexes (i.e.~exact sequences) are not generally isomorphic to zero, but contractible complexes (i.e.~split exact sequences) are.}

An alternate mnemonic: $\PB_C$ \underline{p}reserves $C$ in the sense that $\PB_C\star C\simeq C\simeq C\star \PB_C$ while $\AB_C$ \underline{a}nnihilates $C$.
\end{remark}


\begin{example}\label{ex:bimod}
Let $A$ is a $\k$-algebra, and let $\AS$ denote the category of $A,A$-bimodules with monoidal product $\star = \otimes_A$ and monoidal identity $\one = A$.  Then $C:=A\otimes_\k A$ is a coalgebra object with counit given by the multiplication map $C=A\otimes_\k A \rightarrow A  = \one$ and comultiplication given by the insertion of 1:
\[
C = A\otimes_\k A \rightarrow A\otimes_\k A\otimes_\k A = C^{\star 2},\qquad\qquad a\otimes b\mapsto a\otimes 1\otimes b.
\]
Then $C^{\otimes i} = A^{\otimes_\k i+1}$ and  $\PB_C$ is the usual two-sided bar complex associated to $A$:
\[
\PB_{A\otimes_\k A} \ \ = \ \ 
\begin{tikzpicture}[baseline=-.3em]
\node (a) at (0,0) {$\underline{A\otimes_\k A}$};
\node (b) at (-3,0) {$A\otimes_\k A\otimes A$};
\node (c) at (-6,0) {$A^{\otimes_\k 4}$};
\node (d) at (-9,0) {$\cdots$};
\path[->,>=stealth',shorten >=1pt,auto,node distance=1.8cm]
(b) edge node {} (a)
(c) edge node {} (b)
(d) edge node {} (c);
\end{tikzpicture}
\] and $\AB_{A\otimes_\k A}=\Cone(\PB_{A\otimes_\k A}\rightarrow A)$ is the cone of a quasi-isomorphism.  When $A$ is projective over $\k$, the natural map $\PB_{A\otimes_\k A}\rightarrow A$ is a resolution of $A$ by projective bimodules.
\end{example}

\begin{example}\label{ex:Hopf}
Let $H$ be a Hopf algebra (or bialgebra) over $\k$, and let $\AS$ be the category of $H$-modules, with the monoidal product $\star = \otimes_\k$ and monoidal identity $\one = \k$ (and $H$-actions defined using the comultiplication and counit of $H$, respectively).  Then $C:=H$ is a coalgebra object in $\AS$.  If $H$ is projective as a $\k$-module then $\PB_H\rightarrow \k$ is a resolution of $\k$ by projective $H$-modules (exercise), and $\AB_H = \Cone(\PB_H\rightarrow \k)$ is the cone of this quasi-isomorphism.
\end{example}

The following examples formed the main motivation for this work.

\begin{example}\label{ex:JW}
Let $\TLC_n$ denote the categorification of the Temperley-Lieb algebra $\TL_n$ given by projective modules over Khovanov's ring $H^n$ \cite{Kh02} or, equivalently, Bar-Natan's category of $n,n$-tangles in a rectangle \cite{B-N05}. Let $U_i\in \TLC_n$ denote the object corresponding to the cup / cap tangle\vskip7pt
\[
U_i \ \ = \ \ \begin{minipage}{1.5in}
\labellist
\small
\pinlabel $1$ at 0 -6
\pinlabel $\cdots$ at 15 25
\pinlabel $i$ at 50 -6
\pinlabel $i+1$ at 70 -6
\pinlabel $\cdots$ at 100 25
\pinlabel $n$ at 115 -6
\endlabellist
\includegraphics[scale=.9]{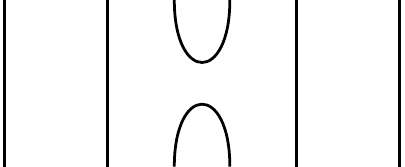}
\end{minipage}\ , \qquad\quad 1\leq i\leq n-1.
\]\vskip10pt
\noindent Then each $U_i$ is a coalgebra object in $\TLC_n$ (after a grading shift).  The direct sum of coalgebras is a coalgebra, hence the direct sum $C:=U_1\oplus \cdots \oplus U_{n-1}$ is a coalgebra.  The corresponding unital idempotent $P_n:=\AB_C\in \KC^-(\TLC_n)$ is the categorified Jones-Wenzl projector of Cooper-Krushkal and Rozansky \cite{CK12a,Roz10a}, up to homotopy equivalence.
\end{example}

The ease with which $\AB_C$ can be constructed should be contrasted with the intricate arguments for constructing categorified Jones-Wenzl projectors in \cite{CK12a}.  We should remark however, that the construction in \cite{CK12a} accomplishes more than can be seen from the general construction, in the form of a beautiful recursion which expresses $P_n\in \KC^-(\TLC_n)$ as an infinite twisted complex involving $P_{n-1}$.  This recursion was exploited in \cite{quasiPaper} to investigate the dg algebra of endomorphisms of $P_n$.

\begin{example}\label{ex:generalized JW}
One can recover \emph{all} the categorified symmetrizers (which might also be referred to as antisymmetrizers, depending on one's mood) that have appeared in literature over the years as special cases of $\AB_C$.  Examples include Rose's $\sl_3$ projectors \cite{Rose12}, Cautis' $\sl_n$ clasps \cite{CauClasp}, and the categorified Young symmetrizers \cite{HogSym-GT}.  These latter complexes are special cases of the so-called antispherical projectors; see below.
\end{example}

\begin{example}\label{ex:antispherical}
Let $W$ be a Coxeter group with finite set of simple reflections $S$, and let $\SBim(W)$ be the category of Soergel bimodules for $W$ (or its diagrammatic version, due to Elias-Williamson \cite{EWsoergelCalc}).  Let $B_s$ be the indecomposable bimodule assocated to a simple reflection $s\in S$.  Then $B_s(-1)$ is a coalgebra in $\SBim(W)$ (while $B_s(1)$ is an algebra).  If we let $C= \bigoplus_{s\in S} B_s(-1)$ then $\AB_C$ is the \emph{categorified antispherical projector}, constructed for finite $W$ in Libedinsky-Williamson \cite{LibWilIdemp-pp} using infinite powers of the ``full twist'' Rouquier complex.  Note that our construction of anti-spherical projectors makes sense for infinite Coxeter groups.
\end{example}

\subsection{Relative idempotents}
\label{ss:partial order}
Let $(C_1,\e_1)\leq (C_2,\e_2)$ be counital objects in $\AS$ (where $\leq$ is as in Definition \ref{def:counit order}).  Let $\nu:C_1\rightarrow C_2$ be a map such that $\e_2\circ \nu = \e_1$.  Then we have a chain map $\phi:\PB_{C_1}\rightarrow \PB_{C_2}$ pictured as follows:
\begin{equation*}\label{eq:connectingmap}
\begin{tikzpicture}[baseline=-2.8em]
\tikzstyle{every node}=[font=\small]
\node (a) at (0,0) {$C_2$};
\node (b) at (-2.5,0) {$C_2^{\star 2}$};
\node (c) at (-5,0) {$C_2^{\star 3}$};
\node (d) at (-7.5,0) {$\cdots$};
\node (e) at (0,-2) {$C_1$};
\node (f) at (-2.5,-2) {$C_1^{\star 2}$};
\node (g) at (-5,-2) {$C_1^{\star 3}$};
\node (h) at (-7.5,-2) {$\cdots$};
\node (i) at (-10,0) {$\PB_{C_2}$};
\node (j) at (-10,-2) {$\PB_{C_1}$};
\node at (-8.8,0) {$=$};
\node at (-8.8,-2) {$=$};
\path[->,>=stealth',shorten >=1pt,auto,node distance=1.8cm]
(j) edge node[left] {$\phi$} (i)
(b) edge node[right] {} (a)
(c) edge node[right] {} (b)
(d) edge node[right] {} (c)
(f) edge node[right] {} (e)
(g) edge node[right] {} (f)
(h) edge node[right] {} (g)
(e)  edge node[right] {$\nu$} (a)
(f)  edge node[right] {$\nu^{\star 2}$} (b)
(g)  edge node[right] {$\nu^{\star 3}$} (c);
\end{tikzpicture}
\end{equation*}
This exends to a map of cones $\psi:\AB_{C_1}\rightarrow \AB_{C_2}$:
\begin{equation*}\label{eq:connectingmap2}
\begin{tikzpicture}[baseline=-2.8em]
\tikzstyle{every node}=[font=\small]
\node (z) at (2.5,0) {$\one$};
\node (a) at (0,0) {$C_2$};
\node (b) at (-2.5,0) {$C_2^{\star 2}$};
\node (c) at (-5,0) {$C_2^{\star 3}$};
\node (d) at (-7.5,0) {$\cdots$};
\node (zz) at (2.5,-2) {$\one$};
\node (e) at (0,-2) {$C_1$};
\node (f) at (-2.5,-2) {$C_1^{\star 2}$};
\node (g) at (-5,-2) {$C_1^{\star 3}$};
\node (h) at (-7.5,-2) {$\cdots$};
\node (i) at (-10,0) {$\AB_{C_2}$};
\node (j) at (-10,-2) {$\AB_{C_1}$};
\node at (-8.8,0) {$=$};
\node at (-8.8,-2) {$=$};
\path[->,>=stealth',shorten >=1pt,auto,node distance=1.8cm]
(j) edge node[left] {$\psi$} (i)
(b) edge node[right] {} (a)
(e) edge node[right] {} (zz)
(a) edge node[right] {} (z)
(c) edge node[right] {} (b)
(d) edge node[right] {} (c)
(f) edge node[right] {} (e)
(g) edge node[right] {} (f)
(h) edge node[right] {} (g)
(zz) edge node[right] {$\Id$} (z)
(e)  edge node[right] {$\nu$} (a)
(f)  edge node[right] {$\nu^{\star 2}$} (b)
(g)  edge node[right] {$\nu^{\star 3}$} (c);
\end{tikzpicture}
\end{equation*}

\begin{lemma}\label{lemma:relprojector}
We have
\[
\Cone(\AB_{C_1}\rightarrow \AB_{C_2})[-1] \ \simeq \ \Cone(\PB_{C_1}\rightarrow \PB_{C_2}) \ \simeq \  \PB_{C_2}\star \AB_{C_1} \ \simeq \  \AB_{C_1}\star \PB_{C_2}.
\]
If $\EB$ denotes any of the equivalent complexes above, then $\EB$ satisfies the additional properties
\begin{itemize}
\item[(a)] $\EB\star \PB_{C_1}\simeq 0 \simeq \PB_{C_1}\star \EB$, 
\item[(b)] $\EB\star \PB_{C_2} \simeq \EB\simeq \PB_{C_2}\star \EB$,
\item[(c)] $\EB\star \EB\simeq \EB$.
\end{itemize}
\end{lemma}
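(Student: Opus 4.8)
The plan is to derive the whole statement formally from three earlier inputs: Proposition~\ref{prop:idemp coalg order} (interaction of nested idempotent coalgebras), the complementary-idempotent relations of Theorem~\ref{thmA}(1), and the fact recorded in the remark following Definition~\ref{def:idemp alg} that an idempotent coalgebra $(\PB,\boldsymbol{\e})$ satisfies $\boldsymbol{\e}\star\Id_\PB\simeq\Id_\PB\star\boldsymbol{\e}$. I would organize it in three steps.

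\emph{Step 1.} I would first observe that $\PB_{C_1}\le\PB_{C_2}$ as counital objects of $\KC^-(\AS)$: by Theorem~\ref{thmA}(1) each $\PB_{C_i}$ is an idempotent coalgebra with counit the chain map $\boldsymbol{\e}_i\colon\PB_{C_i}\to\one$ that is $\e_i$ in degree zero, and the map $\phi$ displayed above satisfies $\boldsymbol{\e}_2\circ\phi=\boldsymbol{\e}_1$ strictly (in degree zero this is $\e_2\circ\nu=\e_1$, and $\one$ is concentrated in degree zero). Proposition~\ref{prop:idemp coalg order} then gives $\PB_{C_2}\star\PB_{C_1}\simeq\PB_{C_1}\simeq\PB_{C_1}\star\PB_{C_2}$, and hence, taking cones along $\boldsymbol{\e}_2$ and using $\Cone(f)\star Z\cong\Cone(f\star\Id_Z)$, also $\AB_{C_2}\star\PB_{C_1}\simeq 0\simeq\PB_{C_1}\star\AB_{C_2}$.

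\emph{Step 2.} Write $\AB_{C_1}=\Cone(\PB_{C_1}\xrightarrow{\boldsymbol{\e}_1}\one)$. Then $\PB_{C_2}\star\AB_{C_1}\cong\Cone(\Id_{\PB_{C_2}}\star\boldsymbol{\e}_1\colon\PB_{C_2}\star\PB_{C_1}\to\PB_{C_2})$ and $\AB_{C_1}\star\PB_{C_2}\cong\Cone(\boldsymbol{\e}_1\star\Id_{\PB_{C_2}}\colon\PB_{C_1}\star\PB_{C_2}\to\PB_{C_2})$. In the first case I would fit $\Id_{\PB_{C_2}}\star\boldsymbol{\e}_1$ and $\phi$ into a square whose left vertical is $\boldsymbol{\e}_2\star\Id_{\PB_{C_1}}$ and right vertical is $\Id_{\PB_{C_2}}$; the left vertical is a homotopy equivalence because its cone is $\AB_{C_2}\star\PB_{C_1}\simeq 0$, and the square commutes up to homotopy by $\boldsymbol{\e}_2\circ\phi=\boldsymbol{\e}_1$ together with $\boldsymbol{\e}_2\star\Id_{\PB_{C_2}}\simeq\Id_{\PB_{C_2}}\star\boldsymbol{\e}_2$. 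Since in any triangulated category the cones of the two horizontal maps of such a square are homotopy equivalent, $\PB_{C_2}\star\AB_{C_1}\simeq\Cone(\phi)$; the second case is the mirror argument (left vertical $\Id_{\PB_{C_1}}\star\boldsymbol{\e}_2$, cone $\PB_{C_1}\star\AB_{C_2}\simeq 0$), giving $\AB_{C_1}\star\PB_{C_2}\simeq\Cone(\phi)$. Finally $\psi$ was built as the map on cones induced by the commuting square $(\phi,\Id_\one)\colon(\PB_{C_1}\to\one)\to(\PB_{C_2}\to\one)$; forming cones in the vertical direction first, the same data computes $\Cone(\Cone(\phi)\to\Cone(\Id_\one))\simeq\Cone(\phi)[1]$---concretely one Gaussian-eliminates the two copies of $\one$ that occur linked by an identity map---so $\Cone(\AB_{C_1}\to\AB_{C_2})[-1]=\Cone(\psi)[-1]\simeq\Cone(\phi)$.

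\emph{Step 3.} Set $\EB:=\PB_{C_2}\star\AB_{C_1}$; by Step~2 also $\EB\simeq\AB_{C_1}\star\PB_{C_2}\simeq\Cone(\phi)$. Using associativity of $\star$ and the relations $\AB_{C_1}\star\PB_{C_1}\simeq 0\simeq\PB_{C_1}\star\AB_{C_1}$, $\AB_{C_1}\star\AB_{C_1}\simeq\AB_{C_1}$, $\PB_{C_2}\star\PB_{C_2}\simeq\PB_{C_2}$ (Theorem~\ref{thmA}(1) and its idempotent-algebra analogue): (a) is $\PB_{C_2}\star\AB_{C_1}\star\PB_{C_1}\simeq 0$ and $\PB_{C_1}\star\AB_{C_1}\star\PB_{C_2}\simeq 0$; (b) is $\AB_{C_1}\star\PB_{C_2}\star\PB_{C_2}\simeq\EB$ and $\PB_{C_2}\star\PB_{C_2}\star\AB_{C_1}\simeq\EB$; and (c) follows from $\EB\star\EB\simeq\PB_{C_2}\star(\AB_{C_1}\star\AB_{C_1})\star\PB_{C_2}\simeq\PB_{C_2}\star\AB_{C_1}\star\PB_{C_2}\simeq\EB\star\PB_{C_2}\simeq\EB$ using (b). The one genuinely non-formal point is the matching in Step~2 of the abstract connecting maps with the explicit $\phi$ and $\psi$---this is where $\boldsymbol{\e}\star\Id\simeq\Id\star\boldsymbol{\e}$ is needed and where one must track unitors and the Koszul sign rule carefully; everything else is a direct consequence of Proposition~\ref{prop:idemp coalg order} and Theorem~\ref{thmA}.
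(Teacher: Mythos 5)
Your argument is correct, and its overall skeleton (get the annihilation relations between the $\PB_{C_i}$ and $\AB_{C_j}$ first, identify the four complexes, then read off (a)--(c) from $\EB\simeq\PB_{C_2}\star\AB_{C_1}\simeq\AB_{C_1}\star\PB_{C_2}$) matches the paper's. The one place where you take a genuinely different route is the identification $\PB_{C_2}\star\AB_{C_1}\simeq\Cone(\phi)$. The paper sets $\EB:=\Cone(\phi)$, shows $\EB\star\PB_{C_1}\simeq 0$ directly from the presentation $\EB\simeq\Cone(\psi)[-1]$ (both $\AB_{C_1}$ and $\AB_{C_2}$ are killed by $\PB_{C_1}$), deduces $\EB\simeq\EB\star\AB_{C_1}$, and then computes $\EB\star\AB_{C_1}\simeq\Cone(\PB_{C_1}\star\AB_{C_1}\to\PB_{C_2}\star\AB_{C_1})\simeq\PB_{C_2}\star\AB_{C_1}$ by cancelling the contractible summand $\PB_{C_1}\star\AB_{C_1}$. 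You instead present $\PB_{C_2}\star\AB_{C_1}$ as $\Cone(\Id_{\PB_{C_2}}\star\boldsymbol{\e}_1)$ and compare it to $\Cone(\phi)$ via a homotopy-commutative square whose verticals are homotopy equivalences, invoking the triangulated-category axiom to identify the cones of the parallel maps. Both work; the trade-off is that your route requires the extra input $\boldsymbol{\e}_2\star\Id_{\PB_{C_2}}\simeq\Id_{\PB_{C_2}}\star\boldsymbol{\e}_2$ (stated only in a remark in the paper, with proof deferred to the references) plus a careful matching of the abstract connecting map with the explicit $\phi$, whereas the paper's route needs only the ``$X\star\PB\simeq 0$ iff $X\star\AB\simeq X$'' observation and a Gaussian elimination of a contractible summand, and never touches the commutation of left and right counits. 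Two small remarks: in Step 1 the cleanest justification for $\AB_{C_2}\star\PB_{C_1}\simeq 0$ is $\AB_{C_2}\star\PB_{C_1}\simeq\AB_{C_2}\star(\PB_{C_2}\star\PB_{C_1})\simeq 0$ rather than ``taking cones along $\boldsymbol{\e}_2$'' (the cone of $\boldsymbol{\e}_2\star\Id_{\PB_{C_1}}$ being contractible is what you are trying to show, not an immediate consequence of source and target being equivalent); and note that the paper obtains the same relations from Lemma \ref{lemma:counitorder} together with Theorem \ref{thmA}(2),(3) rather than from Proposition \ref{prop:idemp coalg order}, though these amount to the same content.
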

\begin{proof}
The equivalence $\Cone(\AB_{C_1}\rightarrow \AB_{C_2})[-1] \ \simeq \ \Cone(\PB_{C_1}\rightarrow \PB_{C_2})$ is the cancelation of the contractible summand $\Cone(\one\rightarrow \one)$.  Let $\EB:=\Cone(\PB_{C_1}\rightarrow \PB_{C_2})$.

Observe that, since $C_1 \leq C_2$ we have $C_1\leq_L C_2$ and $C_1\leq_R C_2$ by Lemma \ref{lemma:counitorder}, hence
\[
\PB_{C_1}\star \AB_{C_2}\ \simeq \ 0 \ \simeq \ \AB_{C_2}\star \PB_{C_1}
\]
by parts (2), (3) of Theorem \ref{thmA}. Since $\EB\simeq \Cone(\AB_{C_1}\rightarrow \AB_{C_2})[-1]$) and both $\AB_{C_1},\AB_{C_2}$ are annihilated by $\PB_{C_1}$, it follows that $\EB\star \PB_{C_1}\simeq 0\simeq \PB_{C_1}\star \EB$.

Since $\EB\star \PB_{C_1}\simeq 0$, it follows that $\EB\star \AB_{C_1} \simeq \EB$.  Using the expression $\EB\simeq \Cone(\PB_{C_1}\rightarrow \PB_{C_2})$ it follows that
\[
\EB\simeq \EB\star \AB_{C_1} \simeq \Cone(\PB_{C_1}\star \AB_{C_1}\rightarrow \PB_{C_2}\star \AB_{C_1}),
\]
which is homotopy equivalent to $\PB_{C_2}\star \AB_{C_1}$, after canceling the contractible summand $\PB_{C_1}\star \AB_{C_1}$.  A similar argument shows that $\EB\simeq \AB_{C_1}\star \PB_{C_2}$.

Properties (a), (b), (c) are now clear from $\EB\simeq \AB_{C_1}\star \PB_{C_2}\simeq \PB_{C_2}\star \AB_{C_1}$.
\end{proof}

In the statement below, $[C]$ denotes the equivalence class of a $C$ with respect to the the equivalence relation $C\sim C'$ if $C\leq C'$ and $C'\leq C$.
\begin{theorem}\label{thm:rel idemp thm}
Let $(C_i, \e_i)$ be counital objects in $\AS$ ($i=1,2$).  The following are equivalent:
\begin{enumerate}
\item $C_1\leq C_2$ (as counital objects in $\AS$)
\item $\PB_{C_1}\leq \PB_{C_2}$ (as counital objects in $\KC^-(\AS)$).
\item there exists a decomposition
\begin{equation*}\label{eq:rel decomp}
\PB_{C_2} \ \ \simeq \ \ \left(\EB\rightarrow \PB_{C_1}\right).
\end{equation*}
(notation as in \S \ref{ss:cxs}) where  $\EB\in \KC^-(\AS)$ satisfies $\EB\star C_1\simeq 0 \simeq C_1\star \EB$.
\end{enumerate}
Furthermore, if either of these conditions are satisfied, the complex $\EB$ from (3) is uniquely determined by  $[C_1]$ and $[C_2]$ up to homotopy equivalence, and satisfies the additional properties:
\begin{itemize}
\item[(a)] $\EB\star \PB_{C_1}\simeq 0 \simeq \PB_{C_1}\star \EB$, 
\item[(b)] $\EB\star \PB_{C_2} \simeq \EB\simeq \PB_{C_2}\star \EB$,
\item[(c)] $\EB\star \EB\simeq \EB$.
\end{itemize}
\end{theorem}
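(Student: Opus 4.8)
The plan is to prove the implications $(1)\Rightarrow(3)$, $(3)\Rightarrow(1)$, $(1)\Rightarrow(2)$, $(2)\Rightarrow(1)$, and then to deduce the uniqueness of $\EB$ and properties (a)--(c). Almost everything is bookkeeping on top of Theorem~\ref{thmA}, Lemma~\ref{lemma:A kills C}, Lemma~\ref{lemma:relprojector} and Lemma~\ref{lemma:counitorder}; the one genuinely new input is the following elementary observation. If $Z\in\Ch^-(\AS)$ has $Z^k=0$ for $k>0$ and $Z$ is homotopy equivalent to an object $X\in\AS$ placed in degree zero, then $X$ is a retract of $Z^0$. Indeed, a homotopy equivalence between $Z$ and $X$ restricts in degree zero to morphisms $X\to Z^0$ and $Z^0\to X$ whose composite is $\Id_X$, because a homotopy between that composite and $\Id_X$ would lie in $\Hom^{-1}_{\Ch(\AS)}(X,X)=0$.

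For $(1)\Rightarrow(3)$ we repackage Lemma~\ref{lemma:relprojector}: set $\EB:=\PB_{C_2}\star\AB_{C_1}\simeq\AB_{C_1}\star\PB_{C_2}\simeq\Cone(\PB_{C_1}\xrightarrow{\phi}\PB_{C_2})$, so that rotating the triangle $\PB_{C_1}\to\PB_{C_2}\to\EB\to\PB_{C_1}[1]$ identifies $\PB_{C_2}$ with a complex of the shape $(\EB\to\PB_{C_1})$ in the notation of \S\ref{ss:cxs}, while $\EB\star C_1\simeq\PB_{C_2}\star(\AB_{C_1}\star C_1)\simeq 0$ and $C_1\star\EB\simeq(C_1\star\AB_{C_1})\star\PB_{C_2}\simeq 0$ by Lemma~\ref{lemma:A kills C}. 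For $(3)\Rightarrow(1)$, tensor the equivalence $\PB_{C_2}\simeq(\EB\to\PB_{C_1})$ on the right by $C_1$: as $\EB\star C_1\simeq 0$ and $\PB_{C_1}\star C_1\simeq C_1$ by Theorem~\ref{thmA}(2) ($C_1$ being left $C_1$-projective), homological perturbation gives $\PB_{C_2}\star C_1\simeq C_1$. By the observation above $C_1$ is then a retract of $(\PB_{C_2}\star C_1)^0=C_2\star C_1$, so $C_1\leq_R C_2$, hence $C_1\leq C_2$ by Lemma~\ref{lemma:counitorder}.

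For $(1)\Rightarrow(2)$: a map $\nu\colon C_1\to C_2$ with $\e_2\circ\nu=\e_1$ induces the chain map $\phi\colon\PB_{C_1}\to\PB_{C_2}$ of \S\ref{ss:partial order} (compatibility with the differentials uses $\e_1=\e_2\circ\nu$ and the interchange law for $\star$), and $\boldsymbol{\e}_2\circ\phi=\boldsymbol{\e}_1$ on the nose, so $\PB_{C_1}\leq\PB_{C_2}$ as counital objects in $\KC^-(\AS)$. For $(2)\Rightarrow(1)$: choose a chain map $\boldsymbol{\nu}\colon\PB_{C_1}\to\PB_{C_2}$ with $\boldsymbol{\e}_2\circ\boldsymbol{\nu}\simeq\boldsymbol{\e}_1$; since $\Hom^{-1}_{\Ch(\AS)}(\PB_{C_1},\one)=0$ (as $\one$ lies in degree zero and $(\PB_{C_1})^1=0$) we actually have $\boldsymbol{\e}_2\circ\boldsymbol{\nu}=\boldsymbol{\e}_1$, and restricting to degree zero gives $\nu:=\boldsymbol{\nu}^0\colon C_1\to C_2$ with $\e_2\circ\nu=\e_1$, i.e. $C_1\leq C_2$. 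For the uniqueness, assume (1)--(3) and let $\EB$ satisfy (3); since each chain object of $\PB_{C_1}$ has the form $C_1\star(\cdots)$ and $\EB\star C_1\simeq 0$, homological perturbation (exactly as in the proof of Lemma~\ref{lemma:Cannihilation}) gives $\EB\star\PB_{C_1}\simeq 0$, hence $\EB\simeq\EB\star\one\simeq(\EB\star\AB_{C_1}\to\EB\star\PB_{C_1})\simeq\EB\star\AB_{C_1}$; tensoring $\PB_{C_2}\simeq(\EB\to\PB_{C_1})$ by $\AB_{C_1}$ and using $\PB_{C_1}\star\AB_{C_1}\simeq 0$ yields $\PB_{C_2}\star\AB_{C_1}\simeq\EB\star\AB_{C_1}\simeq\EB$. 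Thus any $\EB$ from (3) is homotopy equivalent to $\PB_{C_2}\star\AB_{C_1}$, which by the remark following Theorem~\ref{thmA} depends only on the thick left tensor ideals generated by $C_2$ and $C_1$ --- that is, only on $[C_2]$ and $[C_1]$ --- and properties (a), (b), (c) hold for $\EB$ because they hold for the equivalent complex $\PB_{C_2}\star\AB_{C_1}\simeq\AB_{C_1}\star\PB_{C_2}$ by Lemma~\ref{lemma:relprojector}.

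The argument is not conceptually hard, since the weight has been front-loaded into the earlier lemmas; the points to be careful about are (i) matching up the various notations $(\EB\to\PB_{C_1})$, $\Cone(\PB_{C_1}\to\PB_{C_2})$, suspensions, and rotations of triangles in $(1)\Leftrightarrow(3)$, and (ii) the steps where a relation holding only up to homotopy is promoted to an honest factorization, which is legitimate here precisely because the complexes in play are supported in the right range of degrees and so the relevant obstruction groups vanish.
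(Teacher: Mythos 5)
Your proof is correct, and the load-bearing parts --- $(1)\Rightarrow(3)$ via $\EB:=\Cone(\PB_{C_1}\to\PB_{C_2})$, the uniqueness argument by tensoring the decomposition with $\AB_{C_1}$ and cancelling $\PB_{C_1}\star\AB_{C_1}$, and properties (a)--(c) via Lemma~\ref{lemma:relprojector} --- coincide with the paper's. Where you diverge is in how you close the cycle of implications. The paper proves $(1)\Leftrightarrow(2)$ entirely inside the idempotent calculus: $C_1\leq C_2$ iff $C_1\leq_L C_2$ (Lemma~\ref{lemma:counitorder}) iff $\PB_{C_1}\star\PB_{C_2}\simeq\PB_{C_1}$ (Theorem~\ref{thmA}) iff $\PB_{C_1}\leq\PB_{C_2}$ (Proposition~\ref{prop:idemp coalg order}); and it proves $(3)\Rightarrow(2)$ by tensoring the decomposition with $\PB_{C_1}$ and absorbing. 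You instead argue at the chain level: for $(1)\Leftrightarrow(2)$ you pass counit factorizations back and forth between $\AS$ and $\Ch^-(\AS)$, using the vanishing of $\Hom^{-1}(\PB_{C_1},\one)$ to promote a homotopy factorization to a strict one; and for $(3)\Rightarrow(1)$ you tensor with $C_1$, obtain $\PB_{C_2}\star C_1\simeq C_1$, and extract from this homotopy equivalence an honest retraction of $C_1$ off $C_2\star C_1$ (legitimate since $\Hom^{-1}(C_1,C_1)=0$), whence $C_1\leq_R C_2$. Both routes are sound; the paper's keeps everything uniform with the coalgebra-order machinery it has already set up (and generalizes to idempotents not of the form $\PB_C$), while your degree-counting/retract observations are more elementary and make explicit exactly where homotopy-level data descends to data in $\AS$ --- a point the paper leaves implicit.
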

We think of $\EB$ as the complement of $\PB_{C_1}$ \emph{relative to $\PB_{C_2}$}, and \eqref{eq:rel decomp} describes a decomposition of $\PB_{C_2}$ as a categorical ``sum'' of the two smaller idempotents $\PB_{C_1}$ and $\EB$.


\begin{proof}
The equivalence (1) $\Leftrightarrow$ (2) holds since $(C_1,\e_1)\leq (C_2,\e_2)$ if and only if $C_1\leq_L C_2$ (Lemma \ref{lemma:counitorder}), which holds if and only if $\PB_{C_1}\star \PB_{C_2}\simeq \PB_{C_1}$ (part (2) of Theorem \ref{thmA}), which is equivalent to $\PB_{C_1}\leq \PB_{C_2}$ (Proposition \ref{prop:idemp coalg order}).

The implication (1) $\Rightarrow$ (3) follows by taking $\EB$ to be the cone of the natural map $\PB_{C_1}\rightarrow \PB_{C_2}$ as in Lemma \ref{lemma:relprojector}.

We will show that (3) $\Rightarrow$ (2).  Suppose we have a decomposition $\PB_{C_2}\simeq (\EB\rightarrow \PB_{C_1})$ in which $\EB$ is annihilated by $C_1$ up to homotopy on the left and right.  Then $\EB$ is annihilated by $\PB_{C_1}$ on the left and right, from which it follows that
\[
\PB_{C_1}\star \PB_{C_2} \ \ \simeq \ \ \left(\PB_{C_1}\star \EB\rightarrow \PB_{C_1}\star \PB_{C_1}\right) \ \ \simeq \ \ \PB_{C_1}
\] 
after contracting a contractible summand and using $\PB_{C_1}\star \PB_{C_1}\simeq \PB_{C_1}$, which is equivalent to (2).  This completes the proof that (1)-(3) are equivalent.

We now prove uniqueness of $\EB$.  Suppose we have a decomposition $\PB_{C_2}\simeq (\EB\rightarrow \PB_{C_1})$ in which $\EB$ is annihilated by $C_1$ up to homotopy on the left and right.  Then $\EB\star \PB_{C_1}\simeq 0$, which implies that $\EB\star \AB_{C_1}\simeq \EB$.  Then tensoring the decomposition $\PB_{C_2}\simeq (\EB\rightarrow \PB_{C_1})$ on the right with $\AB_{C_1}$ yields
\[
\PB_{C_2}\star \AB_{C_1} \ \ \simeq \ \ (\EB \star \AB_{C_1} \rightarrow \PB_{C_1}\star \AB_{C_1}) \ \ \simeq  \ \ \EB \star \AB_{C_1}  \ \ \simeq  \ \  \EB,
\]
where in the second homotopy equivalence we contracted the contractible complex $\PB_{C_1}\star \AB_{C_1}$.  This proves uniqueness of $\EB$, up to homotopy equivalence.  The stated properties of $\EB$ were proven in Lemma \ref{lemma:relprojector}.
\end{proof}

\begin{definition}\label{def:relproj}
We will write $\PB_{C_1}\leq \PB_{C_2}$ if either of the equivalent conditions of Theorem \ref{thm:rel idemp thm} are satisfied.  In this case the \emph{relative projector} $\EB\simeq \Cone(\PB_{C_1}\rightarrow \PB_{C_2})$  will be denoted $\PB_{C_2}/\PB_{C_1}$ or sometimes $\PB_{C_2/C_1}$.
\end{definition}

The following are some trivial observations.
\begin{observation}
The identity $\one\simeq \PB_{\one}$ is the unique maximum with respect to $\leq$ and $0=\PB_0$ is the unique minimum.  Moreover, $\one / \PB_C\simeq \AB_C$ and $\PB_C/0 = \PB_C$.
\end{observation}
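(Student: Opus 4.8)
The statement bundles four small claims, and the plan is to dispatch each by invoking the uniqueness results already established, reserving a direct computation only where it is unavoidable.

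First I would identify $\PB_{\one}$ with $\one$. Taking $C=\one$ and $\e=\Id_{\one}$, every building block $\Id^{\star i}\star\e\star\Id^{\star r-i}$ collapses to $\Id_{\one}$, so the differential $\one^{\star r+1}\to\one^{\star r}$ of $\PB_{\one}$ is the alternating sum of $r+1$ copies of $\Id_{\one}$, equal to $0$ for $r$ odd and $\Id_{\one}$ for $r$ even; thus $\PB_{\one}$ is $\cdots\xrightarrow{\Id}\one\xrightarrow{0}\one\xrightarrow{\Id}\one\xrightarrow{0}\underline{\one}$, which Gaussian elimination applied to the contractible pairs $(\one\xrightarrow{\Id}\one)$ in homological degrees $-1,-2$, then $-3,-4$, and so on, collapses to $\underline{\one}$. (Alternatively one checks that $\one$, with complement $0$, satisfies (U1)--(U3) of Theorem \ref{thmA} --- every object is left $\one$-projective and $0\star\one=0$ --- so uniqueness gives $\PB_{\one}\simeq\one$.) Consequently $\AB_{\one}=\Cone(\PB_{\one}\to\one)\simeq 0$. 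Similarly, for $C=0$ every chain object of $\PB_0$ vanishes, so $\PB_0=0$ on the nose.

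Next, for the extremality claims I would use that $\one$ and $0$ are the unique maximum and minimum among counital objects (the remark following Definition \ref{def:counit order}): since $0\leq C\leq\one$ as counital objects for every counital $(C,\e)$, the equivalence of (1) and (2) in Theorem \ref{thm:rel idemp thm} gives $\PB_0\leq\PB_C\leq\PB_{\one}$, i.e. $0\leq\PB_C\leq\one$ in the sense of Definition \ref{def:relproj}. Uniqueness up to $\simeq$ is then a two-line argument with Proposition \ref{prop:idemp coalg order}: if $\PB_C$ is also a maximum, then $\PB_{\one}\leq\PB_C$, and combining the two inequalities yields $\PB_C\star\PB_{\one}\simeq\PB_C$ and $\PB_C\star\PB_{\one}\simeq\PB_{\one}$, hence $\PB_C\simeq\PB_{\one}$; the minimum case is identical.

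For $\one/\PB_C\simeq\AB_C$ I would avoid computing the cone directly and instead appeal to the uniqueness of the relative projector in Theorem \ref{thm:rel idemp thm}: property (U1) of Theorem \ref{thmA}, together with $\PB_{\one}\simeq\one$, provides a decomposition $\PB_{\one}\simeq(\AB_C\to\PB_C)$, and Lemma \ref{lemma:A kills C} gives $\AB_C\star C\simeq 0\simeq C\star\AB_C$; this is exactly the characterizing data of $\one/\PB_C$ for the pair $C=C_1\leq\one=C_2$, so $\one/\PB_C\simeq\AB_C$. Finally $\PB_C/0=\PB_C$ is immediate, since $\PB_0=0$ and $\Cone(0\to\PB_C)=0[1]\oplus\PB_C=\PB_C$ (or, via uniqueness, the decomposition $\PB_C\simeq(\EB\to 0)\simeq\EB$ with the vacuous condition $\EB\star 0\simeq 0\simeq 0\star\EB$ forces $\EB\simeq\PB_C$). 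The only genuinely non-formal step is the collapse of $\PB_{\one}$ to $\one$; I expect everything else to be pure bookkeeping with the uniqueness theorems.
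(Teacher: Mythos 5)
Your proof is correct. The paper states this as one of several ``trivial observations'' and gives no argument, but your verification --- the direct Gaussian-elimination collapse of $\PB_{\one}$ (or the (U1)--(U3) uniqueness check), the transport of $0\leq C\leq \one$ through Theorem \ref{thm:rel idemp thm} and Proposition \ref{prop:idemp coalg order}, and the identification of $\one/\PB_C$ with $\AB_C$ via the uniqueness of the relative projector --- is exactly the intended bookkeeping and is carried out accurately.
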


\begin{observation}
If $C,D\in\AS$ are counital, then $\PB_C\leq \PB_{C\oplus D}$.
\end{observation}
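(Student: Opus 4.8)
The claim is that $\PB_C \le \PB_{C\oplus D}$ whenever $C$ and $D$ are counital objects of $\AS$. The plan is to verify condition (1) of Theorem \ref{thm:rel idemp thm}, namely $C \le C\oplus D$ in the sense of Definition \ref{def:counit order}; by that theorem this is equivalent to $\PB_C \le \PB_{C\oplus D}$ as counital objects in $\KC^-(\AS)$, which is the assertion. (Alternatively one could invoke Proposition \ref{prop:idemp coalg order} or Lemma \ref{lemma:counitorder}, but Definition \ref{def:counit order} is the most direct.)

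First I would recall that the direct sum of counital objects is counital: if $(C,\e_C)$ and $(D,\e_D)$ are counital, then $C\oplus D$ carries the counit $\e := (\e_C, \e_D) : C\oplus D \to \one$, and one checks that $\e\star\Id_{C\oplus D}$ and $\Id_{C\oplus D}\star\e$ admit right inverses built block-diagonally from those of $C$ and $D$ — this is exactly the statement already asserted in \S\ref{ss:counits} that ``the direct sum of counital objects has the structure of a counital object,'' so I may simply cite it. Then the structure maps of the biproduct give an inclusion $\iota_C : C \to C\oplus D$ with the property that $\e \circ \iota_C = \e_C$, directly from the definition of $\e$ as the map whose component on the $C$-summand is $\e_C$. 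Taking $\nu := \iota_C$ in Definition \ref{def:counit order} shows $(C,\e_C) \le (C\oplus D, \e)$.

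Finally, by Theorem \ref{thm:rel idemp thm} (equivalence of (1) and (2)) this yields $\PB_C \le \PB_{C\oplus D}$, which is the desired conclusion. There is no real obstacle here; the only point requiring a moment's care is that the counit on $C\oplus D$ used in the Definition \ref{def:counit order} comparison must be one making the inclusion $\iota_C$ a factorization of $\e_C$ — but as noted in \S\ref{ss:counits} (and reiterated in the Remark following Lemma \ref{lemma:counitorder}), the equivalence class of a counital object, and hence the relation $\le$, is independent of the choice of counit, so any counit on $C\oplus D$ will do.
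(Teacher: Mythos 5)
Your argument is correct and is exactly the intended one: the paper states this as a ``trivial observation'' without proof, and the evident justification is precisely the factorization $\e_C = (\e_C,\e_D)\circ\iota_C$ giving $C\leq C\oplus D$ in the sense of Definition \ref{def:counit order}, followed by the equivalence (1)~$\Leftrightarrow$~(2) of Theorem \ref{thm:rel idemp thm}. Your side remarks (that $C\oplus D$ is counital via block-diagonal right inverses, and that the relation is independent of the choice of counit) are accurate and appropriately cited.
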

\begin{observation}\label{obs:elim D}
If $C,D\in \AS$ are counital and $D\leq C$, then $\PB_C\simeq \PB_{C\oplus D}$.
\end{observation}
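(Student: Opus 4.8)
The plan is to reduce the statement to a comparison of thick left tensor ideals. By part (2) of Theorem \ref{thmA} together with Corollary \ref{cor:uniqueness}, the idempotent coalgebra $\PB_C$ is determined up to homotopy equivalence by the full subcategory $\{X \mid \PB_C\star X\simeq X\}$, which consists of the complexes of left $C$-projectives; by Lemma \ref{lemma:X leq C} this class depends only on the thick left tensor ideal $\overline{\AS\star C}$, and similarly $\PB_{C\oplus D}$ depends only on $\overline{\AS\star(C\oplus D)}$. (Here one first notes, as recalled in \S\ref{ss:counits}, that $C\oplus D$ is genuinely a counital object, so that $\PB_{C\oplus D}$ is defined.) Hence it suffices to prove $C\sim_L C\oplus D$.

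One inclusion is immediate: $C$ is a retract of $\one\star(C\oplus D)\cong C\oplus D$, so $C\leq_L C\oplus D$. For the reverse inequality I would invoke the hypothesis $D\leq C$ as counital objects: by Lemma \ref{lemma:counitorder} this gives $D\leq_L C$, so there is an object $Y\in\AS$ with $D$ a retract of $Y\star C$. Since $\star$ is additive, $(\one\oplus Y)\star C\cong C\oplus(Y\star C)$, and $C\oplus D$ is a retract of $C\oplus(Y\star C)$; therefore $C\oplus D\leq_L C$. Combining the two inequalities yields $C\sim_L C\oplus D$, hence $\overline{\AS\star C}=\overline{\AS\star(C\oplus D)}$ and $\PB_{C}\simeq\PB_{C\oplus D}$.

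An equally short alternative is to verify the uniqueness conditions (U1)--(U3) of Theorem \ref{thmA} for the pair $(\PB_C,\AB_C)$ with $C$ replaced by $C\oplus D$: (U1) is Theorem \ref{thmA}(1); (U2) amounts to $\AB_C\star C\simeq 0$ (Lemma \ref{lemma:A kills C}) together with $\AB_C\star D\simeq 0$, the latter holding because $D\leq_R C$ (Lemma \ref{lemma:counitorder}) makes $D$ left $C$-projective, hence annihilated by $\AB_C$ via Lemma \ref{lemma:Cannihilation}; and (U3) holds since every left $C$-projective is a retract of some $C\star Z$, hence of $(C\oplus D)\star Z$, so is left $(C\oplus D)$-projective. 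Then the uniqueness clause gives $\PB_{C\oplus D}\simeq\PB_C$. There is no genuine obstacle beyond this bookkeeping; the only point deserving a little care is the distributivity of $\star$ over $\oplus$ used to split $(C\oplus D)\star Z$ and the fact that $C\oplus D$ carries a counit at all.
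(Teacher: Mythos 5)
Your proof is correct and is essentially the intended argument: the paper records this as a trivial observation precisely because, as noted in the remark following Theorem \ref{thmA}, $\PB_C$ depends only on the thick tensor ideal generated by $C$, and $D\leq C$ forces $C$ and $C\oplus D$ to generate the same ideal (your second route, via the uniqueness clause (U1)--(U3), is equally valid). The only blemish is a left/right label: the class of left $C$-projectives is the thick \emph{right} tensor ideal $\overline{C\star \AS}$, i.e.\ it is governed by $\leq_R$ rather than $\leq_L$, but since $C$ and $C\oplus D$ are both counital, Lemma \ref{lemma:counitorder} makes $\leq_L$ and $\leq_R$ interchangeable here, so nothing in your argument is affected.
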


\begin{observation}
If $C$ contains $\one$ as a direct summand, then $\AB_C\simeq 0$ and $\PB_C\simeq \one$.
\end{observation}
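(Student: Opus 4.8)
The plan is to reduce the statement to part~(2) of Theorem~\ref{thmA} applied to the single object $X=\one$, the point being that the monoidal unit is automatically $C$-projective as soon as $\one$ occurs as a direct summand of $C$.

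First I would unwind the hypothesis. To say that $C$ contains $\one$ as a direct summand is to say there are morphisms $\sigma\colon\one\to C$ and $\pi\colon C\to\one$ with $\pi\circ\sigma=\Id_{\one}$. Via the unitor isomorphisms $\one\star C\cong C\cong C\star\one$ this exhibits $\one$ as a retract of $\one\star C$ and of $C\star\one$, so $\one\leq_L C$ and $\one\leq_R C$; by Lemma~\ref{lemma:X leq C} this says precisely that $\one$ is both left and right $C$-projective. In particular, regarded as a complex concentrated in homological degree zero, $\one$ is a one-term complex of left $C$-projectives.

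Now Theorem~\ref{thmA}(2) applies with $X=\one$ and gives at once $\PB_C\star\one\simeq\one$ and, equivalently, $\AB_C\star\one\simeq 0$. Since $\one$ is the monoidal unit of $\Ch^-(\AS)$, the unitors identify $\PB_C\star\one\cong\PB_C$ and $\AB_C\star\one\cong\AB_C$, whence $\PB_C\simeq\one$ and $\AB_C\simeq 0$, as claimed.

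I do not anticipate a real obstacle: the only insight required is that $\one$ is the prototypical $C$-projective object once it is a summand of $C$, after which Theorem~\ref{thmA} does all the work. The single point that deserves a moment's care is the last step --- deducing $\AB_C\simeq 0$ from $\AB_C\star\one\simeq 0$ --- but this is immediate from the unitor $\AB_C\star\one\cong\AB_C$. If one preferred to avoid Theorem~\ref{thmA}(2), one could instead note that $(\one,\Id_{\one})$ is counital with $\one\leq_L C$, so $\PB_{\one}\leq\PB_C$ as counital objects in $\KC^-(\AS)$; Proposition~\ref{prop:idemp coalg order} then gives $\PB_C\star\PB_{\one}\simeq\PB_{\one}$, i.e.\ $\PB_C\simeq\one$ since $\PB_{\one}\simeq\one$, and finally $\AB_C=\Cone(\PB_C\to\one)$ is the cone of a homotopy equivalence, hence contractible.
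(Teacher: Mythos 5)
Your proof is correct; the paper records this as a ``trivial observation'' without proof, and your main argument --- that a retract $\one\summand C$ makes $\one$ both left and right $C$-projective via Lemma \ref{lemma:X leq C}, so that Theorem \ref{thmA}(2) applied to $X=\one$ gives $\PB_C\simeq\PB_C\star\one\simeq\one$ and $\AB_C\simeq\AB_C\star\one\simeq 0$ --- is exactly the intended one. Your alternative route through $\PB_\one\leq\PB_C$ and Proposition \ref{prop:idemp coalg order} is equally valid.
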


\begin{observation}
If $(C,\e)$ is a counital object, then so is $(C\star C, \e\star \e)$, and $\PB_{C\star C}\simeq \PB_C$.
\end{observation}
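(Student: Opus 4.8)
The plan is to deduce this from the uniqueness characterization in Theorem~\ref{thmA}(4): I will check that the pair $(\PB_C,\AB_C)$ already satisfies conditions (U1)--(U3) relative to the counital object $C\star C$, so that uniqueness forces $\PB_{C\star C}\simeq\PB_C$ (and, as a bonus, $\AB_{C\star C}\simeq\AB_C$). Before this one must verify that $(C\star C,\e\star\e)$ genuinely is a counital object, where $\e\star\e$ abbreviates $C\star C\to\one\star\one\cong\one$. For this I would choose a right inverse $\Delta_R:C\to C\star C$ of $\Id_C\star\e$ (and $\Delta_L$ of $\e\star\Id_C$), observe that for every $n$ the map $\Id_{C^{\star n}}\star\e:C^{\star n+1}\to C^{\star n}$ has right inverse $\Id_{C^{\star n-1}}\star\Delta_R$, and then use that a composite of maps each admitting a right inverse again admits one: up to unitors $\Id_{C\star C}\star(\e\star\e)$ equals $(\Id_{C^{\star 2}}\star\e)\circ(\Id_{C^{\star 3}}\star\e)$, hence has a right inverse, and symmetrically for $(\e\star\e)\star\Id_{C\star C}$ using $\Delta_L$. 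Thus $(C\star C,\e\star\e)$ is counital.

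Granting this, I verify (U1)--(U3) for $(\PB',\AB'):=(\PB_C,\AB_C)$ with respect to $C\star C$. Condition (U1), the homotopy equivalence $\one\simeq(\AB_C\to\PB_C)$, is part of Theorem~\ref{thmA}(1) and makes no reference to $C$. Condition (U2) asks that $\AB_C\star(C\star C)\simeq 0$; since $\AB_C\star C\simeq 0$ by Lemma~\ref{lemma:A kills C}, tensoring once more with $C$ gives $\AB_C\star C\star C\simeq 0$. The substantive point is (U3): that $\PB_C$ is a complex of left $C\star C$-projective objects. Its chain objects are the $C^{\star r}$ with $r\geq 1$. For $r\geq 2$, $C^{\star r}$ has the form $(C\star C)\star X$ with $X\in\AS$, so it lies in the right tensor ideal generated by $C\star C$; hence $C^{\star r}\leq_R C\star C$ and $C^{\star r}$ is left $C\star C$-projective by Lemma~\ref{lemma:X leq C}. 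For $r=1$, the map $\Id_C\star\e:C\star C\to C$ with section $\Delta_R$ exhibits $C$ as a retract of $C\star C$, so $C\leq_R C\star C$ and $C$ is left $C\star C$-projective, again by Lemma~\ref{lemma:X leq C}. (Equivalently: $C\leq_R C\star C$ together with transitivity of $\leq_R$ shows that every left $C$-projective object is left $C\star C$-projective, and $\PB_C$ is tautologically a complex of left $C$-projectives.)

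Having verified (U1)--(U3), Theorem~\ref{thmA}(4) applied to the counital object $C\star C$ yields $\PB_C\simeq\PB_{C\star C}$, as desired. I do not expect a genuine obstacle; the only points requiring care are keeping the left/right conventions for cell orders and $C$-projectivity straight (so that (U3) is checked on the side dictated by Theorem~\ref{thmA}), and recognizing that verifying (U3) for the degree-zero chain object $C$ of $\PB_C$ is exactly the place where counitality of $C$, through the section $\Delta_R$, gets used.
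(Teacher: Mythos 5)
Your proof is correct. The paper itself offers no argument for this Observation; the intended one-liner is presumably the Remark following Theorem~\ref{thmA} (namely that $\PB_C$, $\AB_C$ depend only on the thick left tensor ideal generated by $C$, and $C$ and $C\star C$ generate the same one, since $C\star C$ lies in the ideal generated by $C$ while $C$ is a retract of $C\star C$ via $\Delta_R$ and $\Id_C\star\e$), or equivalently the chain $C\sim C\star C$ as counital objects combined with Lemma~\ref{lemma:counitorder} and Proposition~\ref{prop:idemp coalg order}. Your route through the uniqueness characterization (U1)--(U3) of Theorem~\ref{thmA}(4) is a correct and more explicit instance of the same idea: the counitality of $(C\star C,\e\star\e)$ via composites of right-invertible maps is verified properly, (U2) follows from Lemma~\ref{lemma:A kills C} by tensoring, and your check of (U3) correctly handles the one nontrivial chain object, the degree-zero term $C$, which is left $C\star C$-projective precisely because $\Delta_R$ exhibits it as a retract of $C\star C$. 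The only cosmetic remark is that the paper's proof of Theorem~\ref{thmA}(4) implicitly also invokes the right-handed versions of (U2) and (U3); these hold here by the mirror of your argument (using $\Delta_L$ and $C\star\AB_C\simeq 0$), so nothing is lost.
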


\section{The normalized construction}
\label{s:normalized}
In this section we give a normalized version of the complexes $\PB_C$ and $\AB_C$.  We actually discovered these expressions first, only later realizing that they can be expressed in terms of equations \eqref{eq:introbarP} and \eqref{eq:introbarA}.

Ultimately we are seeking to simplify the complexes $\AB_C$ and $\PB_C$ by decomposing the chain groups $C^{\star n}$ as direct sums of simpler pieces.  The basic summands of $C^{\star n}$ are images of some distinguished idempotents $e_k\in C^{\star k}$ for various $k$. 

The main results can be summarized as follows.  Suppose $C\in \AS$ is counital and $\AS$ is idempotent complete.  Then:
\begin{itemize}
\item there exist objects $X_n\in \AS$ for $n\geq 1$ such that $C^{\star n}$ is a direct sum of $X_k$ with multiplicity $\binom{n-1}{k-1}$ for $1\leq k\leq n$ (in particular $X_n$ appears in $C^{\star n}$ with multiplicity one). 
\item there are canonical maps $\d_n:X_n\rightarrow X_{n-1}$ such that $\d_{n-1}\circ \d_n=0$ for all $n\geq 1$ (by convention we set $X_0:=\one$), and
\[
(C\rightarrow \underline{\one})^{\star n} \ \ \simeq \ \ (X_n \buildrel \d_n\over \longrightarrow \cdots \buildrel \d_2\over \longrightarrow X_1\buildrel \d_1\over \longrightarrow\underline{X_0}).
\]
\item there is a well-defined ``infinite tensor power'', which satisfies
\[
\text{``}(C\rightarrow \underline{\one})^{\star \infty}\text{''} \ \ \simeq \ \ ( \cdots \buildrel \d_2\over \longrightarrow X_1\buildrel \d_1\over \longrightarrow\underline{X_0}) \ \ \simeq \ \ \AB_C.
\]
Precisely speaking, the infinite tensor power is the homotopy colimit, or mapping telescope, of a directed system $\one\rightarrow \FB \rightarrow \FB^{\star 2}\rightarrow \cdots$, where $\FB=\Cone(C\buildrel\e\over\rightarrow \one)$.
\end{itemize}

\subsection{A distinguished direct summand of $C^{\star n}$}
\label{ss:en}
Let $\AS$ be a fixed additive monoidal $\k$-linear category, and let $C\in \AS$ be an object equipped with a map $\e:C\rightarrow \one$ and a map $\Delta:C\rightarrow C\star C$ satisfying the right counit axiom:
\begin{equation}\label{eq:rightcounit}
(\Id_C\star \e)\circ \Delta  = \Id_C.
\end{equation}
In this section we will not assume that $\Delta$ satisfies the left counit or coassociativity axioms.  Our goal in this section is to define and establish basic properties of some special idempotents $e_n\in \End_\AS(C^{\star n})$.

\begin{definition}\label{def:en}
Define endomorphisms $e_n\in \End_\AS(C^{\star n})$ for $n\geq 1$ as follows.  First, introduce shorthand $\Id_k := \Id_{C^{\star k}}$.  Let $e_1:=\Id_1$ and
\begin{equation}
e_2\ \ := \ \ \Id_2 - \Delta\circ (\Id_1\star \e),
\end{equation}
and for $n\geq 2$ set
\begin{equation}\label{eq:en}
e_n \ \ = \ \ (e_2\star \Id_{n-2})\circ (\Id_1\star e_2\star \Id_{n-3})\circ \cdots \circ (\Id_{n-2}\star e_2).
\end{equation}
By convention we set $C^{\star 0}=\one$ and $e_0:=\Id_0:=\Id_{\one}$.
\end{definition}

\begin{lemma}\label{lemma:en kills ep}
We have $(\Id_{l-1}\star \e \star \Id_{n-l})\circ e_n = 0$ for all $2\leq l\leq n$.
\end{lemma}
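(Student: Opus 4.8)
The statement is that for all $2 \le l \le n$, the composition $(\Id_{l-1} \star \e \star \Id_{n-l}) \circ e_n = 0$. The natural strategy is induction on $n$, but the cleanest approach exploits the recursive structure of $e_n$ from equation \eqref{eq:en}, namely $e_n = (e_2 \star \Id_{n-2}) \circ (\Id_1 \star e_n')$ where $e_n'$ denotes the partial composite applying $e_2$ in all positions $2, 3, \ldots, n-1$ (so $e_n' = \Id_1 \star e_{n-1}$ up to reindexing). First I would record the base case $n = 2$: here $l = 2$ is forced, and $(\e \star \Id_1) \circ e_2$... wait, actually $l=2$ gives $(\Id_1 \star \e) \circ e_2 = (\Id_1 \star \e) - (\Id_1 \star \e) \circ \Delta \circ (\Id_1 \star \e) = (\Id_1 \star \e) - (\Id_1 \star \e) = 0$ by the right counit axiom \eqref{eq:rightcounit}. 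Hmm, but the statement for $n=2$ only asks about $l=2$, which is exactly this.

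\textbf{Key steps.} For the inductive step, assume the claim holds for $C^{\star (n-1)}$. Write $e_n = (e_2 \star \Id_{n-2}) \circ (\Id_1 \star e_{n-1})$ (here I use that the composite of $e_2$'s in positions $2$ through $n-1$, which are the factors after the first one in \eqref{eq:en}, equals $\Id_1 \star e_{n-1}$ — this is just \eqref{eq:en} for $e_{n-1}$ shifted by one tensor slot on the left). Now split into cases. \emph{Case $l \ge 3$:} the map $\Id_{l-1} \star \e \star \Id_{n-l}$ acts trivially on the first tensor factor, so it commutes past $e_2 \star \Id_{n-2}$, giving $(\Id_{l-1} \star \e \star \Id_{n-l}) \circ e_n = (e_2 \star \Id_{n-2}) \circ (\Id_1 \star (\Id_{l-2} \star \e \star \Id_{n-l})) \circ (\Id_1 \star e_{n-1}) = (e_2 \star \Id_{n-2}) \circ (\Id_1 \star [(\Id_{l-2} \star \e \star \Id_{n-l}) \circ e_{n-1}])$, and the bracketed term vanishes by the inductive hypothesis since $2 \le l-1 \le n-1$. \emph{Case $l = 2$:} here I need $(\Id_1 \star \e \star \Id_{n-2}) \circ e_n = 0$. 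The first two tensor factors are involved, so I factor through $e_2$: compute $(\Id_1 \star \e \star \Id_{n-2}) \circ (e_2 \star \Id_{n-2}) = ((\Id_1 \star \e) \circ e_2) \star \Id_{n-2} = 0$ by the $n=2$ base case, so the whole composite is zero immediately.

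\textbf{Main obstacle.} The only subtlety is bookkeeping with the tensor-slot indexing in \eqref{eq:en} — making sure that the recursion $e_n = (e_2 \star \Id_{n-2}) \circ (\Id_1 \star e_{n-1})$ is stated correctly and that "$\e$ in position $l$" interacts with the leftmost $e_2$ only when $l = 2$ (since $e_2 \star \Id_{n-2}$ touches positions $1$ and $2$). Once the case split on whether $l = 2$ or $l \ge 3$ is set up, each case is a one-line reduction: $l \ge 3$ pushes the $\e$ to the right past $e_2 \star \Id_{n-2}$ and invokes induction; $l = 2$ uses associativity of $\star$ to isolate $(\Id_1 \star \e) \circ e_2 = 0$. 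I expect no genuine difficulty beyond this; the right counit axiom \eqref{eq:rightcounit} is the only structural input and it enters solely through the base case.
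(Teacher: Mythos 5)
Your proof is correct and follows essentially the same route as the paper, which simply checks the $n=2$ case via the right counit axiom and asserts that the general case "follows from the definition of $e_n$ in terms of $e_2$"; your induction using $e_n=(e_2\star\Id_{n-2})\circ(\Id_1\star e_{n-1})$ with the case split $l=2$ versus $l\ge 3$ is exactly the intended filling-in of that assertion. (Only a cosmetic slip: after pushing $\e$ past the leftmost factor in the $l\ge 3$ case, $e_2\star\Id_{n-2}$ should become $e_2\star\Id_{n-3}$ since the target is $C^{\star n-1}$.)
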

\begin{proof}
The statement is vacuous for $n=0,1$.  The right counit axiom \eqref{eq:rightcounit} implies that $(\Id_1\star \e)\circ e_2 = 0$ which proves the statement in case $n=2$.  The statement for $n>2$ follows from the definition of $e_n$ in terms of $e_2$ \eqref{eq:en}.
\end{proof}

\begin{lemma}[Left $e_n$ absorption]\label{lemma:e absorption}
Suppose $f:D\rightarrow E\star C^{\star n} \star F$ satisfies
\[
(\Id_E\star (\Id_{l-1}\star \e\star \Id_{n-l}) \star \Id_F)\circ f =  0
\]
for $2\leq l\leq n$.  Then 
\[
(\Id_E\star e_n\star \Id_F)\circ f =  f.
\]
\end{lemma}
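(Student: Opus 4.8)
The plan is to unwind the inductive definition \eqref{eq:en} of $e_n$ as an $(n-1)$-fold composite of copies of $e_2$ placed in adjacent tensor slots, and to check that each such copy, after padding by $\Id_E$ and $\Id_F$, already fixes $f$.

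Concretely, for $1\le i\le n-1$ write $\phi_i := \Id_{i-1}\star e_2\star \Id_{n-i-1}\in \End_\AS(C^{\star n})$, so that $e_n = \phi_1\circ \phi_2\circ \cdots\circ \phi_{n-1}$ by \eqref{eq:en}. Substituting $e_2 = \Id_2 - \Delta\circ(\Id_1\star\e)$, using that $\star$ is additive and functorial in each variable, and suppressing unitor and associator isomorphisms (as in the proof of Lemma \ref{lemma:counitorder}), one finds
\[
\phi_i \ = \ \Id_n - (\Id_{i-1}\star \Delta\star \Id_{n-i-1})\circ(\Id_i\star \e\star \Id_{n-i-1}).
\]
The key observation is that $\Id_i\star\e\star\Id_{n-i-1}$ is exactly the morphism $\Id_{l-1}\star\e\star\Id_{n-l}$ with $l=i+1$; and as $i$ ranges over $\{1,\dots,n-1\}$, the index $l=i+1$ ranges over $\{2,\dots,n\}$, which is precisely the range appearing in the hypothesis on $f$.

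Now apply the functor $\Id_E\star(-)\star\Id_F$. Functoriality of $\star$ gives
\[
\Id_E\star e_n\star \Id_F \ = \ (\Id_E\star\phi_1\star\Id_F)\circ\cdots\circ(\Id_E\star\phi_{n-1}\star\Id_F),
\]
while for each $i$ the difference $(\Id_E\star\phi_i\star\Id_F) - \Id_{E\star C^{\star n}\star F}$ factors on the right through $\Id_E\star(\Id_i\star\e\star\Id_{n-i-1})\star\Id_F$. Since the latter morphism annihilates $f$ by hypothesis (the case $l=i+1$), we conclude $(\Id_E\star\phi_i\star\Id_F)\circ f = f$ for every $i\in\{1,\dots,n-1\}$. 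Composing the factors one at a time, starting from the rightmost and applying this identity repeatedly, yields $(\Id_E\star e_n\star\Id_F)\circ f = f$, which is the claim. The cases $n=0,1$ are vacuous, as then $e_n=\Id_n$ and the hypothesis is empty.

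There is no genuine obstacle; the only point requiring care is the positional bookkeeping — tracking which string of identities occupies which slots, and matching $\Id_i\star\e\star\Id_{n-i-1}$ to the hypothesis via the reindexing $l=i+1$ — together with the routine suppression of unitor and associator isomorphisms when rewriting $\Id_{i-1}\star(\Id_1\star\e)\star\Id_{n-i-1}$ as $\Id_i\star\e\star\Id_{n-i-1}$.
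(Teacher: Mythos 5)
Your proof is correct and follows essentially the same route as the paper's: expand $e_n$ as the composite of the factors $\Id_{i-1}\star e_2\star\Id_{n-i-1}$, observe that each is $\Id_n$ minus a morphism factoring through $\Id_{l-1}\star\e\star\Id_{n-l}$ (with $l=i+1$), and use the hypothesis to absorb each factor into $f$ one at a time. The paper merely states this more tersely; your reindexing bookkeeping is accurate.
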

\begin{proof}
The lemma is trivially true for $n=0,1$.  Observe that $\Id_E\star (\Id_{l-2}\star e_2\star \Id_{n-l}) \star \Id_F$ is  $\Id_{F\star C^{\star n}\star E}$ plus a morphism which, under the hypotheses on $f$, becomes zero after precomposing with $f$. Thus we have
\[
(\Id_E\star (\Id_{l-2}\star e_2\star \Id_{n-l}) \star \Id_F)\circ f = f
\]
for all $2\leq l\leq n$.  The lemma now follows from the definition of $e_n$.
\end{proof}

Combining the previous two lemmas shows that $e_n$ is idempotent.
\begin{lemma}
We have $e_n^2 = e_n$ for all $n\in \Z_{\geq 0}$.\qed
\end{lemma}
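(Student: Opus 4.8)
The plan is to deduce $e_n^2 = e_n$ directly from Lemma \ref{lemma:en kills ep} and Lemma \ref{lemma:e absorption} (the ``left $e_n$ absorption'' lemma), applied with trivial context objects $E = F = \one$. First I would take $f := e_n \colon C^{\star n} \to C^{\star n}$ in the statement of Lemma \ref{lemma:e absorption}, setting $D = C^{\star n}$ and $E = F = \one$ (so that $E \star C^{\star n} \star F = C^{\star n}$ after suppressing unitors). Lemma \ref{lemma:en kills ep} says precisely that
\[
(\Id_{l-1} \star \e \star \Id_{n-l}) \circ e_n = 0 \qquad \text{for all } 2 \leq l \leq n,
\]
which is exactly the hypothesis required of $f$ in Lemma \ref{lemma:e absorption} (with the $E$ and $F$ factors trivial). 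The conclusion of that lemma is then $e_n \circ e_n = e_n$, which is the claim. The boundary cases $n = 0, 1$ are immediate since $e_0 = \Id_\one$ and $e_1 = \Id_1$ are literally identity morphisms.

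I do not anticipate any real obstacle here: both ingredient lemmas have already been established in the excerpt, and the only subtlety is the bookkeeping of matching up the indexing conventions — in particular checking that the range $2 \leq l \leq n$ in Lemma \ref{lemma:en kills ep} coincides with the range demanded in Lemma \ref{lemma:e absorption}, and that the absorption lemma is being applied in the degenerate situation where the ``ambient'' tensor factors $E, F$ are the monoidal unit. One should also briefly note that $D = C^{\star n}$ is a legitimate choice of source object. That is the entire argument; it is genuinely a one-line consequence of the preceding two lemmas, and the $\qed$ already placed in the excerpt reflects that the author intends exactly this.

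If one wanted to be slightly more self-contained, an alternative is a direct inductive argument on $n$ using the recursive definition \eqref{eq:en}: the base case $e_2^2 = e_2$ follows since $(\Id_1 \star \e) \circ \Delta \circ (\Id_1 \star \e) = (\Id_1 \star \e)$ by the right counit axiom \eqref{eq:rightcounit}, so $\Delta \circ (\Id_1 \star \e)$ is idempotent and hence so is its complement $e_2$; then for $n > 2$ one commutes the various $\Id_i \star e_2 \star \Id_j$ factors past each other, using Lemma \ref{lemma:en kills ep} to kill cross terms. But this is strictly more work than invoking the absorption lemma, so I would present the short argument above.
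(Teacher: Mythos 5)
Your argument is exactly the paper's: the lemma is stated with a \qed because it follows immediately by combining Lemma \ref{lemma:en kills ep} with the absorption Lemma \ref{lemma:e absorption} applied to $f = e_n$ with trivial ambient factors $E = F = \one$, precisely as you describe. The application is legitimate and the boundary cases $n=0,1$ are handled correctly, so there is nothing to add.
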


\subsubsection{Karoubi envelope}
\label{sss:karoubi}
If $\AS$ is idempotent complete then we have objects of $\AS$ corresponding to the images of the idempotents $e_n$.  If $\AS$ is not idempotent complete then we may embed $\AS$ fully faithfully into an idempotent complete category via the following well-known construction.

\begin{definition}\label{def:karoubi envelope}
If $\AS$ is an additive $\k$-linear category then the \emph{Karoubi envelope} of $\AS$ is the $\k$-linear category $\Kar(\AS)$ with objects pairs $(X,e)$ where $X\in \AS$ is an object and $e\in \End_\AS(X)$ is idempotent.  A morphism $(X_0,e_0)\rightarrow (X_1,e_1)$ in $\Kar(\AS)$ is a morphism $f:X_0\rightarrow X_1$ such that $e_1\circ f \circ e_0 = f$.
\end{definition}
The object $(X,e)\in \Kar(\AS)$ will also be denoted $\im e$. 
\begin{remark}
There is a canonical fully faithful functor $\AS\rightarrow \Kar(\AS)$ sending $X\mapsto (X,\Id_X)$.  We typically denote $(X,\Id_X)$ simply by $X$, and regard $\AS$ as a full subcategory of $\Kar(\AS)$.
\end{remark}
\begin{remark}  If $\AS$ is monoidal then so is $\Kar(\AS)$, in a natural way.  On the level of objects, this monoidal structure is defined by
\[
(X,e)\star (X',e') = (X\star X', e\star e').
\]
\end{remark}
\begin{remark}The category $\AS$ is idempotent complete if and only if the canonical functor $\AS\rightarrow \Kar(\AS)$ is an equivalence.
\end{remark}
\begin{remark}The idempotent $e\in \End_\AS(X)$ can be regarded as a morphism in $\Kar(\AS)$ in (at least) three ways: either as the identity endomorphism of $\im e$, or the inclusion / projection of $\im e$ as a direct summand of $X$.
\end{remark}

\subsubsection{Tensor structure}
\label{sss:tensoring e}
\begin{lemma}\label{lemma:C times im e}
Retain notation as in \S \ref{ss:en}.  Then we have an isomorphism
\[
C\star \im e_n \cong \im e_{n+1}\oplus \im e_n \qquad \qquad 
\]
in $\Kar(\AS)$, for $n\geq 1$.
\end{lemma}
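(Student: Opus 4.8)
The statement to prove is the isomorphism $C\star \im e_n \cong \im e_{n+1}\oplus \im e_n$ in $\Kar(\AS)$ for $n\geq 1$. My plan is to exhibit this decomposition by writing down an explicit idempotent on $C\star C^{\star n} = C^{\star n+1}$ whose image is $C\star \im e_n$, and then splitting that idempotent as an orthogonal sum of two idempotents whose images are $\im e_{n+1}$ and $\im e_n$ respectively. Concretely, $C\star \im e_n$ is by definition the image of $\Id_C\star e_n = \Id_1\star e_n \in \End(C^{\star n+1})$. The natural guess is that $e_{n+1}$ is a sub-idempotent of $\Id_1 \star e_n$, i.e. $e_{n+1}\circ(\Id_1\star e_n) = e_{n+1} = (\Id_1\star e_n)\circ e_{n+1}$, and that the ``complementary'' piece $f_n := (\Id_1\star e_n) - e_{n+1}$ is an idempotent orthogonal to $e_{n+1}$ with image isomorphic to $\im e_n$.

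\textbf{Key steps.} First I would verify $e_{n+1} = (\Id_1\star e_n)\circ e_{n+1} = e_{n+1}\circ(\Id_1 \star e_n)$. From the defining formula \eqref{eq:en}, $e_{n+1} = (e_2\star \Id_{n-1})\circ(\Id_1\star e_2\star \Id_{n-2})\circ\cdots\circ(\Id_{n-1}\star e_2)$, and all factors except the leftmost are of the form $\Id_1 \star (\text{something in }C^{\star n})$; moreover $e_n\circ e_n = e_n$ lets one insert a copy of $\Id_1\star e_n$ among those factors, giving one of the two identities; the other follows similarly (or from Lemma \ref{lemma:e absorption}, since $e_{n+1}$ is killed on the left by all the partial counits $\Id_{l-1}\star\e\star\Id_{n+1-l}$ for $2\le l\le n+1$ by Lemma \ref{lemma:en kills ep}, in particular for $l$ in the range $3\le l \le n+1$, which are exactly the maps $\Id_1\star(\Id_{l'-1}\star\e\star\Id_{n-l'})$ needed to apply absorption with $E=C$, $F=\one$ — this shows $(\Id_1\star e_n)\circ(\text{stuff}) = \dots$). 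Second, set $f_n := (\Id_1 \star e_n) - e_{n+1}$; using the two identities just proved together with $e_{n+1}^2 = e_{n+1}$ and $(\Id_1\star e_n)^2 = \Id_1\star e_n$, a short computation gives $f_n^2 = f_n$ and $f_n e_{n+1} = e_{n+1}f_n = 0$. Hence in $\Kar(\AS)$ we get $C\star\im e_n = \im(\Id_1\star e_n) \cong \im e_{n+1}\oplus \im f_n$. Third — and this is where the real content lies — I must identify $\im f_n \cong \im e_n$. For this I would produce explicit mutually inverse maps between $\im f_n$ and $\im e_n$ in $\Kar(\AS)$. The map $\im e_n \to C\star\im e_n$ built from $\Delta$ in the first slot, namely $(\Delta\star\Id_{n-1})\circ(\text{incl})$ (using $(\Id_1\star\e)\circ\Delta = \Id_1$), is a natural candidate for the section; composing with $f_n$ (projection) should land it in $\im f_n$, and the retraction back is $(\Id_1\star\e)\star\Id_{n-1}$ followed by projection to $\im e_n$. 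One then checks the two composites are the identities on $\im e_n$ and $\im f_n$ respectively, using the right counit axiom, the fact that $e_n$ absorbs on appropriate sides, and $e_2 = \Id_2 - \Delta\circ(\Id_1\star\e)$.

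\textbf{Main obstacle.} The bookkeeping in Step 1 — showing $e_{n+1}$ is genuinely a sub-idempotent of $\Id_1\star e_n$ on both sides — is the part most likely to require care, because the defining product \eqref{eq:en} is built from $e_2$'s overlapping in adjacent pairs of tensor factors, so it is not literally of the form $(\Id_1\star e_n)\circ(\text{something})$. I expect the cleanest route is to lean on the absorption Lemma \ref{lemma:e absorption} with $E = C$: $e_{n+1}$ vanishes after precomposition with $\Id_C\star(\Id_{l-1}\star\e\star\Id_{n-l})$ for $2\le l\le n$ by Lemma \ref{lemma:en kills ep}, so Lemma \ref{lemma:e absorption} gives $(\Id_C\star e_n)\circ e_{n+1} = e_{n+1}$ immediately. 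The other identity $e_{n+1}\circ(\Id_C\star e_n) = e_{n+1}$ should follow from expanding the rightmost block of factors in \eqref{eq:en} (those not touching the first slot), which already contains all the $e_2$'s needed to build $\Id_1\star e_n$ up to reordering and idempotency. Once both sub-idempotent identities are in hand, everything else is routine idempotent algebra in $\Kar(\AS)$ plus one explicit section/retraction verification.
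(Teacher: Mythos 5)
Your proposal is correct and is essentially the paper's own argument with the steps reordered: the paper defines the same section $\sigma=(\Delta\star\Id_{n-1})\circ e_n$ and retraction $\pi=(\Id_1\star\e\star\Id_{n-1})\circ(\Id_1\star e_n)$, checks via the absorption lemma that they are well-defined maps between $\im(\Id_1\star e_n)$ and $\im e_n$ with $\pi\circ\sigma=e_n$, and then computes directly that the complementary idempotent $\Id_1\star e_n-\sigma\circ\pi$ equals $e_{n+1}$ (using $e_{n+1}=(e_2\star\Id_{n-1})\circ(\Id_1\star e_n)$), which yields your sub-idempotent and orthogonality identities for free rather than as a separate first step. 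All of your intermediate claims check out, so the plan goes through as written.
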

\begin{proof}
Define morphisms $\pi:C^{\star n+1}\leftrightarrow C^{\star n}:\sigma$ by the formulas
\[
\pi \ := \ (\Id_1\star \e\star \Id_{n-1})\circ (\Id_1\star e_n),
\]
\[
\sigma \ := \ (\Delta\star \Id_{n-1})\circ e_n.
\]
Observe that $(\Id_{l-1}\star \e\star \Id_{n-l})\circ \pi = 0$ for $2\leq l\leq n$, hence $e_n\circ \pi = \pi$ by Lemma \ref{lemma:e absorption}.  Thus, $\pi$ can be regarded as a morphism $\im(\Id_1\star e_n)\rightarrow \im(e_n)$ in $\Kar(\AS)$.

Similarly $\sigma\circ e_n = \sigma$ is clear, and $(\Id_{l-1}\star \e\star \Id_{n+1-l})\circ \sigma =0$ for $3\leq l\leq n+1$ implies $(\Id_1\star e_n)\circ \sigma = \sigma$.  Thus, $\sigma$ may be regarded as a morphism $\im(e_n)\rightarrow \im(\Id_1\star e_n)$.

Compute:
\begin{eqnarray*}
\pi\circ \sigma & =& (\Id_1\star \e\star \Id_{n-1})\circ (\Id_1\star e_n)\circ \sigma\\
&=& (\Id_1\star \e\star \Id_{n-1})\circ \sigma\\
&=& (\Id_1\star \e\star \Id_{n-1})\circ (\Delta\star \Id_{n-1})\circ e_n\\
&=& e_n.
\end{eqnarray*}
It follows that $\sigma\circ \pi$ is an idempotent endomorphism of $\im(\Id_1\star e_n)$ whose image is isomorphic to $\im(e_n)$.  The complementary idempotent is
\begin{eqnarray*}
\Id_1\star e_n - \sigma\circ \pi 
& =& \Id_1\star e_n -  (\Delta\star \Id_{n-1})\circ e_n\circ \pi\\
& =& \Id_1\star e_n -  (\Delta\star \Id_{n-1})\circ \pi\\
& =& \Id_1\star e_n -  (\Delta\star \Id_{n-1})\circ (\Id_1\star \e\star \Id_{n-1})\circ (\Id_1\star e_n)\\
& =& (e_2\star \Id_{n-1})\circ (\Id_1\star e_n)\\
&=& e_{n+1}
\end{eqnarray*}
Thus, the image of $\Id_1\star e_n$ (that is to say, $C\star \im e_n$) is isomorphic to the direct sum of $\im e_n$ and $\im e_{n+1}$, as claimed.
\end{proof}

\begin{remark}
With only slightly more work, it is possible to show that $(\im e_n)\star (\im e_m)\cong \im e_{n+m}\oplus \im e_{n+m-1}$ for all $n,m\geq 1$.
\end{remark}

\subsection{The normalized categorical idempotents}
\label{ss:normalized idempts}

Recall that $(\Id_{l-1}\star \e \star \Id_{n-l})\circ e_n$ is zero for $2\leq l\leq n$.  When $l=1$ this morphism is generally nonzero.

\begin{definition}\label{def:del n}
Let $\d_n: \im e_n\rightarrow \im e_{n-1}$ be the morphism given by $\d_n:=(\e\star \Id_{n-1})\circ e_n$, for $n\geq 1$.
\end{definition}
Note that $e_{n-1}\circ \d_n = \d_n$ by an application of Lemma \ref{lemma:e absorption}, so $\d_n$ can indeed be regarded as a morphism $\im e_n \rightarrow \im e_{n-1}$, as claimed.

\begin{lemma}\label{lemma:del squared}
We have $\d_{n-1}\circ \d_n = 0$ for all $n\geq 2$.
\end{lemma}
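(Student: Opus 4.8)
The plan is to compute $\d_{n-1}\circ \d_n$ directly from the definition $\d_n = (\e\star \Id_{n-1})\circ e_n$ and to exploit the factorization of $e_n$ through $e_2$ in the first two tensor factors. Unravelling, $\d_{n-1}\circ \d_n = (\e\star\Id_{n-2})\circ e_{n-1}\circ (\e\star \Id_{n-1})\circ e_n$. Since $e_{n-1}\circ \d_n = \d_n$ (the remark following Definition \ref{def:del n}), the middle $e_{n-1}$ is redundant, so $\d_{n-1}\circ\d_n = (\e\star\e\star\Id_{n-2})\circ e_n$, where I am reading $\e\star\Id_{n-1}$ composed with $\e\star\Id_{n-2}$ as applying the counit to the first two strands in succession, i.e.\ as $(\e\star\e\star\Id_{n-2})$ up to the unitor isomorphisms I will suppress throughout. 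So the whole claim reduces to showing $(\e\star\e\star\Id_{n-2})\circ e_n = 0$.

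First I would handle the base case $n=2$: here I must show $(\e\star\e)\circ e_2 = 0$, where $e_2 = \Id_2 - \Delta\circ(\Id_1\star\e)$. Now $(\e\star\e)\circ e_2 = (\e\star\Id_{\one})\circ(\Id_1\star\e)\circ e_2$, and by Lemma \ref{lemma:en kills ep} (case $n=2$, $l=2$) we have $(\Id_1\star\e)\circ e_2 = 0$, hence $(\e\star\e)\circ e_2 = 0$. For general $n\geq 2$, recall from \eqref{eq:en} that $e_n = (e_2\star\Id_{n-2})\circ(\text{stuff})$; more precisely $e_n$ begins (reading right-to-left in the composition, but leftmost factor) with $(e_2\star\Id_{n-2})$. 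Then
\[
(\e\star\e\star\Id_{n-2})\circ e_n = (\e\star\e\star\Id_{n-2})\circ (e_2\star\Id_{n-2})\circ(\cdots) = \big(((\e\star\e)\circ e_2)\star\Id_{n-2}\big)\circ(\cdots),
\]
which vanishes by the $n=2$ case. This is the whole argument; alternatively one can phrase it uniformly by applying Lemma \ref{lemma:en kills ep} with $l=2$ to get $(\Id_1\star\e\star\Id_{n-2})\circ e_n = 0$ and then precomposing $(\e\star\Id_{n-2})$ with this, using that $\e\star\Id_{n-1}$ followed by picking out the counit on the second strand equals $(\Id_1\star\e\star\Id_{n-2})$ followed by $(\e\star\Id_{n-2})$.

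The only genuinely delicate point — more bookkeeping than mathematics — is the order in which the two counits get applied: writing $\d_{n-1}\circ\d_n$ out carefully, the inner $\d_n$ applies $\e$ to the first strand of $C^{\star n}$, landing in $C^{\star n-1}$, and then $\d_{n-1}$ applies $\e$ to the first strand of \emph{that}, which is the original second strand. So the composite is $(\e\star\e\star\Id_{n-2})$ precomposed with $e_n$, not $(\e\star\e\star\Id_{n-2})$ with the two $\e$'s in some other slots, and I must be careful that the appeal to $e_{n-1}\circ\d_n=\d_n$ is legitimate before collapsing. I expect no real obstacle: once the reduction to $(\e\star\e\star\Id_{n-2})\circ e_n=0$ is in place, Lemma \ref{lemma:en kills ep} (or the explicit form of $e_2$) finishes it immediately. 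The proof will be three or four lines of display math with the unitors suppressed.
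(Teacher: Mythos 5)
Your proposal is correct and is exactly the paper's argument: the paper's entire proof is the one-line computation $\d_{n-1}\circ\d_n = (\e\star\e\star\Id_{n-2})\circ e_n = 0$, and your write-up simply supplies the details it leaves implicit (dropping the middle $e_{n-1}$ via $e_{n-1}\circ\d_n=\d_n$, then killing $(\e\star\e\star\Id_{n-2})\circ e_n$ by Lemma \ref{lemma:en kills ep} with $l=2$). Your bookkeeping of which strands the two counits hit is also right, so there is nothing to fix.
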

\begin{proof}
Compute: $\d_{n-1}\circ \d_n = (\e\star \e \star \Id_{n-2})\circ e_n = 0$.
\end{proof}

\begin{definition}\label{def:normalized A}
Retain notation as in \S \ref{ss:en}. Let $\hat{\AB}_C,\hat{\PB}_C\in\KC^-(\Kar(\AS))$ denote the complexes
\[
\hat{\AB}_C \ \ = \ \ \cdots \buildrel \d_3 \over \longrightarrow  \im e_2 \buildrel \d_2 \over \longrightarrow \im e_1 \buildrel \d_1 \over \longrightarrow \underline{\one},
\]
\[
\hat{\PB}_C \ \ = \ \ \cdots \buildrel \d_4 \over \longrightarrow  \im e_4 \buildrel \d_3 \over \longrightarrow \im e_2 \buildrel \d_2 \over \longrightarrow \underline{\im e_1}.
\]
\end{definition}

Note that we do not yet assume $C$ is fully counital, only that $\Id_C\star \e$ admits a right inverese.
\begin{lemma}\label{lemma:normalized A kills C}
We have $C\star \hat{\AB}_C\simeq 0$.
\end{lemma}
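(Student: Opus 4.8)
The strategy is to build an explicit contracting homotopy for $C\star \hat{\AB}_C$ using the map $\Delta:C\rightarrow C\star C$ (the right inverse to $\Id_C\star\e$), mimicking the proof of Lemma \ref{lemma:A kills C} but adapted to the normalized chain groups $\im e_n$. The complex $C\star\hat{\AB}_C$ has chain objects $C\star\im e_n$ in (cohomological) degree $-n$, with differential $\Id_C\star\d_n$. Using Lemma \ref{lemma:C times im e}, each $C\star\im e_n$ splits as $\im e_{n+1}\oplus\im e_n$ via the maps $\pi,\sigma$ and the complementary idempotents $e_{n+1}$, $\sigma\circ\pi$. The guiding idea is that, under this splitting, the differential $\Id_C\star\d_n$ becomes (up to sign and lower-order corrections) an ``identity-like'' map matching the $\im e_n$-summand of $C\star\im e_n$ with part of $C\star\im e_{n-1}$, so the whole complex telescopes to something contractible.

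Concretely, first I would record the precise form of the inclusion/projection for the decomposition of Lemma \ref{lemma:C times im e}: write $\iota_{n+1}=e_{n+1}:\im e_{n+1}\hookrightarrow C\star\im e_n$ and $p_{n+1}=e_{n+1}$ for the projection, and $\iota_n'=\sigma=(\Delta\star\Id_{n-1})\circ e_n$, $p_n'=\pi=(\Id_1\star\e\star\Id_{n-1})\circ(\Id_1\star e_n)$ for the other summand; note $\pi\circ\sigma=e_n$. Next I would define a homotopy $h^{-n}:C\star\im e_n\rightarrow C\star\im e_{n+1}$ by $h := \iota_{n+1}'\circ p_{n+1}'' \cdots$ — more honestly, the natural candidate is $h=\Delta\star\Id_n$ restricted appropriately, i.e. the composite $C\star\im e_n \xrightarrow{\pi} \im e_n \xrightarrow{\sigma} C\star\im e_{n}$... the point being to use $\Delta$ in the leftmost slot to split off a copy matching the target of the differential. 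Then I would verify the homotopy identity $(\Id_C\star\d_{n+1})\circ h^{-n} + h^{-n-1}\circ(\Id_C\star\d_n) = \Id_{C\star\im e_n}$, which reduces, via $\d_n=(\e\star\Id_{n-1})\circ e_n$ and the absorption Lemma \ref{lemma:e absorption}, to the single counit relation $(\Id_C\star\e)\circ\Delta=\Id_C$ together with Lemma \ref{lemma:en kills ep}. A cleaner packaging: observe $\hat{\AB}_C$ is a direct summand (in $\KC^-(\Kar\AS)$) of $\AB_C$ — indeed, this is essentially Theorem \ref{thm:normalized}, stated later, that $\AB_C\simeq\hat{\AB}_C$ — so if one is permitted to invoke it, $C\star\hat{\AB}_C$ is a summand of $C\star\AB_C\simeq 0$ by Lemma \ref{lemma:A kills C}, hence contractible. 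But since Lemma \ref{lemma:normalized A kills C} is presumably an \emph{ingredient} in proving Theorem \ref{thm:normalized}, the honest route is the explicit homotopy.

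The main obstacle is bookkeeping the signs and the ``lower-order corrections'' in the homotopy identity: after applying $\Id_C\star\d_n$ and re-splitting $C\star\im e_{n-1}$ via Lemma \ref{lemma:C times im e}, the $\im e_n$-component of the output does not land cleanly on the nose — one must use that $e_n\circ\pi=\pi$ and $\sigma\circ e_n=\sigma$ (proved inside Lemma \ref{lemma:C times im e}) plus Lemma \ref{lemma:en kills ep} to kill the unwanted terms $(\Id_{l-1}\star\e\star\Id_{n-l})\circ e_n$ for $l\geq 2$. I expect that once the homotopy $h^{-n}$ is taken to be precisely $(\Delta\star\Id_{n-1})\circ e_n\circ(\Id_1\star\e\star\Id_{n-1})\circ(\Id_1\star e_n)$ composed suitably, the verification collapses, because $\d_n$ followed by this $h$ rebuilds $\sigma\circ\pi$ while $h$ followed by $\d_{n+1}$ rebuilds $e_{n+1}$, and their sum is $\Id_1\star e_n=\Id_{C\star\im e_n}$ by the complementary-idempotent computation at the end of Lemma \ref{lemma:C times im e}. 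So the real content is just organizing that identity; no genuinely new idea beyond Lemmas \ref{lemma:en kills ep}, \ref{lemma:e absorption}, and \ref{lemma:C times im e} is needed.
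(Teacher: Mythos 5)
Your proposal is correct and is essentially the paper's argument in a different packaging: the paper computes that, in the splitting $C\star \im e_n\cong \im e_{n+1}\oplus \im e_n$ of Lemma \ref{lemma:C times im e}, the differential $\Id_C\star\d_n$ has matrix $\smMatrix{0&\Id_{\im e_n}\\0&0}$ (using Lemmas \ref{lemma:en kills ep} and \ref{lemma:e absorption} for the vanishing entries and the computation $(\Id_1\star\d_n)\circ\sigma=e_n$ for the identity entry) and then cancels the summands in pairs, which is exactly the Gaussian-elimination form of your contracting homotopy built from the complementary-idempotent identity $e_{n+1}+\sigma\circ\pi=\Id_1\star e_n$. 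The only slip is your explicit formula for $h$, which as written is an endomorphism of $C^{\star n+1}$ rather than a map to $C\star\im e_{n+1}$; the correct homotopy is $h_n=(\Delta\star\Id_n)\circ e_{n+1}$, i.e.\ $\smMatrix{0&0\\\Id&0}$ in the splitting.
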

\begin{proof}
By Lemma \ref{lemma:C times im e} the chain groups of $C\star \hat{\AB}_C$ split as direct sums:
\[
C\star \im e_n \cong \im e_{n+1}\oplus \im e_n.
\]
We claim that when expressed in terms of this direct sum decomposition the differential is
\begin{equation}\label{eq:deln matrix}
\Id_C\star \d_n \ = \ \sqmatrix{0&\Id_{\im e_n}\\0&0} \ : \im e_{n+1}\oplus \im e_n \rightarrow \im e_n \oplus \im e_{n-1}
\end{equation}
for $n\geq 2$.  The case $n=1$ is similar (ignore the $\im e_{n-1}$ summand and delete the bottom row of the above matrix).


First, observe that
\[
(\Id_1\star \d_n)\circ e_{n+1} = 0
\]
by Lemma \ref{lemma:en kills ep}, so the first column of the matrix for $\Id_1\star \d_n$ is zero, and
\[
e_n\circ (\Id_1\star \d_n) = (\Id_1\star \d_n),
\]
by Lemma \ref{lemma:e absorption}, so the second row of the matrix for $\Id_1\star \d_n$ is zero (the ``image'' of $\Id_1\star \d_n$ is completely contained $\im e_n$).  Thus, the matrix for $\Id_1\star \d_n$ has the form
\[
\Id_1\star \d_n \ = \ \sqmatrix{0&f\\0&0} \ : \im e_{n+1}\oplus \im e_n \rightarrow \im e_n \oplus \im e_{n-1}
\]
for some morphism $f:\im e_n \rightarrow \im e_n$.  To describe $f$ explicitly we use the explicit inclusion of the direct summand $\sigma:=(\Delta\star \Id_{n-1})\circ e_n:\im e_{n} \rightarrow \im \Id_1\star e_n$ from the proof of Lemma \ref{lemma:C times im e}.   Compute:
\[
f = (\Id_1\circ \d_n)\circ \sigma = (\Id_1\circ \e\star \Id_{n-1})\circ (\Delta\star \Id_{n-1})\circ e_n = e_n.
\]
This shows that $f$ is the identity of $\im e_n$ and proves \eqref{eq:deln matrix}.

Thus, all the terms summands in $C\star \hat{\AB}_C$ cancel in pairs and so $C\star \hat{\AB}_C\simeq 0$.
\end{proof}

\begin{theorem}\label{thm:normalized}
If $C\in \AS$ is counital then $\AB_C\simeq \hat{\AB}_C$ and $\PB_C\simeq \hat{\PB}_C$ (Definition \ref{def:normalized A}).
\end{theorem}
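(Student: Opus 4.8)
The plan is to identify $\hat{\AB}_C$ and $\hat{\PB}_C$ by means of the uniqueness clause of Theorem \ref{thmA}(4). First I would observe that the four complexes $\AB_C,\PB_C,\hat{\AB}_C,\hat{\PB}_C$ are all objects of $\KC^-(\Kar(\AS))$, that $(C,\e)$ remains a counital object of $\Kar(\AS)$, and that $\AB_C,\PB_C$ are unchanged when re-formed in $\Kar(\AS)$; so it is enough to prove the two homotopy equivalences inside $\KC^-(\Kar(\AS))$. Then I would apply Theorem \ref{thmA}(4) not to $\Kar(\AS)$ itself but to $\Kar(\AS)$ equipped with the opposite tensor product $X,Y\mapsto Y\star X$: under this reversal the bar complexes $\PB_C,\AB_C$ are unchanged up to isomorphism (their differentials acquire overall signs only), while ``left $C$-projective'' is interchanged with ``right $C$-projective'' and ``$\AB\star C$'' with ``$C\star\AB$''. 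By the uniqueness clause it then suffices to verify that the pair $(\hat{\AB}_C,\hat{\PB}_C)$ satisfies the mirrored forms of (U1)--(U3): (i) $\one\simeq(\hat{\AB}_C\to\hat{\PB}_C)$; (ii) $C\star\hat{\AB}_C\simeq 0$; and (iii) $\hat{\PB}_C$ is a complex of right $C$-projectives.

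Each of these is quick. Condition (ii) is exactly Lemma \ref{lemma:normalized A kills C}. For (iii): each chain object $\im e_n$ with $n\geq 1$ is a retract of $C^{\star n}=C^{\star n-1}\star C$, so $\im e_n\leq_L C$ and hence $\im e_n$ is right $C$-projective by Lemma \ref{lemma:X leq C}. For (i): reading off from Definitions \ref{def:del n} and \ref{def:normalized A} --- and using $e_1=\Id_C$, so $\im e_1=C$ and $\d_1=\e$ --- one sees that $\hat{\AB}_C$ is the mapping cone of the chain map $\hat{\PB}_C\to\one$ that is $\e$ in degree zero, so rotating the associated distinguished triangle $\hat{\PB}_C\to\one\to\hat{\AB}_C\to\hat{\PB}_C[1]$ in the triangulated category $\KC^-(\Kar(\AS))$ yields $\one\simeq(\hat{\AB}_C\to\hat{\PB}_C)$; alternatively one writes down the degree-one map $\hat{\AB}_C\to\hat{\PB}_C$ equal to $\pm\Id$ on each shared summand $\im e_n$ and cancels the resulting contractible pairs. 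The mirrored Theorem \ref{thmA}(4) then delivers $\hat{\AB}_C\simeq\AB_C$ and $\hat{\PB}_C\simeq\PB_C$.

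I do not expect a genuine obstacle here: the real substance has already been packaged into the two preceding lemmas, the splitting $C\star\im e_n\cong\im e_{n+1}\oplus\im e_n$ (Lemma \ref{lemma:C times im e}) and the contractibility $C\star\hat{\AB}_C\simeq 0$ (Lemma \ref{lemma:normalized A kills C}). The single point that wants care is the left/right bookkeeping. Because the idempotents $e_n$ were manufactured from a chosen right inverse $\Delta$ of $\Id_C\star\e$, what one obtains for free is the \emph{left} annihilation $C\star\hat{\AB}_C\simeq 0$ together with the right $C$-projectivity of the $\im e_n$; since conditions (U2) and (U3) of Theorem \ref{thmA}(4) are phrased instead in terms of ``$\AB\star C$'' and of \emph{left} $C$-projectives, one is forced to invoke the left--right mirror of the theorem rather than the theorem as stated. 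The opposite-handed fact $\hat{\AB}_C\star C\simeq 0$ need not be proved directly: it follows, once $\hat{\AB}_C\simeq\AB_C$ is established, from Lemma \ref{lemma:A kills C}.
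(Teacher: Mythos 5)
Your proof is correct and follows essentially the same route as the paper: the substance in both cases is Lemma \ref{lemma:normalized A kills C} together with the right $C$-projectivity of the $\im e_n$, fed into the uniqueness characterization of Theorem \ref{thmA}(4). The only cosmetic difference is that the paper, rather than invoking a mirrored form of the uniqueness clause, re-runs its sandwich argument directly by computing $\AB_C\star\hat{\AB}_C$ two ways (it is $\simeq\hat{\AB}_C$ because $C\star\hat{\AB}_C\simeq 0$, and $\simeq\AB_C$ because $\AB_C\star C\simeq 0$), which sidesteps the left/right bookkeeping you were careful about.
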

\begin{proof}
Since $\hat{\AB}_C$ kills $C$ from the left we have $\AB_C\star \hat{\AB}_C\simeq \hat{\AB}_C$.  Since $\AB_C$ kills $C$ from the right we have $\AB_C\star \hat{\AB}_C\simeq {\AB}_C$, hence $\AB_C\simeq \hat{\AB}_C$.  Then $\hat{\PB}_C\simeq \PB_C$ since an idempotent coalgebra is uniquely determined by its complement up to isomorphism.
\end{proof}

\subsubsection{The usual normalized two-sided bar complex}
\label{ss:usual normalized bar}
It is interesting to compare the abstract normalized complex $\hat{\AB}_C$ and its complement $\hat{\PB}_C$ with the usual normalized two-sided bar complex.

Let $A$ be a $\k$-algebra.  Let $\AS$ be the category of $A,A$-bimodules with tensor product $\star = \otimes_A$.  We will abbreviate by writing $\otimes=\otimes_\k$ (this is \emph{not} part of the monoidal structure on $\AS$).  The \emph{normalized} two-sided bar complex associated to $A$ is the complex
\[
\cdots \rightarrow A\otimes (A/\k)^{\otimes 2} \otimes A\rightarrow A\otimes (A/\k) \otimes A \rightarrow  \underline{A\otimes A}
\]
with the usual bar differential.

We claim that $A\otimes (A/\k)^{\otimes n-1}\otimes A$ is isomorphic to the image of the idempotent $e_n$ acting on $C^{\star n} = A^{\otimes n+1}$.

\begin{definition}  For each subset $S\subset \{1,\ldots,k\}$ let $f_S$ denote the bimodule endomorphism of $A\star A^{\otimes  k} \otimes A$ which ``shifts to the right the factors in positions $i\in S$''.   More precisely $f_S(a_0\otimes\cdots \otimes a_{k+1})$ is a simple tensor of the form $b_0\otimes \cdots\otimes b_{k+1}$ where
\begin{enumerate}
\item $b_0=a_0$.
\item if $i\not\in S$ and $i-1\not \in S$, then $b_i=a_i$.
\item if $i\in S$ and $i-1\not \in S$, then $b_i=1$.
\item if $i\not\in S$ and $i-1 \in S$, then $b_i=a_{i-1}a_i$.
\item if $i\in S$ and $i-1 \in S$, then $b_i=a_{i-1}$.
\end{enumerate}
\end{definition}

\begin{example}
If $k=5$ and $S=\{1,2,5\}$, then
\[
f_S(a_0\otimes \cdots\otimes a_6) = a_0\otimes 1\otimes a_1\otimes a_2a_3\otimes a_4\otimes 1\otimes a_5a_6.
\]
The original simple tensor can also be expressed as the component-wise product
\[
(1\otimes a_1\otimes a_2\otimes 1\otimes 1\otimes a_5\otimes 1)(a_0\otimes 1\otimes 1\otimes a_3\otimes a_4\otimes 1\otimes a_6),
\]
and the image under $f_S$ is visualized as shifting the first factor to the right, then multiplying:
\[
(1\otimes 1\otimes a_1\otimes a_2\otimes 1\otimes 1\otimes a_5)(a_0\otimes 1\otimes 1\otimes a_3\otimes a_4\otimes 1\otimes a_6).
\]
\end{example}

\begin{lemma}\label{lemma:E idemp}
The idempotent $e_{k+1}$ acting on $A\otimes A^{\otimes k}\otimes A$ satisfies
\begin{enumerate}
\item $e_{k+1}(a_0\otimes \cdots \otimes a_{k+1}) = \sum_{S\subset \{1,\ldots,k\}} (-1)^{|S|} f_S(a_0\otimes \cdots\otimes a_{k+1}).$
\item $e_{k+1}(a_0\otimes \cdots \otimes a_{k+1}) = a_0\otimes\cdots\otimes a_{k+1} + (\cdots)$, where $(\cdots)$ denotes a sum of simple tensors $b_0\otimes\cdots b_{k+1}$ with $b_i=1$ for some $1\leq i\leq k$.
\item if $a_i =1$ for some $1\leq i\leq k$, then $e_{k+1}(a_0\otimes \cdots \otimes a_{k+1})=0$.
\end{enumerate}
\end{lemma}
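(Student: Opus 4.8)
The plan is to establish (1) first — it is the substantive assertion — and then read off (2) and (3) from the resulting explicit formula.

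For (1), I would unwind the recursive definition \eqref{eq:en} of $e_{k+1}$ in the bimodule language. Since $\Delta(b\otimes c) = b\otimes 1\otimes c$ and $\e(b\otimes c) = bc$, a one-line check shows that on $C^{\star 2} = A\otimes A\otimes A$ the operator $\Delta\circ(\Id_1\star\e)$ sends $a_0\otimes a_1\otimes a_2$ to $a_0\otimes 1\otimes a_1a_2$; hence, under the identification $C^{\star k+1}\cong A^{\otimes k+2}$, the $j$-th factor $\Id_{j-1}\star e_2\star \Id_{k-j}$ in \eqref{eq:en} is precisely $\Id - f_{\{j\}}$, where $f_{\{j\}}$ is the singleton ``shift'' operator from the Definition above (it replaces $a_0\otimes\cdots\otimes a_{j-1}\otimes a_j\otimes a_{j+1}\otimes\cdots$ by $a_0\otimes\cdots\otimes a_{j-1}\otimes 1\otimes a_ja_{j+1}\otimes\cdots$). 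Therefore
\[
e_{k+1} \;=\; (\Id - f_{\{1\}})\circ (\Id - f_{\{2\}})\circ\cdots\circ(\Id - f_{\{k\}}).
\]
Expanding the product, the term indexed by a subset $S = \{i_1 < i_2 < \cdots < i_m\}\subseteq\{1,\ldots,k\}$ is $(-1)^m\,f_{\{i_1\}}\circ f_{\{i_2\}}\circ\cdots\circ f_{\{i_m\}}$, so (1) is reduced to the identity $f_{\{i_1\}}\circ\cdots\circ f_{\{i_m\}} = f_S$ for $i_1 < \cdots < i_m$. By an immediate induction on $m$ this follows from the single claim: \emph{if $T\subseteq\{1,\ldots,k\}$ and $j<\min(T)$, then $f_{\{j\}}\circ f_T = f_{T\cup\{j\}}$}. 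I would prove this claim by direct inspection of the five cases defining $f_S$: because $j<\min(T)$, the tensor $f_T(a_0\otimes\cdots\otimes a_{k+1})$ agrees with $a_0\otimes\cdots\otimes a_{k+1}$ in positions $0,\ldots,j$, and applying $f_{\{j\}}$ afterwards only changes positions $j$ and $j+1$ (setting position $j$ to $1$ and left-multiplying position $j+1$ by the entry formerly in position $j$); comparing this with $f_{T\cup\{j\}}(a_0\otimes\cdots\otimes a_{k+1})$ entry by entry — distinguishing only whether $j+1\in T$ — yields the equality.

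Parts (2) and (3) are then consequences of (1). For (2): the $S=\emptyset$ summand of the formula in (1) is $f_\emptyset(a_0\otimes\cdots\otimes a_{k+1}) = a_0\otimes\cdots\otimes a_{k+1}$, whereas for every nonempty $S$ the entry of $f_S(a_0\otimes\cdots\otimes a_{k+1})$ in position $\min(S)\in\{1,\ldots,k\}$ equals $1$ (case (3) of the Definition). For (3): assume $a_i=1$ with $1\le i\le k$ and consider the fixed-point-free involution $S\mapsto S\triangle\{i\}$ (symmetric difference) on the subsets of $\{1,\ldots,k\}$. A short case-check — again only positions $i$ and $i+1$ are affected, and there the factor $a_i$ that separates the two formulas is trivial — shows $f_S(a_0\otimes\cdots\otimes a_{k+1}) = f_{S\triangle\{i\}}(a_0\otimes\cdots\otimes a_{k+1})$. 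Since $|S|$ and $|S\triangle\{i\}|$ have opposite parity, the $2^k$ terms of $e_{k+1}(a_0\otimes\cdots\otimes a_{k+1})$ cancel in pairs, so $e_{k+1}(a_0\otimes\cdots\otimes a_{k+1})=0$.

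The main obstacle is entirely organizational: keeping the case analysis for the composition law $f_{\{j\}}\circ f_T = f_{T\cup\{j\}}$ (and for the cancellation in (3)) under control when working from the five-clause definition of $f_S$. I expect the most economical route is to use the ``shift-and-multiply'' description from the Example preceding the Lemma — write $f_S(a_0\otimes\cdots\otimes a_{k+1})$ as the componentwise product of the tensor carrying the entries $a_i$ for $i\notin S$ (and $1$ elsewhere) with the one-step right shift of the tensor carrying the entries $a_i$ for $i\in S$ (and $1$ elsewhere) — under which both the composition law for nested subsets and the $a_i = 1$ cancellation become essentially transparent.
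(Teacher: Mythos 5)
Your proof is correct, and it is complete where the paper simply says ``Exercise.'' The key identifications all check out: $\Id_{j-1}\star e_2\star\Id_{k-j}$ really is $\Id - f_{\{j\}}$ under $C^{\star k+1}\cong A^{\otimes k+2}$, the composition law $f_{\{j\}}\circ f_T = f_{T\cup\{j\}}$ for $j<\min(T)$ holds by the entry-by-entry comparison you describe (only positions $j$ and $j+1$ need attention), and the pairing $S\mapsto S\triangle\{i\}$ gives the cancellation in (3). This is the natural unwinding of Definition \ref{def:en} in the bimodule example and is surely the argument the author had in mind.
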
 
\begin{proof}
Exercise.
\end{proof}

\begin{proposition}
The image of $e_{k+1}$ acting on $A\otimes A^{\otimes k}\otimes A$ is isomorphic to $A\otimes(A/\k)^{\otimes k}\otimes A$.
\end{proposition}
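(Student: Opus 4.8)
The plan is to exhibit an explicit pair of mutually inverse maps between $\im e_{k+1}$ and $A\otimes (A/\k)^{\otimes k}\otimes A$ in the Karoubi envelope, using the combinatorial description of $e_{k+1}$ from Lemma \ref{lemma:E idemp}. First I would set $C^{\star (k+1)} = A^{\otimes (k+1)}$ and note that $A\otimes(A/\k)^{\otimes k}\otimes A$ is by definition the quotient of $A^{\otimes (k+2)}$ by the sub-bimodule $N$ spanned by simple tensors $a_0\otimes\cdots\otimes a_{k+1}$ having $a_i\in\k$ (equivalently $a_i=1$, after scaling) for some $1\le i\le k$. The candidate map $p\colon A^{\otimes(k+2)}\to A\otimes(A/\k)^{\otimes k}\otimes A$ is the canonical quotient projection, and the candidate map $s\colon A\otimes(A/\k)^{\otimes k}\otimes A\to \im e_{k+1}\subset A^{\otimes(k+2)}$ is the one induced by $e_{k+1}$ itself: part (3) of Lemma \ref{lemma:E idemp} says $e_{k+1}$ kills $N$, so $e_{k+1}$ descends to a well-defined bimodule map on the quotient, landing in $\im e_{k+1}$ by idempotency.

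The key steps, in order, are: (i) check $N$ is a sub-$A,A$-bimodule, so the quotient makes sense as an object of $\AS$; (ii) invoke Lemma \ref{lemma:E idemp}(3) to get a well-defined bimodule map $\bar{e}_{k+1}\colon A\otimes(A/\k)^{\otimes k}\otimes A\to A^{\otimes(k+2)}$ factoring $e_{k+1}$; (iii) observe $\im\bar e_{k+1}\subseteq \im e_{k+1}$ and that $e_{k+1}\circ\bar e_{k+1}=\bar e_{k+1}$, so $\bar e_{k+1}$ is a morphism into the Karoubi object $\im e_{k+1}$; (iv) compute $p\circ \bar e_{k+1} = \Id$ on $A\otimes(A/\k)^{\otimes k}\otimes A$ using Lemma \ref{lemma:E idemp}(2), since the correction terms $(\cdots)$ all lie in $N$ and hence vanish in the quotient; (v) compute $\bar e_{k+1}\circ p = e_{k+1}$ as an endomorphism of $A^{\otimes(k+2)}$, which restricted to $\im e_{k+1}$ is the identity of that object. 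Steps (iv) and (v) together say $p$ and $\bar e_{k+1}$ are mutually inverse isomorphisms between $\im e_{k+1}$ and $A\otimes(A/\k)^{\otimes k}\otimes A$ in $\Kar(\AS)$.

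I expect the main obstacle to be bookkeeping: verifying that $\bar e_{k+1}$ is genuinely well defined on the quotient and that no subtlety arises from $A$ possibly not being flat over $\k$ (so that $A\otimes(A/\k)^{\otimes k}\otimes A$ really is $A^{\otimes(k+2)}/N$ with $N$ as described, rather than something involving Tor terms). One should phrase the argument entirely in terms of the quotient $A^{\otimes(k+2)}/N$ so this issue is sidestepped. The remaining computations — that $p\circ\bar e_{k+1}=\Id$ and $\bar e_{k+1}\circ p = e_{k+1}$ — are immediate from parts (2) and (1) of Lemma \ref{lemma:E idemp} respectively, observing in each case that every ``correction'' simple tensor produced by $e_{k+1}$ has a factor equal to $1$ in some interior position $1\le i\le k$ and therefore represents $0$ in $A\otimes(A/\k)^{\otimes k}\otimes A$; so these are genuinely routine once the framework is in place.
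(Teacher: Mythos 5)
Your proposal is correct and follows essentially the same route as the paper: both use Lemma \ref{lemma:E idemp}(3) to see that $e_{k+1}$ descends to a map out of $A\otimes(A/\k)^{\otimes k}\otimes A \cong A^{\otimes(k+2)}/N$, and Lemma \ref{lemma:E idemp}(2) for the reverse direction (the paper phrases this as injectivity of the descended surjection, you phrase it as exhibiting the quotient projection as an explicit two-sided inverse — the same computation). Your flatness worry is indeed a non-issue, since right-exactness of $\otimes_\k$ identifies $A\otimes(A/\k)^{\otimes k}\otimes A$ with the quotient $A^{\otimes(k+2)}/N$ without any Tor correction.
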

\begin{proof}
The idempotent $e_{k+1}$ annihilates all simple tensors in $a_0\otimes \cdots \otimes a_{k+1}$ in which $a_i=1$ for some $1\leq i\leq k$ by part (3) of Lemma \ref{lemma:E idemp}, so the projection $A\otimes A^{\otimes k}\otimes A\twoheadrightarrow \im (e_{k+1})$ descends to a surjective map $A\otimes(A/\k)^{\otimes k}\otimes A\twoheadrightarrow \im(e_{k+1})$.  We have to show that this map is injective.  If $e_{k+1}(z)=0$ then $z$ is a sum of simple tensors $a_0\otimes \cdots\otimes a_{k+1}$ with $a_i=1$ for some $1\leq i\leq k$, by part (2) of Lemma \ref{lemma:E idemp}.  Thus, $z$ is zero in $A\otimes(A/\k)^{\otimes k}\otimes A$.  This completes the proof.
\end{proof}

\begin{proposition}
The normalized complex $\hat{\PB}_{A\otimes_\k A}$ is isomorphic to the usual normalized two-sided bar complex.
\end{proposition}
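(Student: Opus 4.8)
The plan is to reduce everything to the preceding proposition, which already matches the chain objects of $\hat\PB_{A\otimes_\k A}$ with those of the normalized bar complex, leaving only the differentials to be identified — and the point is that Lemma~\ref{lemma:en kills ep} does this for free. First I would recall from Example~\ref{ex:bimod} that $\PB_{A\otimes_\k A}$ \emph{is} the unnormalized two-sided bar complex $B_\bullet$: its degree-$(1-n)$ term is $B_n := C^{\star n} = A^{\otimes_\k (n+1)}$, and under $C = A\otimes_\k A$ with $\e$ the multiplication, the summand $\partial_i := \Id^{\star i}\star\e\star\Id^{\star(n-1-i)}$ of the differential $C^{\star n}\to C^{\star(n-1)}$ is precisely the $i$-th bar face map $a_0\otimes\cdots\otimes a_n\mapsto a_0\otimes\cdots\otimes a_i a_{i+1}\otimes\cdots\otimes a_n$, so the differential of $\PB_{A\otimes_\k A}$ is the bar differential $b = \sum_{i=0}^{n-1}(-1)^i\partial_i$. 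I would also recall the classical facts that the degenerate subcomplex $D_\bullet\subseteq B_\bullet$ (with $D_n$ spanned by the simple tensors having $a_j = 1$ for some $1\le j\le n-1$) is indeed a subcomplex, that the normalized two-sided bar complex is by definition $B_\bullet/D_\bullet$, and that the preceding proposition identifies $\im e_n$ with $B_n/D_n \cong A\otimes(A/\k)^{\otimes(n-1)}\otimes A$ via $x\mapsto [x]$ (its proof supplies the inverse $[z]\mapsto e_n(z)$, via Lemma~\ref{lemma:E idemp}).

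It then remains to identify the differentials on the $\im e_n$. By Definition~\ref{def:del n}, $\d_n = (\e\star\Id_{n-1})\circ e_n = \partial_0\circ e_n$, so $\d_n(x) = \partial_0(x)$ for $x\in\im e_n$, and $\d_n(x)\in\im e_{n-1}$ since $e_{n-1}\circ\d_n = \d_n$. On the other hand, Lemma~\ref{lemma:en kills ep} says exactly that $\partial_j\circ e_n = 0$ for $1\le j\le n-1$, so the full bar differential satisfies $b(x) = \partial_0(x) = \d_n(x)$ for every $x\in\im e_n$. This shows at once that $\im e_\bullet$ is a subcomplex of $B_\bullet$, that $\hat\PB_{A\otimes_\k A} = (\im e_\bullet,\d_\bullet)$ is this subcomplex, and hence that the composite
\[
\hat\PB_{A\otimes_\k A}\ \hookrightarrow\ \PB_{A\otimes_\k A} = B_\bullet\ \twoheadrightarrow\ B_\bullet/D_\bullet, \qquad x\mapsto [x],
\]
is a chain map; being bijective in each degree by the preceding proposition, it is an isomorphism of complexes.

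The only point requiring any care — and the conceptual crux — is the apparent mismatch that $\d_n$ is built from the single face map $\partial_0$, whereas the bar differential is an alternating sum of all $n$ face maps; this is dissolved by Lemma~\ref{lemma:en kills ep}, which guarantees that $e_n$ has already annihilated $\partial_1,\dots,\partial_{n-1}$. Everything else — that $D_\bullet$ is a subcomplex and $B_\bullet\to B_\bullet/D_\bullet$ a chain map, the bookkeeping of degrees, and the triviality that a degreewise-bijective chain map is an isomorphism — is standard.
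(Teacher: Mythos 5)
Your proof is correct and follows the same route as the paper: the paper's own proof simply cites the preceding proposition for the identification of chain groups and declares the agreement of differentials "an exercise," which is exactly the step you carry out. Your solution of that exercise is the right one — $\d_n=\partial_0\circ e_n$ together with Lemma \ref{lemma:en kills ep} (which kills $\partial_j\circ e_n$ for $1\le j\le n-1$) shows the bar differential restricts to $\d_n$ on $\im e_n$, so the degreewise bijection from the preceding proposition is a chain isomorphism.
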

\begin{proof}
As we've seen already, the chain groups of $\hat{\PB}_{A\otimes_\k A}$ satisfy $(\hat{\PB}_{A\otimes_\k A})^{-k} = A\otimes (A/\k)^{\otimes k}\otimes A$, which are the chain groups of the normalized two-sided bar complex as well.  To check that the differentials agree is an exercise.
\end{proof}

\subsubsection{Remark on Grothendieck groups}
\label{sss:K0}
This subsection is an informal discussion concerning Grothendieck groups.  We would like to consider the class of $\hat{\PB}_C$ in the Grothendieck group.  But since $\hat{\PB}_C$ is an infinite complex one encounters the usual problem that the relevant Grothendieck group is zero.  There are various ways around this problem in the examples of interest.  Throughout this (very informal and certainly incorrect as written) subsection we assume that such issues are dealt with, so we may consider the Euler characteristic of $[\hat{\PB}_C]$ as a well defined element of (an appropriate completion of) $K_0(\Kar(\AS))$.    See \cite{AchStr}.

Let $X_n:=\im e_n$ in $\Kar(\AS)$, for $n\geq 1$.  We have $X_1=C$ and generally $C\star X_n\cong X_{n+1}\oplus X_n$.  Thus, on the level of Grothendieck groups we have
\begin{equation}\label{eq:Xn in K0}
[X_n] = [C] ([C] - 1)^{n-1}
\end{equation}
for all $n\geq 1$ (by an easy induction).  Thus, the Euler characteristic of $\hat{\PB}_C$ is
\[
[\hat{\PB}_C] \ =\ \sum_{n\geq 1} (-1)^{n-1} [X_n] \ = \ [C] \sum_{n\geq 1} (-1)^{n-1}([C]-1)^{n-1}.
\]
One is certainly tempted to sum the geometric series, obtaining
\[
[\hat{\PB}_C] \ =\  \frac{[C]}{1+([C]-1)} \ = \ [\one],
\]
hence
\[
\hat{\AB}_C = 0.
\]

\renewcommand{\l}{\lambda}
The conclusion would then be that we are only able to categorify the most boring idempotents (zero and one)!  Thankfully, this conclusion is incorrect, essentially because $[C]$ may be a zero divisor in the Grothendieck group, hence completing with respect to the ideal generated by $([C]-1)$ is potentially a very destructive operation.

However, in some important examples, $C$ is quasi-idempotent in the sense that $C\star C \cong \l C$, where $\l$ is some ``scalar object'' (for instance a direct sum of copies of $\one$ with shifts, when this makes sense).  In this instance $C^2 = \l C$ implies that $X_n= (\l-1)^{n-1}C$, and we conclude that
\[
[\hat{\PB}_C] \ =\   \frac{[C]}{1+([\l]-1)}  = \frac{[C]}{[\l]}
\]
in $K_0(\AS)\llbracket [\l] -1 \rrbracket$.  The scalar object $\l$ is very often not a zero divisor in $K_0(\AS)$, so adjoining $[\l]\inv$ to $K_0(\AS)$ or completing with respect to the ideal generated by $[\l]-1$ is typically an innocuous operation (in contrast to adjoining $[C]\inv$).  In other words, $\hat{\PB}_C$ categorifies the idempotent obtained from $[C]$ by rescaling.

\begin{example}\label{ex:dividing by 2}
Let $R=\k[\a]$, regarded as a $\Z$-graded algebra, where $\a$ is a formal indeterminate of degree 2, and let $\AS$ be the additive monoidal category freely generated by the $\Z$-graded bimodules $\one=R$ and $C:=\k[\a]\otimes_{\k[\a^2]} \k[a]$ and their shifts.  This category is monoidal via $\star = \otimes_R$.

Then $C$ is a coalgebra object and satisfies $C\star C \cong (1+q^2)C$, where a polynomial in $q$ denotes the corresponding direct sum of copies of $\one$ with shifts.  One can then check directly that $X_n = q^{2n-2}C$, and the normalized complex $\hat{\PB}_C$ is given by
\[
\hat{\PB}_C \ \ = \ \ \cdots \rightarrow q^4 C \rightarrow q^2 C \rightarrow \underline{C}
\]
in which the maps alternate between multiplication by $\a\otimes 1 - 1\otimes \a$ and $\a\otimes 1+ 1\otimes \a$.  On the level of Grothendieck groups, this becomes
\[
[\hat{\PB}_C] \ = \ \frac{1}{1+q^2} [C].
\]
\end{example}

\subsection{The infinite power construction}
\label{ss:infinite powers}

Next we discuss the expression of categorical idempotents as ``infinite tensor powers'' of some given complex $\FB$. Let $C\in \AS$ be an object with counit $\e:C\rightarrow \one$ as above. Let $\FB:=\Cone(\e)$, which is the complex $C\rightarrow \underline{\one}$ with $\one$ in degree 0, $C$ in degree $-1$, and differential given by $\e$.  The inclusion of the degree zero chain object gives a chain map $\iota:\one\rightarrow \FB$, from which we may construct the following directed system
\begin{equation}\label{eq:directed system}
\begin{tikzpicture}[baseline=-.3em]
\node (a) at (0,0) {$\one$};
\node (b) at (2,0) {$\FB$};
\node (c) at (4,0) {$\FB^{\star 2}$};
\node (d) at (6,0) {$\cdots$};
\path[->,>=stealth',shorten >=1pt,auto,node distance=1.8cm]
(a) edge node {$\phi_0$} (b)
(b) edge node {$\phi_1$} (c)
(c) edge node {$\phi_2$} (d);
\end{tikzpicture},\qquad\quad \phi_k :=\iota\star \Id_{\FB}^{\star k}.
\end{equation}

\begin{definition}\label{def:F to the infty}
Let $\FB^{\star \infty}$ denote the homotopy colimit of the directed system \ref{eq:directed system}. 
\end{definition}
For a precise model for this homotopy colimit one may use the \emph{mapping telescope}.  A priori $\FB^{\star \infty}$ lives in $\KC^-(\AS')$ for some cocompletion $\AS'$ of $\AS$.  But as we will see $\FB^{\star \infty}$ is homotopy equivalent to a complex in $\KC^-(\AS)$.

\begin{lemma}\label{lemma:powers of F}
Let $\FB:=\Cone(\e)$ as above.  Then
\begin{equation}\label{eq:F simplified}
\FB^{\star n}  \ \ \simeq  \ \ (\im e_n \rightarrow \im e_{n-1}\rightarrow\cdots \rightarrow \im e_1 \rightarrow \underline{\one})
\end{equation}
in which the differential $\im e_k \rightarrow \im e_{k-1}$ is  $\d_k$ from Definition \ref{def:del n}.
\end{lemma}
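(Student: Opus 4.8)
The plan is to induct on $n$, using the recursion $\FB^{\star n}=\FB\star\FB^{\star(n-1)}$ together with the compatibility of mapping cones with $\star$ recorded in \S\ref{ss:cxs}. For $n=0$ both sides equal $\underline{\one}$, and for $n=1$ the complex $(\im e_1\xrightarrow{\d_1}\underline{\one})$ is literally $\FB=\Cone(\e)$, since $e_1=\Id_C$ and $\d_1=\e$. For the inductive step, writing $\FB=\Cone(\e\colon C\rightarrow\one)$ and applying the identity $\Cone(f)\star Z\cong\Cone(f\star\Id_Z)$ from \S\ref{ss:cxs} gives
\[
\FB^{\star n}\ =\ \Cone(\e)\star\FB^{\star(n-1)}\ \cong\ \Cone\bigl(\e\star\Id_{\FB^{\star(n-1)}}\colon C\star\FB^{\star(n-1)}\longrightarrow\FB^{\star(n-1)}\bigr).
\]
By the inductive hypothesis $\FB^{\star(n-1)}\simeq Y:=(\im e_{n-1}\xrightarrow{\d_{n-1}}\cdots\xrightarrow{\d_1}\underline{\one})$; since tensoring with $C$ and forming cones both preserve homotopy equivalence, it suffices to identify $\Cone(\e\star\Id_Y\colon C\star Y\rightarrow Y)$ up to homotopy.

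Next I would decompose $C\star Y$. Its chain object in degree $-k$ is $C\star\im e_k$, which for $1\le k\le n-1$ splits as $\im e_{k+1}\oplus\im e_k$ by Lemma \ref{lemma:C times im e} (and for $k=0$ equals $C\star\one\cong\im e_1$), and with respect to these splittings the differential $\Id_C\star\d_k$ is exactly the matrix $\smMatrix{0&\Id_{\im e_k}\\0&0}$ computed in the proof of Lemma \ref{lemma:normalized A kills C}, equation \eqref{eq:deln matrix}. Hence, for each $1\le k\le n-1$, the $\im e_k$ summand of $C\star\im e_k$ (in degree $-k$) maps isomorphically onto the $\im e_k$ summand of $C\star\im e_{k-1}$ (in degree $-(k-1)$), and these are the only nonzero components of the differential. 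It follows that $C\star Y$ splits as a direct sum of complexes: the $n-1$ contractible two-term complexes $(\im e_k\xrightarrow{\Id}\im e_k)$ for $1\le k\le n-1$, together with the single object $\im e_n$ placed in homological degree $-(n-1)$ with zero differential. In particular $\im e_n[n-1]$ is a direct summand subcomplex of $C\star Y$ and $C\star Y\simeq\im e_n[n-1]$.

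Cancelling the contractible summands from the source of the cone (Gaussian elimination, noting that $\im e_n[n-1]$ receives no differential from the cancelled summands, so $\e\star\Id_Y$ picks up no correction on it), we obtain $\FB^{\star n}\simeq\Cone(g_1\colon\im e_n[n-1]\rightarrow Y)$, where $g_1$ is the restriction of $\e\star\Id_Y$ to the surviving summand. A chain map out of the one-term complex $\im e_n[n-1]$ is just a morphism $\im e_n\rightarrow Y^{-(n-1)}=\im e_{n-1}$ killed by $\d_{n-1}$, and here it is $(\e\star\Id_{C^{\star n-1}})$ restricted to the summand $\im e_n\subset C\star\im e_{n-1}\subset C^{\star n}$; since that summand is the image of the idempotent $e_n=(e_2\star\Id_{n-2})\circ(\Id_1\star e_{n-1})$ on $C^{\star n}$ (this is exactly the computation in the proof of Lemma \ref{lemma:C times im e}), this morphism equals $(\e\star\Id_{n-1})\circ e_n=\d_n$. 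Taking the cone therefore yields
\[
\FB^{\star n}\ \simeq\ \Cone(\d_n\colon\im e_n[n-1]\rightarrow Y)\ =\ (\im e_n\xrightarrow{\d_n}\im e_{n-1}\xrightarrow{\d_{n-1}}\cdots\xrightarrow{\d_1}\underline{\one}),
\]
which completes the induction.

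The one step needing genuine care is the final identification: that after the cancellation the induced connecting map $\im e_n\rightarrow\im e_{n-1}$ is precisely $\d_n$, and not some other $\d_{n-1}$-closed chain map (there are in general many of these, and they are pairwise non-homotopic since $\im e_n[n-1]$ is concentrated in one degree). This requires knowing exactly how the surviving copy of $\im e_n$ sits inside $C\star\im e_{n-1}$, but that bookkeeping — identifying it as $\im e_n$ included via the idempotent $e_n$ — is already carried out inside the proof of Lemma \ref{lemma:C times im e}, so no new computation is needed. Sign subtleties are a non-issue here: the cone--tensor compatibility from \S\ref{ss:cxs} is an isomorphism of complexes, and the differential of $C\star Y$ carries no Koszul sign because $C$ is concentrated in degree $0$.
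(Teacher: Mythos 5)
Your proof is correct and follows essentially the same route as the paper: induct on $n$, use the cone–tensor compatibility to reduce to computing $C\star\FB^{\star(n-1)}$, split each $C\star\im e_k$ via Lemma \ref{lemma:C times im e} with the differential matrix from Lemma \ref{lemma:normalized A kills C}, cancel the contractible pairs to leave $\im e_n[n-1]$, and identify the induced connecting map as $\d_n$ through the inclusion given by the idempotent $e_n$. Your extra care about which closed chain map survives the cancellation is exactly the point the paper also addresses in its final paragraph, so nothing is missing.
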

\begin{proof}
We prove this by induction on $n\geq 1$.  The base case $n=1$ is trivially true.  Assume by induction that \eqref{eq:F simplified} holds.  Tensoring on the left with $C$ yields
\[
C\star \FB^{\star n} \ \ \simeq \ \ (C\star \im e_{n} \rightarrow \cdots \rightarrow C\star \im e_1 \rightarrow \underline{C}).
\]
Each chain group is isomorphic to $C\star \im e_{k} \cong \im e_{k+1} \oplus \im e_{k}$ and the differential is determined explicitly from Lemma \ref{lemma:normalized A kills C}.  The terms $\im e_k \rightarrow \im e_k$ cancel in pairs for $1\leq k\leq n$.  The only remaining term is $\im e_{n+1}$ in cohomological degree $-n$.  This shows that
\begin{equation}\label{eq:C times FFFF}
C \star \FB^{\star n} \simeq \im e_{n+1}[n].
\end{equation}
Now, since $\FB = \Cone(C\rightarrow \one)$, we can write $\FB^{\star n+1}$ as the cone of a map $C\star \FB^{\star n}\rightarrow \FB^{\star n}$ or, equivalently as the cone of a map
\[
\im e_{n+1}[n]\rightarrow (\im e_n \rightarrow \cdots \rightarrow \im e_1 \rightarrow \underline{\one}).
\]
Such a cone is necessarily a complex of the form
\[
(\im e_{n+1}\rightarrow \im e_n \rightarrow \cdots \rightarrow \im e_1 \rightarrow \underline{\one}).
\]
To complete the computation we must check that the new (leftmost) component of the differential is as claimed.  This component is the idempotent $e_{n+1}$, regarded as the inclusion $\im e_{n+1}\rightarrow C\star \im e_n$ followed by the appropriate component of the differential $\e\star \Id_n : C\star F^{\star n} \rightarrow F^{\star n}$.  This composition is $\d_{n+1}$ as claimed.  This completes the proof.
\end{proof}

\begin{remark}
The proof actually establishes something stronger, namely that $\iota\star \Id_{\FB^{\star k}}:\FB^{\star k} \rightarrow \FB^{\star k+1}$ corresponds under \eqref{eq:F simplified} to the obvious inclusion of a subcomplex.
\end{remark}

The following is now an easy corollary.
\begin{theorem}\label{thm:powers}
We have $\FB^{\star \infty}\simeq \hat{\AB}_C$.  In particular if $C$ is counital then $\AB_C\simeq \hat{\AB}_C\simeq \FB^{\star \infty}$.\qed
\end{theorem}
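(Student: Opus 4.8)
The plan is to feed Lemma~\ref{lemma:powers of F}, together with the stronger statement recorded in the remark immediately following it, into the mapping-telescope model of the homotopy colimit; this reduces the theorem to the elementary fact that the telescope of a tower of degreewise-split inclusions of complexes is homotopy equivalent to the colimit of that tower.

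First I would fix, for each $n\geq 0$, the homotopy equivalence $g_n\colon \FB^{\star n}\ \simeq\ Y_n$ furnished by Lemma~\ref{lemma:powers of F}, where $Y_n:=\bigl(\im e_n\to \cdots\to \im e_1\to \underline{\one}\bigr)$ is precisely the brutal truncation of $\hat{\AB}_C$ retaining the terms in cohomological degrees $\geq -n$. Thus $Y_0=\underline{\one}$ and $Y_n$ sits inside $Y_{n+1}$ as the evident subcomplex, with cokernel $\im e_{n+1}$ concentrated in degree $-n-1$; in particular $Y_n\hookrightarrow Y_{n+1}$ is a split monomorphism in each degree. Next I would invoke the remark after Lemma~\ref{lemma:powers of F}: under the $g_n$ the transition map $\phi_n=\iota\star\Id_{\FB}^{\star n}$ of the directed system \eqref{eq:directed system} corresponds to the inclusion $Y_n\hookrightarrow Y_{n+1}$. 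To make this fully precise one rereads the induction proving Lemma~\ref{lemma:powers of F}, choosing $g_{n+1}$ inductively from $g_n$ via the cone presentation $\FB^{\star n+1}=\Cone(C\star\FB^{\star n}\to\FB^{\star n})$ and \eqref{eq:C times FFFF}, so that the resulting comparison ladder commutes on the nose. Hence the $g_n$ form a map of towers from \eqref{eq:directed system} to $\bigl(Y_0\hookrightarrow Y_1\hookrightarrow\cdots\bigr)$ that is a homotopy equivalence at each level.

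Passing to mapping telescopes, a levelwise homotopy equivalence of towers induces a homotopy equivalence of telescopes, so $\FB^{\star\infty}\simeq \operatorname{Tel}(Y_0\hookrightarrow Y_1\hookrightarrow\cdots)$. Since each $Y_n\hookrightarrow Y_{n+1}$ is a degreewise-split injection, the canonical map from this telescope to $\bigcup_n Y_n$ is a homotopy equivalence; and $\bigcup_n Y_n=(\cdots\to\im e_2\to\im e_1\to\underline{\one})=\hat{\AB}_C$. This yields $\FB^{\star\infty}\simeq\hat{\AB}_C$. The ``in particular'' follows at once: when $C$ is counital, Theorem~\ref{thm:normalized} gives $\AB_C\simeq\hat{\AB}_C$, so $\AB_C\simeq\hat{\AB}_C\simeq\FB^{\star\infty}$.

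The main point requiring care is the homological bookkeeping invoked at the end --- that a levelwise homotopy equivalence of towers induces an equivalence of mapping telescopes, and that the telescope of a tower of degreewise-split monomorphisms collapses to its colimit. Both are standard and valid in the homotopy category of complexes over an arbitrary additive category, but since $\AS$ is not assumed abelian I would phrase them via explicit homotopies rather than appeal to a model structure, and either cite them or include a short verification. Everything else is bookkeeping already carried out by Lemma~\ref{lemma:powers of F} and its remark.
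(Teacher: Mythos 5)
Your proposal is correct and is essentially the argument the paper intends: the paper states the theorem as "an easy corollary" of Lemma \ref{lemma:powers of F} and the remark following it, leaving implicit exactly the telescope bookkeeping you spell out (levelwise equivalences of towers inducing an equivalence of telescopes, and the collapse of the telescope of degreewise-split inclusions onto the union $\hat{\AB}_C$). Your care in choosing the equivalences $g_n$ compatibly so the comparison ladder commutes is a reasonable way to make the omitted details precise.
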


\section{Examples}
\label{s:examples}

In \S \ref{ss:recognition} we establish a result on how to recognize when an idempotent (co)algebra in $\KC^-(\AS)$ is isomorphic to $\PB_C$ or $\AB_C$ for some $C$ (Lemma \ref{lemma:recognition}) , which may be useful in future applications.  In \S \ref{ss:cat TL}  we show how to obtain the categorified Temperley-Lieb idempotents from \cite{CH12} using techniques in this paper.

\subsection{Recognizing idempotents of the form $\PB_C$ and $\AB_C$}
\label{ss:recognition}


\begin{lemma}
Let $\PB$ be an idempotent coalgebra in $\Ch^-(\AS)$ and $\AB=\Cone(\PB\buildrel\boldsymbol{\e}\over\rightarrow \one)$ its complement.  Suppose $X,Y\in \Ch^-(\AS)$ satisfy $\PB\star X\simeq X$ and $\AB\star Y\simeq Y$.  Then
\[
\Hom_{\Ch(\AS)}(X,Y)\simeq 0
\]
for all $X,Y\in \Ch^-(\AS)$.
\end{lemma}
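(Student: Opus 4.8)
The plan is to exploit the complementarity of $\PB$ and $\AB$ together with the fact that $\PB$ is an idempotent coalgebra, reducing the $\Hom$ complex to a form that is manifestly contractible. The key observation is that $X\simeq \PB\star X$ and $Y\simeq \AB\star Y$, so
\[
\Hom_{\Ch(\AS)}(X,Y)\ \simeq\ \Hom_{\Ch(\AS)}(\PB\star X,\ \AB\star Y).
\]
Now I would use the tensor--hom style adjunction heuristic: since $\boldsymbol{\e}\star\Id_{\PB}$ is a homotopy equivalence (because $\PB$ is an idempotent coalgebra, with homotopy inverse $\boldsymbol{\Delta}$), the functor $\PB\star(-)$ is, up to homotopy, a projection onto the subcategory $\{Z : \PB\star Z\simeq Z\}$. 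Dually $\AB\star(-)$ projects onto $\{Z : \AB\star Z\simeq Z\}=\{Z : \PB\star Z\simeq 0\}$.

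Concretely, I would argue as follows. Precomposition with the homotopy equivalence $\boldsymbol{\e}\star\Id_X : \PB\star X\to X$ (wait --- more carefully, $X\simeq\PB\star X$ via the structure maps) lets me replace $X$ by $\PB\star X$, and similarly $Y$ by $\AB\star Y$. Then I would like to ``move a copy of $\PB$ across'' the $\Hom$: using the coalgebra comultiplication $\boldsymbol{\Delta}:\PB\to\PB\star\PB$ and counit $\boldsymbol{\e}:\PB\to\one$, one shows $\Hom(\PB\star X, Z)\simeq \Hom(\PB\star X,\PB\star Z)$ for any $Z$, i.e. that $\PB\star(-)$ acts as the identity on the target as well when the source already has a $\PB$ in front. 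Hence
\[
\Hom_{\Ch(\AS)}(X,Y)\ \simeq\ \Hom_{\Ch(\AS)}(\PB\star X,\ \PB\star\AB\star Y).
\]
But $\PB\star\AB\simeq 0$ by complementarity (Theorem \ref{thmA} / Definition \ref{def:idemp alg}), so $\PB\star\AB\star Y\simeq 0$, and a $\Hom$-complex into a contractible complex is contractible. This gives $\Hom_{\Ch(\AS)}(X,Y)\simeq 0$.

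The main obstacle is making the step $\Hom(\PB\star X, Z)\simeq\Hom(\PB\star X,\PB\star Z)$ precise without an internal hom: one must produce an explicit homotopy equivalence on $\Hom$-complexes, built from $\boldsymbol{\Delta}$, $\boldsymbol{\e}$, and the contracting homotopies witnessing $\boldsymbol{\e}\star\Id_{\PB}\simeq\Id_{\PB}\star\boldsymbol{\e}\simeq$ (homotopy inverse of) $\boldsymbol{\Delta}$. The cleanest route is probably to avoid this entirely: observe that $\Hom_{\Ch(\AS)}(X,Y)$, for $X$ with $\PB\star X\simeq X$ and $Y$ with $\AB\star Y\simeq Y$, can be computed as $\Hom(\PB\star X, Y)$, and then note that any chain map $f:\PB\star X\to Y$ factors up to homotopy through $\PB\star\AB\star Y\simeq 0$ because $f\simeq f\circ(\boldsymbol{\e}\star\Id_X)$ composed appropriately with the equivalence $Y\simeq\AB\star Y$ forces $f$ to factor through $\PB\star\AB\star Y$. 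I would set this up as a homotopy fiber sequence: applying $\Hom(-,Y)$ to the distinguished triangle $\AB\to\one\to\PB\to\AB[1]$ smashed with $X$, and using $\Hom(\AB\star X,Y)$ where $\AB\star X\simeq 0$ (since $\PB\star X\simeq X$), pins down $\Hom(\PB\star X,Y)\simeq\Hom(X,Y)$; then do the symmetric manipulation on the $Y$ side. Being careful about which of the observations in \S\ref{ss:idemp alg and coalg} are invoked --- in particular Observation stating $\PB\star X\simeq X \Leftrightarrow \AB\star X\simeq 0$ --- is the only real bookkeeping; the homological content is just ``$\PB$ and $\AB$ see disjoint parts of $\KC^-(\AS)$, so there are no maps between their images.''
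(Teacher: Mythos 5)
Your reduction to $\Hom_{\Ch(\AS)}(\PB\star X,\PB\star\AB\star Y)$ would indeed finish the proof, but the step it hinges on --- $\Hom_{\Ch(\AS)}(\PB\star X,Z)\simeq\Hom_{\Ch(\AS)}(\PB\star X,\PB\star Z)$ --- is essentially the lemma itself in disguise, so the argument is circular. Concretely, the comparison map between these two Hom-complexes is induced by $\boldsymbol{\e}\star\Id_Z:\PB\star Z\rightarrow Z$, whose cone is $\AB\star Z$; hence the cone of the comparison map is (a shift of) $\Hom_{\Ch(\AS)}(\PB\star X,\AB\star Z)$, and its vanishing is exactly the statement being proved, applied to $X'=\PB\star X$ and $Y'=\AB\star Z$. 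Your attempted direct construction via $\boldsymbol{\Delta}$ does not rescue this: of the two roundtrips, only $f\mapsto(\boldsymbol{\e}\star\Id)\circ(\Id_{\PB}\star f)\circ(\boldsymbol{\Delta}\star\Id)\simeq f$ comes for free from the counit axiom; the other roundtrip, $g\mapsto(\Id_{\PB}\star((\boldsymbol{\e}\star\Id)\circ g))\circ(\boldsymbol{\Delta}\star\Id)\simeq g$, is a nontrivial naturality/coassociativity statement that is not available at this point (the paper explicitly flags that even $\boldsymbol{\e}\star\Id_{\PB}\simeq\Id_{\PB}\star\boldsymbol{\e}$ is a ``nontrivial consequence of the definitions''). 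Your fallback via distinguished triangles only re-derives the equivalence $\Hom_{\Ch(\AS)}(X,Y)\simeq\Hom_{\Ch(\AS)}(\PB\star X,\AB\star Y)$, which by itself says nothing: it is an equivalence between two complexes, neither of which is yet known to be contractible.

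The missing idea is a one-line observation about that very equivalence. Under the identifications $\PB\star X\simeq X$ (via $\boldsymbol{\e}\star\Id_X$) and $Y\simeq\AB\star Y$ (via $\boldsymbol{\eta}\star\Id_Y$), the induced map on Hom-complexes sends $f$ to $(\boldsymbol{\eta}\circ\boldsymbol{\e})\star f$. But $\boldsymbol{\eta}\circ\boldsymbol{\e}:\PB\rightarrow\one\rightarrow\Cone(\boldsymbol{\e})$ is the composite of two consecutive maps of a mapping cone, hence null-homotopic; writing $\boldsymbol{\eta}\circ\boldsymbol{\e}=d(h)$, the homotopy $f\mapsto h\star f$ shows the whole comparison map is null-homotopic. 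A homotopy equivalence that is also null-homotopic forces both its source and target to be contractible, which is the conclusion. So you should keep your setup through the identification $\Hom_{\Ch(\AS)}(X,Y)\simeq\Hom_{\Ch(\AS)}(\PB\star X,\AB\star Y)$, discard the attempt to move $\PB$ across the Hom, and instead verify that this equivalence is $(\boldsymbol{\eta}\circ\boldsymbol{\e})\star(-)$ and is therefore zero up to homotopy.
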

For experts: in the statement above the category $\Ch^-(\AS)$ can be replaced by any pretriangulated dg monoidal category.
\begin{proof}
From the hypotheses, we have $\AB\star X\simeq 0$, i.e.~ $\Cone(\boldsymbol{\e})\star X\simeq 0$, hence $\boldsymbol{\e}\star \Id_X$ gives a homotopy equivalence $\PB\star X\rightarrow \one\star X\cong X$.  Similarly, the unit map $\boldsymbol{\eta}:\one\rightarrow \AB$ gives a homotopy equivalence $Y\rightarrow \AB\star Y$.  Pre- and post-composing with these homotopy equivalences defines a homotopy equivalence
\begin{equation}\label{eq:he who is zero}
\Hom_{\Ch(\AS)}(X,Y)\rightarrow \Hom_{\Ch(\AS)}(\PB\star X,\AB \star Y). 
\end{equation}
This homotopy equivalence sends $f\in \Hom_{\Ch(\AS)}(X,Y)$ to $(\boldsymbol{\eta}\circ \boldsymbol{\e})\star f$.  But $\boldsymbol{\eta}\circ \boldsymbol{\e}$ is null-homotopic, being the composition of canonical maps associated to the mapping cone: $\PB\buildrel\boldsymbol{\e}\over\rightarrow \one \rightarrow \Cone(\boldsymbol{\e})$.  It follows that \eqref{eq:he who is zero} is null-homotopic, hence both source and target are contractible complexes.
\end{proof}

It follows that $\Hom_{\Ch(\AS)}(X,\AB\star Y)\simeq 0$ for all $X\in \Ch^-(\AS)$ satisfying $\PB\star X\simeq X$ (with $Y\in \Ch^-(\AS)$ arbitrary).  Consequently $\Hom_{\Ch(\AS)}(X,\PB\star Y)\simeq \Hom_{\Ch(\AS)}(X,Y)$ for all such $X$.  This will be used in the proof of the following.

\begin{lemma}\label{lemma:recognition}
Let $\AS$ be a $\k$-linear additive idempotent complete monoidal category.  Let $\PB$ be an idempotent coalgebra in $\Ch^-(\AS)$ and $\AB=\Cone(\PB\rightarrow \one)$ its complement.  Suppose that $\BS\subset \AS$ is a subcategory such that $\PB\in \Ch^-(\BS)$ and $X\star \AB\simeq 0 \simeq \AB\star X$ for all $X\in \BS$.  Then
\begin{enumerate}
\item The complexes $\PB$ and $\AB$ are homotopy equivalent to complexes supported in non-positive homological degrees.
\item Assuming as in (1) that $\PB$ and $\AB$ are supported in non-positive homological degrees, the object $C=\PB^0$ is counital and $\PB\simeq \PB_C$, $\AB\simeq \AB_C$.
\end{enumerate}
\end{lemma}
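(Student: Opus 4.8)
The plan is to exploit the uniqueness characterization (U1)--(U3) of Theorem \ref{thmA}: if I can produce a counital object $C$ together with a homotopy equivalence $\one\simeq(\AB\to\PB)$, with $\AB\star C\simeq 0$ and $\PB$ a complex of left $C$-projectives, then Theorem \ref{thmA}(4) immediately gives $\PB\simeq\PB_C$ and $\AB\simeq\AB_C$. The natural candidate for $C$ is the degree-zero chain object $\PB^0$, once I know $\PB$ is supported in non-positive degrees.

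For part (1), I would first argue that $\PB$ is homotopy equivalent to a complex supported in non-positive homological degrees. The hypothesis is that $\PB\in\Ch^-(\BS)$ and every object of $\BS$ is annihilated by $\AB$ on both sides; equivalently, every object of $\BS$ satisfies $\PB\star X\simeq X$ and $X\star\PB\simeq X$, i.e. $X$ is a left (and right) $C$-projective-type object for the coalgebra $\PB$. Writing $\PB$ as a twist of the direct sum $\bigoplus_k \PB^k[-k]$ of its chain objects (as in the proof of Lemma \ref{lemma:Cannihilation}), each $\PB^k\in\BS$, so $\PB\star\PB^k\simeq\PB^k$; by the remark following the first lemma of \S\ref{ss:recognition}, $\Hom_{\Ch(\AS)}(\PB^k,\PB\star Y)\simeq\Hom_{\Ch(\AS)}(\PB^k,Y)$ for all $Y$. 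The idea is then to use this together with the Gaussian-elimination / minimality machinery (homological perturbation): since $\PB$ is an idempotent coalgebra, $\boldsymbol{\e}\star\Id_\PB$ and $\Id_\PB\star\boldsymbol{\e}$ are homotopy equivalences, and the comultiplication $\boldsymbol{\Delta}:\PB\to\PB\star\PB$ is a section of these; one can use $\boldsymbol\Delta$ to show that the ``positive part'' of $\PB$ is contractible as a direct summand, because positive-degree chain objects map trivially (up to homotopy) into $\PB\star(-)$ in the relevant sense. Concretely, I expect one shows the canonical map from the brutal truncation $\tau^{\le 0}\PB$ (or a mapping cone thereof) to $\PB$ is a homotopy equivalence, using that $\PB^k$ for $k>0$ contributes nothing. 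The same argument applies to $\AB$ since $\AB=\Cone(\PB\to\one)$ and $\one$ sits in degree $0$.

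For part (2), assume now $\PB$ and $\AB$ are supported in degrees $\le 0$, and set $C:=\PB^0$. Since $\PB\in\Ch^-(\BS)$ we have $C\in\BS$, so by hypothesis $\AB\star C\simeq 0\simeq C\star\AB$, which is exactly (U2). The counit $\e:C\to\one$ should be taken to be the degree-zero component of the coalgebra structure map $\boldsymbol{\e}:\PB\to\one$, i.e. $\e:=\boldsymbol{\e}^0:\PB^0\to\one$; I must check this is genuinely a counit in the sense of Definition \ref{def:counit}, i.e. that $\e\star\Id_C$ and $\Id_C\star\e$ admit right inverses in $\AS$ (not just up to homotopy). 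This is where the comultiplication $\boldsymbol\Delta:\PB\to\PB\star\PB$ enters: its degree-$(0,0)$ component gives a map $C\to C\star C$, and one reads off from the identity $(\Id_\PB\star\boldsymbol{\e})\circ\boldsymbol\Delta\simeq\Id_\PB$ in degree zero that $(\Id_C\star\e)\circ\Delta^{0,0}=\Id_C$ on the nose (a homotopy in degree zero between maps of objects of $\AS$ is an equality, since there are no degree $-1$ hom spaces between objects concentrated in degree $0$ — more carefully, one uses that $\PB^1=0$ so the relevant homotopy term vanishes). Similarly on the left. Finally, for (U3) I must show $\PB$ is a complex of left $C$-projectives: each chain object $\PB^k$ lies in $\BS$, hence is annihilated by $\AB$, hence by Lemma \ref{lemma:Cannihilation} (or rather its proof, read backwards) is a complex — in fact an object — homotopy equivalent to something built from $C$; but actually one wants the sharper statement that $\PB^k$ is literally a retract of $C^{\star m}$ for some $m$, or at least of $C\star(\text{something})$. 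The cleanest route: $\AB\star\PB^k\simeq 0$ gives $\PB\star\PB^k\simeq\PB^k$, and since $\PB$ is a complex of objects in $\BS\subseteq\{X: X\le_L C\}$ (because $\AB\star X\simeq 0\Rightarrow X\le_L C$ is not quite automatic — one needs $X$ to be left $C$-projective), I expect the hypothesis should be massaged to directly yield $\BS\subseteq\{$left $C$-projectives$\}$ once $C=\PB^0$ is fixed. The main obstacle, and the step I'd spend the most care on, is precisely this: upgrading the homotopy-theoretic statements ``$\AB\star X\simeq 0$'' into the on-the-nose statements ``$X$ is left $C$-projective'' and ``$\e$ is a counit in $\AS$'' — i.e. controlling the passage from $\KC^-(\AS)$ back down to $\AS$, which works only because everything has been arranged to sit in degree $0$ and $\AS$ is idempotent complete so that the relevant retracts actually split.
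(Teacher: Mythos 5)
Your overall strategy---reduce to the uniqueness characterization (U1)--(U3) of Theorem \ref{thmA} with $C=\PB^0$---is the paper's, and for part (1) you correctly isolate the key computation $\Hom_{\Ch(\AS)}(\PB^k,\PB)\simeq\Hom_{\AS}(\PB^k,\one)$, concentrated in degree zero. But the mechanism you gesture at (``the brutal truncation is a homotopy equivalence'') is not a proof: the actual argument takes the top nonzero degree $N>0$, notes that the inclusion $\PB^N[-N]\hookrightarrow\PB$ is a closed degree-$N$ element of that hom complex and hence null-homotopic, so the differential $\PB^{N-1}\to\PB^N$ is split surjective and $\PB^N$ cancels by Gaussian elimination; one then iterates, using that the new top term is only a retract of $\PB^{N-1}$ (so not literally in $\BS$) but is still annihilated by $\AB$, since annihilation passes to retracts.

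The genuine gap is in your counit verification. You claim $(\Id_\PB\star \boldsymbol{\e})\circ\boldsymbol{\Delta}\simeq\Id_\PB$ forces $(\Id_C\star\e)\circ\Delta^{0,0}=\Id_C$ on the nose because ``$\PB^1=0$ so the relevant homotopy term vanishes.'' This is false: a homotopy $h\in\Hom^{-1}_{\Ch(\AS)}(\PB,\PB)$ has a component $h^0:\PB^0\to\PB^{-1}$, and the degree-zero part of $d_{\Ch(\AS)}(h)$ is $\d_\PB\circ h^0+h^1\circ\d_\PB$; only the second summand dies with $\PB^1$, so you only get $(\Id_C\star\e)\circ\Delta^{0,0}=\Id_C-\d_\PB\circ h^0$. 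The correct move is to use the hypothesis $\AB\star C\simeq 0$ directly: it says $\boldsymbol{\e}\star\Id_C:\PB\star C\to C$ is a homotopy equivalence, and any homotopy inverse $g:C\to\PB\star C$ is a degree-zero chain map out of a complex concentrated in degree zero, so $g$ lands in $(\PB\star C)^0=C\star C$ and the relation $(\boldsymbol{\e}\star\Id_C)\circ g\simeq\Id_C$ is an equality (there is no room for a homotopy, as $\Hom^{-1}_{\Ch(\AS)}(C,C)=0$); thus $(\e\star\Id_C)\circ g=\Id_C$ exactly, and symmetrically $C\star\AB\simeq 0$ splits $\Id_C\star\e$. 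The same observation resolves the (U3) issue you flagged but left hanging: for each $k$, $\AB\star\PB^k\simeq 0$ produces in the same way a map $\PB^k\to C\star\PB^k$ splitting $\e\star\Id_{\PB^k}$, so each chain object of $\PB$ is literally left $C$-projective---no massaging of the hypotheses is required. With these repairs your appeal to Theorem \ref{thmA}(4) goes through.
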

\begin{proof}
Let $N$ be the largest integer for which the chain object $\PB^N$ is nonzero.  Since $\PB^N\in \BS$ we have $\AB\star \PB^N\simeq 0$ hence $\PB\star \PB^N\simeq \PB^N$.  Then we compute the complex of homs
\[
\Hom_{\Ch(\AS)}(\PB^N, \PB)\simeq \Hom_{\Ch(\AS)}(\PB^N, \one) = \Hom_{\AS}(\PB^N,\one),
\]
which is supported in degree zero.  The inclusion of $\PB^N[-N]$ into $\PB$ can be though of as a degree $N$ closed element of the above hom complex.  If $N>0$ then this morphism must be null-homotopic, which implies that $\PB^N$ can be cancelled with a Gaussian elimination.  In degree $N-1$ the result of Gaussian elimination replaces $\PB^{N-1}$ with a direct summand of itself, but this summand is still annihilated by $\AB$, so if $N-1>0$ then this new term can be cancelled by the same argument.  Continuing in this fashion, we obtain a a complex which is homotopy equivalent to $\PB$, supported in non-positive homological degrees.  This proves (1).

Now, assume that the chain objects $\PB^k$ are zero for $k>0$, and let $C:=\PB^0$ be the chain object in degree zero.  Let $\e:C\rightarrow \one$ be the zeroth (and only nontrivial) component of the counit map $\PB\rightarrow \one$.

Then $\e$ is a counit by considering the right-most components of null-homotopies for $C\star \AB\simeq 0 \simeq \AB\star C$.  Statement (2) now follows from the uniqueness statement for $\PB_C$ and $\AB_C$.
\end{proof}

\begin{remark}
Note the asymmetry in the above statement: there is a dual version of Lemma \ref{lemma:recognition} in which the roles of unital and counital idempotents are reversed, \emph{provided that we also replace $\Ch^-(\AS)$ with $\Ch^+(\AS)$}.  
\end{remark}
\subsection{Categorification of Temperley-Lieb idempotents}
\label{ss:cat TL}
In this section we assume familiarity with \cite{B-N05,Kh02}, which describe a categorification of Temperley-Lieb algebras.

\begin{remark}
We will use the ``dotted cobordisms'' version of Bar-Natan's categories.  The details don't concern us here, and we refer to \cite{B-N05} for details (see also \S2.2 of \cite{quasiPaper} for a recap).
\end{remark}

For integers $n,m\geq 0$ of the same parity, let $\TLC_{m,n}'$ denote Bar-Natan's category of $m,n$-tangles and cobordisms.  An object of this category is an embedded 1-submanifold of $I\times I$, with boundary $(D_m\times\{1\})\cup (D_n\times\{0\})$, where $D_m\subset (0,1)$ is some prescribed set of $m$ points.  A morphism $T\rightarrow T'$ in this category is a formal $\Z$-linear combination of cobordisms in $I\times I\times I$, modulo relations.  See, e.g.~ Definition 2.3 in \cite{quasiPaper} for the precise relations.  The morphism spaces in $\TLC_{m,n}$ are graded by declaring the degree of a cobordism $\Sigma:T_0\rightarrow T_1$ to be
\[
\deg(\Sigma) = \#\{\text{saddle points}\} - \#\{\text{local maxima and minima}\} +2\#\{\text{dots}\}.
\]

Then we let $\TLC_{m,n}$ be the category obtained from the $\Z$-graded category $\TLC_{m,n}'$ by formally adjoining grading shifts and finite direct sums objects, denoted $\bigoplus_i q^{k_i} T_i$.  The convention for grading shifts is that a cobordism $\Sigma:T_0\rightarrow T_1$ with a single saddle point yields a degree zero map $q T_0\rightarrow T_1$.

The operation of composing tangles defines functors $\star : \TLC_{m,k}\otimes \TLC_{k,n}\rightarrow \TLC_{m,n}$, making the collection of categories $\TLC_{m,n}$ into a 2-category (the 1-morphism categories of which are the $\TLC_{m,n}$).
\begin{remark}
Alternately, we could define $\TLC_{m,n}$ to be the category of finitely generated graded projective modules over Khovanov's ring $H^{(m+n)/2}$.  Then the composition of tangles $\TLC_{m,k}\otimes \TLC_{k,n}\rightarrow \TLC_{m,n}$ corresponds to induction from $H^{(m+k)/2)}\otimes H^{(k+n)/2}$ to $H^{k+(m+n)/2}$, followed by tensoring with a special bimodule over $H^{(n+m)/2}, H^{k+(n+m)/2}$.
\end{remark}

There is a contravariant duality functor $(-)^\vee:\TLC_{m,n}\rightarrow \TLC_{n,m}$ which on the level of objects applies the transformation $(x,y)\mapsto (x,1-y)$ to all tangles, and on the level of morphisms applies $(x,y,z)\mapsto (x,1-y,1-z)$ to all cobordisms.  Then $q^{(n-m)/2} X^\vee$ is the right dual of $X$ in the usual sense of 2-categories.  Since $(X^\vee)^\vee=X$ one can say that $X$ and $X^\vee$ are biadjoint up to shift.

For each $1\leq i\leq n-1$ there is a distinguished tangle $T_{i,n}\in \TLC_{n,n-2}$ (the ``cup'').    A  \emph{cup tangle} is by definition a composition of these tangles.  Up to isotopy, these are indexed by certain binary sequences.

\begin{definition}
We put a partial order on $\{1,-1\}^n$ by declaring that two sequences sequences $\e=(\e_1,\ldots,\e_n)$ and $\nu=(\nu_1,\ldots,\nu_n)$ satisfy $\e\unlhd \nu$ if $\e_1+\cdots+\e_i \leq \nu_1+\cdots+\nu_i$ for all $1\leq i\leq n$.  A sequence $\e\in \{1,-1\}^n$ is called \emph{admissible} if $\e\unrhd 0$.

Given $\e\in \{1,-1\}$ we let $|\e|=\e_1+\cdots+\e_n$ and $r(\e) = \frac{1}{2}(n-|\e|)$.
\end{definition}

Associated to each admissible sequence $\e\in \{1,-1\}^n$ with $|\e|=k$ we have a cup tangle $T_\e\in \TLC_{n,k}$ defined as in the following example:\vskip7pt
\begin{equation}\label{eq:cupdiag}
T_\e \ \ = \ \  \begin{minipage}{2.3in}
\labellist
\small
\pinlabel $+$ at 2 65
\pinlabel $+$ at 16 65
\pinlabel $-$ at 32 65
\pinlabel $+$ at 48 65
\pinlabel $+$ at 64 65
\pinlabel $+$ at 80 65
\pinlabel $-$ at 96 65
\pinlabel $+$ at 112 65
\pinlabel $-$ at 128 65
\pinlabel $-$ at 144 65
\pinlabel $+$ at 160 65
\pinlabel $+$ at 177 65
\endlabellist
\includegraphics[scale=.9]{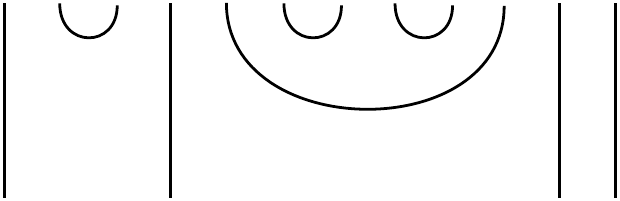}
\end{minipage},
\end{equation}
for which $\e = (1,1,-1,1,1,1,-1,1,-1,-1,1,1)$.  The strands in such a diagram are referred to as \emph{through strands} if they pass from the top to the bottom and \emph{turn-back strands} otherwise.   
Note that $r(\e)$ is the number of occurrences of $-1$ in $\e$, equivalently the number of turnback strands in $T_\e$.

To obtain the sequence of $\pm 1$ associated to a cup diagram as in \eqref{eq:cupdiag}, orient the ``through strands'' upward and the ``turn-back strands'' leftward.  Then one places a $+1$ at each outgoing point of the boundary and a $-1$ at each incoming point of the boundary, and reads along the top of the diagram  to obtain $\e$.

\begin{remark}
The number $r(\e)$ counts the turnback strands in $T_\e$.
\end{remark}
\begin{remark}\label{rmk:TL coalg}
The object $C_\e:=q^{r(\e)}T_\e\star T_\e^\vee$ comes with a canonical degree zero cobordism (sequence of $r(\e)$ saddle cobordisms)  $C_\e\rightarrow \one$ making $C_\e$ into a counital object (in fact a coalgebra object) in $\TLC_{n,n}$.
\end{remark}

\begin{definition}\label{def:Cnk}
For each pair of integers $0\leq k\leq n$ with $n-k$ even, let $C_{n,k}:=\bigoplus_{|\e|=k} C_\e$, where the sum is over admissible sequences $\e\in \{1,-1\}^n$ with $|\e|=k$.
\end{definition}

The \emph{through-degree} of a tangle $T\in \TLC_{n,m}$ is the minimal $k$ such that $T$ factors as $T \cong U\star U'$ with $U\in \TLC_{n,k}$ and $U'\in \TLC_{k,m}$.  Note for each $0\leq k\leq n$ the tangles with through degrees $\leq k$ (and direct sums of shifts thereof) form a two-sided tensor ideal in $\TLC_{n,n}$.

\begin{lemma}\label{lemma:throughdeg}
A tangle $T$ has through-degree $\leq k$ if and only if $T$ is left (equivalently right) $C_{n,k}$-projective (Definition \ref{def:Cproj}).
\end{lemma}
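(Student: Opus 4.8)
The plan is to translate the statement, via Lemma~\ref{lemma:X leq C}, into the assertion that the thick \emph{right} tensor ideal of $\TLC_{n,n}$ generated by $C_{n,k}$ is exactly the ideal of tangles of through-degree $\le k$, and then to deduce the ``equivalently right'' clause by symmetry. Concretely, by Lemma~\ref{lemma:X leq C} the tangle $T$ is left $C_{n,k}$-projective if and only if $T\le_R C_{n,k}$, i.e.\ $T$ is a retract of $C_{n,k}\star X$ for some $X\in\TLC_{n,n}$. One direction is then essentially formal: each summand $C_\e=q^{r(\e)}T_\e\star T_\e^\vee$ of $C_{n,k}$ factors through the $k$-strand object since $T_\e\in\TLC_{n,k}$ and $T_\e^\vee\in\TLC_{k,n}$, so $C_{n,k}$ has through-degree $\le k$; and the class of objects of through-degree $\le k$ is closed under $\star$ (the remark preceding the lemma), under grading shifts and direct sums (by inspection), and under retracts (here one invokes that $\TLC_{n,n}$ is graded Krull--Schmidt and that through-degree is monotone under passage to summands, a standard structural fact about the Temperley--Lieb $2$-category). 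Hence any retract of $C_{n,k}\star X$ has through-degree $\le k$, which gives ``$T$ left $C_{n,k}$-projective $\Rightarrow$ through-degree $\le k$''.

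For the converse I would argue directly with $\le_R$. Suppose $T$ is a crossingless matching of through-degree $j\le k$; write $T\cong T_\alpha\star T_\gamma^\vee$ with $T_\alpha\in\TLC_{n,j}$ and $T_\gamma^\vee\in\TLC_{j,n}$ the cup, resp.\ cap, diagrams of admissible sequences $\alpha,\gamma$ with $|\alpha|=|\gamma|=j$. The combinatorial core is the claim that $T_\alpha$ factors as $T_\alpha\cong T_\eta\star W$ with $T_\eta\in\TLC_{n,k}$ the cup diagram of some admissible $\eta$, $|\eta|=k$, and $W\in\TLC_{k,j}$ a crossingless matching: this follows by performing first a subset of $(n-k)/2$ of the $(n-j)/2$ cups of $T_\alpha$ which is downward-closed for the nesting order on its cups (such a subset of any prescribed cardinality exists, since a forest has downward-closed subsets of every size up to its total). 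Granting this, I compute, using the delooping isomorphism $T_\eta^\vee\star T_\eta\cong (q+q^{-1})^{(n-k)/2}\,\1_k$ (the $(n-k)/2$ circles produced when the cups of $T_\eta$ are joined to the caps of $T_\eta^\vee$, while the through-strands glue to $\1_k$), that with $r(\eta)=(n-k)/2$
\[
C_\eta\star\bigl(q^{-r(\eta)}\,T_\eta\star W\star T_\gamma^\vee\bigr)\ \cong\ (q+q^{-1})^{r(\eta)}\,T_\eta\star W\star T_\gamma^\vee\ \cong\ (q+q^{-1})^{r(\eta)}\,T .
\]
Thus some grading shift of $T$ is a direct summand of $C_{n,k}\star Y$ for $Y=q^{-r(\eta)}T_\eta\star W\star T_\gamma^\vee\in\TLC_{n,n}$ (project onto the $\eta$-summand of $C_{n,k}$), so $T\le_R C_{n,k}$, i.e.\ $T$ is left $C_{n,k}$-projective by Lemma~\ref{lemma:X leq C}.

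It remains to observe that left may be replaced by right. The class of tangles of through-degree $\le k$ is manifestly preserved by the duality $(-)^\vee$, and $C_{n,k}^\vee\cong C_{n,k}$ because each $C_\e$ is self-dual ($(T_\e\star T_\e^\vee)^\vee=T_\e\star T_\e^\vee$ and $r(\e)$ is duality-invariant). Since $(-)^\vee$ is a contravariant monoidal equivalence it interchanges $\le_L$ with $\le_R$, hence it interchanges ``left $C_{n,k}$-projective'' with ``right $C_{n,k}$-projective''; applying it to the equivalence just proven yields the same equivalence with ``right'' in place of ``left''.

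I expect the main obstacle to be the combinatorial factorization $T_\alpha\cong T_\eta\star W$ together with the bookkeeping of grading shifts from delooping, and, secondarily, the assertion that the through-degree-$\le k$ ideal is thick (closed under retracts): both of these rely on standard facts about crossingless matchings and the cellular structure of $\TLC_{n,n}$ rather than on anything developed in this paper, so care is needed to state precisely which external facts are being used.
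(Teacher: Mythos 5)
Your proof is correct, and it reaches the same two reductions as the paper (decompose $T$ through its through--strands as $T_\alpha\star T_\gamma^\vee$; handle left versus right by the duality $(-)^\vee$), but it executes the essential step --- promoting ``$T$ has through-degree $j<k$'' to ``$T\le_R C_{n,k}$'' --- by a genuinely different route. The paper stays inside its abstract machinery: it observes that $T$ is left $C_\e$-projective for $\e$ the top half of $T$ with $|\e|=j$, then factors the \emph{counit morphism} $C_\e\to\one$ (a sequence of saddles) through some $C_\mu$ with $|\mu|=k$, so that $C_\e\le C_\mu\le C_{n,k}$ as counital objects, and concludes by Lemma \ref{lemma:counitorder} and transitivity. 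You instead factor the \emph{object}: $T_\alpha\cong T_\eta\star W$ with $|\eta|=k$, obtained by peeling off an order ideal of $(n-k)/2$ cups for the nesting order, and then exhibit a shift of $T$ directly as a summand of $C_\eta\star Y$ via the delooping $T_\eta^\vee\star T_\eta\cong(q+q^{-1})^{r(\eta)}\1_k$. These are two incarnations of the same combinatorial fact (coarsening a cup diagram by a nesting-closed set of cups), but yours is more explicit and bypasses Lemma \ref{lemma:counitorder} entirely, at the cost of some bookkeeping: you should absorb the residual grading shift from $(q+q^{-1})^{r(\eta)}$ into $Y$ to get $T$ itself (not just a shift of it) as a retract, and reduce a general tangle to a loop-free crossingless matching at the outset, as the paper does with its ``without loss of generality.'' On the other direction you are actually more careful than the paper: the paper dismisses it as obvious from the fact that through-degree $\le k$ objects form a tensor ideal, whereas you correctly note that one also needs this ideal to be \emph{thick} (closed under retracts), which requires the Krull--Schmidt property of $\TLC_{n,n}$ --- an external structural input worth flagging, exactly as you do.
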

\begin{proof}
We only prove the statement about left $C_{n,k}$-projectives, since the statement for right $C_{n,k}$-projectives follows by symmetry.  We may assume without loss of generality that $T$ has no closed loop components.  Then $T$ can be decomposed into its ``top half'' and ``bottom half'' as $T = T_\e\circ T_\nu^\vee$ for some admissible sequences $\e,\nu\in \{1,-1\}^n$ with $|\e|=|\nu|=\text{through-degree of $T$}$.  Then $T$ is left $C_\e$ projective.

We must show that $C_\e\leq C_{n,k}$.  If $T$ has through-degree $k$, then $|\e|=k$ and $C_\e$ is a direct summand of $C_{n,k}$, so $C_\e\leq C_{n,k}$ is obvious.  Otherwise, if $|\e|=l<k$, then the counit of $C_{\e}$ (a sequence of saddle cobordisms each of which increases through-degree by 2) factors through some object $C_\mu$ with $|\mu|=k$, and $C_\e\leq C_\mu \leq C_{n,k}$, as claimed.  This shows that if the through-degree of $T$ is $\leq k$, then $T$ is left $C_{n,k}$-projective.  The converse is obvious, since each summand of $C_{n,k}$ has through-degree $k$.

\end{proof}


\begin{definition}\label{def:Pnk}
For each pair of integers $0\leq k\leq n$ with $n-k$ even, let $\PB_{n,k}:=\PB_{C_{n,k}}$ and $\EB_{n,k}:=\PB_{n,k}/\PB_{n,k-2}$ be the relative idempotent.  If $k-2<0$, then we set $\PB_{n,k}=0$ by convention, so that $\EB_{n,1}=\PB_{n,1}$ if $n$ is odd and $\EB_{n,0}=\PB_{n,0}$ if $n$ is even.
\end{definition}
\begin{remark}
$\PB_{n,n}\simeq\one$, so $\EB_{n,n} \simeq \one / \PB_{C_{n,n-2}} = \AB_{C_{n,n-2}}$.
\end{remark}

\begin{theorem}\label{thm:TL P and A}
The complexes $\EB_{n,k}$ from Definition \ref{def:Pnk} satisfy:
\begin{enumerate}
\item $\EB_{n,k}$ is a complex constructed from tangles with through-degree $\leq k$.
\item $X\star \EB_{n,k}\simeq 0 \simeq \EB_{n,k}\star X$ for any tangle $X$ with through-degree $<k$.
\item there is a homotopy equivalence $\one \ \ \simeq \ \ \tw_\a\left(\bigoplus_k \EB_{n,k}\right)$  where $\a$ is a twist whose component $\a_{l,k}\in \Hom^1_{\Ch(\TLC_{n,n})}(\EB_{n,k},\EB_{n,l})$ vanishes unless $k>l$.
\end{enumerate}
\end{theorem}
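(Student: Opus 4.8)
The plan is to get parts (1) and (2) quickly out of the machinery already assembled and to concentrate the real work on part (3). A preliminary point needed throughout: $C_{n,k-2}\leq C_{n,k}$ as counital objects. Indeed every summand $C_\e$ of $C_{n,k-2}$ is a tangle of through-degree $k-2\leq k$, hence left $C_{n,k}$-projective by Lemma \ref{lemma:throughdeg}, hence $C_{n,k-2}\leq_L C_{n,k}$ by Lemma \ref{lemma:X leq C}, hence $C_{n,k-2}\leq C_{n,k}$ by Lemma \ref{lemma:counitorder}. Consequently $\EB_{n,k}=\PB_{n,k}/\PB_{n,k-2}$ is defined and Theorem \ref{thm:rel idemp thm} furnishes both the decomposition $\PB_{n,k}\simeq(\EB_{n,k}\rightarrow\PB_{n,k-2})$ and the vanishing $\EB_{n,k}\star C_{n,k-2}\simeq 0\simeq C_{n,k-2}\star\EB_{n,k}$.

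For (1): for every $j\leq k$ the object $C_{n,j}$ is built from tangles of through-degree $\leq j\leq k$, and the through-degree of a tensor product of tangles is at most the through-degree of either factor, so every chain object of $\PB_{n,j}$ (a direct sum of shifts of tensor powers of $C_{n,j}$) is a sum of tangles of through-degree $\leq k$. Hence the mapping cone $\EB_{n,k}\simeq\Cone(\PB_{n,k-2}\rightarrow\PB_{n,k})$ is represented by a complex of tangles of through-degree $\leq k$. For (2): a tangle $X$ of through-degree $<k$ has through-degree $\leq k-2$, hence is left and right $C_{n,k-2}$-projective by Lemma \ref{lemma:throughdeg}, so is a retract of $C_{n,k-2}\star X$ and of $X\star C_{n,k-2}$; then $\EB_{n,k}\star X$ is a retract of $\EB_{n,k}\star C_{n,k-2}\star X\simeq 0$, and $X\star\EB_{n,k}$ is a retract of $X\star C_{n,k-2}\star\EB_{n,k}\simeq 0$.

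For (3) I would induct on $k$ along the chain $C_{n,k_0}\leq C_{n,k_0+2}\leq\cdots\leq C_{n,n}$, where $k_0\in\{0,1\}$ has the parity of $n$, proving that $\PB_{n,k}\simeq\tw_{\a^{(k)}}\bigl(\bigoplus_{j}\EB_{n,j}\bigr)$, the sum over $j$ of the parity of $k$ with $k_0\leq j\leq k$, where $\a^{(k)}$ is a degree-one morphism whose component $\EB_{n,j}\rightarrow\EB_{n,l}$ vanishes unless $j>l$. The base case $k=k_0$ is simply $\PB_{n,k_0}=\EB_{n,k_0}$, since $\PB_{n,k_0-2}=0$. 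For the inductive step I would substitute the inductive description of $\PB_{n,k-2}$ into the two-term convolution $\PB_{n,k}\simeq(\EB_{n,k}\rightarrow\PB_{n,k-2})$: replacing the summand $\PB_{n,k-2}$ by the homotopy equivalent $\tw_{\a^{(k-2)}}(\bigoplus_{j\leq k-2}\EB_{n,j})$ and transporting the connecting map accordingly produces a twisted complex on $\bigoplus_{j\leq k}\EB_{n,j}$ whose twist restricts to $\a^{(k-2)}$ on the lower block and whose remaining components map $\EB_{n,k}$ into the $\EB_{n,l}$ with $l\leq k-2<k$, exactly the triangular shape required. Since the only admissible sequence of weight $n$ is the constant one, for which the cup tangle is the identity, we have $C_{n,n}=\one$ and therefore $\PB_{n,n}\simeq\one$; so the case $k=n$ of the claim is precisely part (3), with $\a=\a^{(n)}$.

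The hard part will be the homological bookkeeping in the inductive step: one must check that substituting a homotopy equivalent complex for the term $\PB_{n,k-2}$ inside the one-sided twisted complex $(\EB_{n,k}\rightarrow\PB_{n,k-2})$ again yields a homotopy equivalent twisted complex, and that the transported connecting map contributes only components $\EB_{n,k}\rightarrow\EB_{n,l}$ with $l<k$ (and no self-component on $\EB_{n,k}$). These are routine manipulations with one-sided twisted complexes and homological perturbation, of the same flavor as those used in the proof of Lemma \ref{lemma:Cannihilation}, but they must be carried out with care; everything else in the theorem reduces mechanically to Lemma \ref{lemma:throughdeg}, Theorem \ref{thmA}, and Theorem \ref{thm:rel idemp thm}.
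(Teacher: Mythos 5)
Your proposal is correct and follows essentially the same route as the paper: both rest on the relative-idempotent decomposition $\PB_{n,k}\simeq(\EB_{n,k}\rightarrow\PB_{n,k-2})$ from Theorem \ref{thm:rel idemp thm} together with $\PB_{n,n}\simeq\one$, the only difference being that the paper assembles the full telescoping twisted complex over all $k$ at once and reassociates, whereas you unwind the same telescope by induction on $k$. Your verifications of (1) and (2) via Lemma \ref{lemma:throughdeg} and the parity observation are exactly the details the paper leaves as ``easily verified.''
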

\begin{proof}
We have a homotopy equivalence of the form
\[
\one \ \ \simeq \ \ \PB_{n,n} \ \ \simeq \ \ \left(
\begin{tikzpicture}[baseline=3em]
\tikzstyle{every node}=[font=\small]
\node (a1) at (0,2) {$\PB_{n,n-2}[1]$};
\node (a0) at (0,0) {$\PB_{n,n}$};
\node (b1) at (2,2) {$\PB_{n,n-4}[1]$};
\node (b0) at (2,0) {$\PB_{n,n-2}$};
\node (c1) at (4,2) {$\ \ \cdots\cdots \ \ $};
\node (c0) at (4,0) {$\ \ \cdots\cdots \ \ $};
\node (e1) at (6,2) {$\PB_{n,0\text{ or }1}[1]$};
\node (e0) at (6,0) {$\PB_{n,2\text{ or }3}$};
\node (f) at (8,0) {$\PB_{n,0\text{ or }1}$};
\path[->,>=stealth',shorten >=1pt,auto,node distance=1.8cm]
(a1) edge node {} (a0)
(a1) edge node[above=.4em] {$-\Id$} (b0)
(b1) edge node {} (b0)
(b1) edge node[above=.4em] {$-\Id$} (c0)
(c1) edge node[above=.4em] {$-\Id$} (e0)
(e1) edge node {} (e0)
(e1) edge node[above=.4em] {$-\Id$} (f);
\end{tikzpicture}
\right)
\]
in which the right-hand side is a twisted complex of the form $\tw_\a(\bigoplus_k \PB_{n,k}\oplus \bigoplus_{k\neq n}\PB_{n,k}[1])$.  The vertical arrows are the maps which are guaranteed by $\PB_{n,k-2}\leq \PB_{n,k}$, the cones of which are $\EB_{n,k}$.  After reassociating we obtain a homotopy equivalence
\[
\one \ \ \simeq \ \ \left(
\begin{tikzpicture}[baseline=0em]
\tikzstyle{every node}=[font=\small]
\node (a0) at (0,0) {$\EB_{n,n}$};
\node (b0) at (2,0) {$\EB_{n,n-2}$};
\node (c0) at (4.5,0) {$\ \ \cdots\cdots \ \ $};
\node (e0) at (7,0) {$\EB_{n,0\text{ or }1}$};
\path[->,>=stealth',shorten >=1pt,auto,node distance=1.8cm]
(a0) edge node {} (b0)
(b0) edge node {} (c0)
(c0) edge node {} (e0);
\end{tikzpicture}
\right),
\]
which is a twisted complex as in (3).  Properties (1) and (2) of $\EB_{n,k}$ are easily verified.
\end{proof}

\begin{remark}
The complexes $\EB_{n,k}$ are homotopy equivalent to the complexes denoted $P_{n,k}^\vee$ in the notation of \cite{CH12} (where $(-)^\vee$ denotes the duality functor; this appears because of the preference in \emph{loc.~cit.} for complexes which live in $\Ch^+(\TLC_{n,n})$).  We use the letter ``$E$'' for these complexes because we prefer to reserve the letter `$P$' for idempotent coalgebras.
\end{remark}

\begin{remark}
The paper \cite{CH12} constructs a finer collection of mutually orthogonal idempotent complexes in $\Ch^-(\TLC_{n,n})$ which we denote here by $\EB_\e$, indexed by admissible sequences $\e\in \{1,-1\}^n$.  These can be obtained in a similar fashion as $\PB_{\unlhd\e}/\PB_{\lhd\e}$ where $C_{\unlhd \e}= \bigoplus_{\nu\unlhd \e} C_\nu$ and $\PB_{\unlhd \e}=\PB_{C_{\unlhd \e}}$, and similarly for $\PB_{\lhd \e}$.  We leave the details to the reader.
\end{remark}

\begin{remark}
It is possible to construct idempotent complexes of Soergel bimodules, for instance those constructed using categorical diagonalization \cite{ElHogCatDiag-pp,ElHogFTdiag-pp} using the techniques of this paper; we plan to address this in future work.
\end{remark}

\printbibliography

\end{document}